\providecommand{\U}[1]{\protect\rule{.1in}{.1in}}
\newtheorem{theorem}{Theorem}
\newtheorem{corollary}[theorem]{Corollary}
\newtheorem{definition}[theorem]{Definition}
\newtheorem{example}[theorem]{Example}
\newtheorem{proposition}[theorem]{Proposition}
\newtheorem{remark}[theorem]{Remark}
\newcommand{\K}{\mathbb K}
\newcommand{\g}{\mathfrak{g}}
\newcommand{\sll}{\mathfrak{sl}_2(\mathbb{K})}
\begin{document}

\title[ On classification of Filiform Hom-Lie algebras]{  On classification of Filiform Hom-Lie algebras}

\author{ Abdenacer Makhlouf and Mourad Mehidi }%

\address{ Universit\'{e} de Haute Alsace,  Laboratoire de Math\'{e}matiques, Informatique et Applications,
4, rue des Fr\`{e}res Lumi\`{e}re F-68093 Mulhouse, France}%
\email{Abdenacer.Makhlouf@uha.fr, Mourad.Mehidi@uha.fr}

\maketitle
\begin{abstract}
The purpose of this paper is to introduce and study nilpotent and filiform Hom-Lie algebras. Moreover, we extend  Vergne and Khakimdjanov's approach to Hom-type algebras  and provide a classification of filiform Hom-Lie algebras of
dimension $n,n\leq7$.
\end{abstract}

\section*{Introduction}

Hom-type algebra structures
appeared  first in quasi-deformation of Lie algebras of vector fields.
Discrete modifications of vector fields via twisted derivations lead
to Hom-Lie and quasi-Hom-Lie structures in which the Jacobi
condition is twisted. The first examples of $q$-deformations, in which the derivations are replaced by $\sigma$-derivations,
concerned the Witt and Virasoro algebras, see for example
\cite{AizawaSaito}. A  general study and
construction of Hom-Lie algebras  and a more general framework bordering color and
Lie superalgebras were considered
 in \cite{HLS,LS}. In the subclass of Hom-Lie
algebras skew-symmetry is untwisted, whereas the Jacobi identity is
twisted by a single linear map, reducing to ordinary Lie algebras when the twisting linear
map is the identity map.
The notion of Hom-associative algebras generalizing associative
algebras to a situation where associativity bracket  is twisted by a
linear map was introduced  in \cite{MS}. It turns out  that the
commutator bracket multiplication defined using the multiplication
of a Hom-associative algebra leads naturally to a Hom-Lie algebra.
The enveloping
algebras of Hom-Lie algebras were discussed in \cite{Yau:EnvLieAlg} and the graded case in \cite{AM10}.
The fundamentals of the formal deformation theory and associated
cohomology structures for Hom-Lie algebras have been considered
initially  in \cite{AM 2007} and completed in \cite{AEM11} and simultaneously in \cite{Sheng}. 

Nilpotent Lie algebras play an important role in the study  of Lie algebras structure. Classification of nilpotent Lie algebras are of great interest since Umlauf's thesis in 1891, where he classified filiform Lie algebras up to dimension 7. It turns out that this classification included some mistakes and was  incomplete. Since then it was a big challenge to classify nilpotent Lie algebras or the special class of filiform Lie algebras. 
Graded nilpotent Lie algebras were classified by Vergne \cite{vergne}. She also showed that there exists a well-adapted cohomology permitting to extend the classical results of Gerstenhaber-Nijenhuis-Richardson concerning deformations to the scheme $\mathcal{N}^n$ defined by Jacobi's identities and nilpotent relations. She studied the irreducible components meeting the open set of filiform brackets.
Then, in \cite{Hakim1,Hakim2}  Khakimdjanov  described all irreducible components of $\mathcal{N}^n$.  Moreover,    this approach based on   deformations of  the Lie algebra $L_n$ into a filiform Lie algebra with appropriate cocycles, was used to provide explicit  classifications of filiform Lie algebras, see for example   \cite{4,7,8,9}. See also for classification problem  \cite{1,2,3,5,12,13,16}.

In this paper, we aim to study nilpotent Hom-Lie algebras and, in particular,  the subclass of filiform Hom-Lie algebras. We introduce the definitions and discuss their properties. Moreover, following the Vergne-Khakimdjanov's  approach, we provide the classification of filiform Hom-Lie algebras up to dimension 7. In the first section, we review the basics on Hom-Lie algebras and cohomology. In Section 2, we introduce the definitions and some properties of solvable, nilpotent and filiform Hom-Lie algebras. Section 3, extends the Yau twisting  to nilpotent Hom-Lie algebras 
along algebra morphisms and provides various examples. From Section 4, we deal with classification of filiform Hom-Lie algebras. We extend Vergne-Khakimdjanov approach to Hom-type algebras, using infinitesimal deformations of the model filiform Hom-Lie algebra. We discuss the adapted basis changes that lead to a classification. Classification of $m$-dimensional filiform Hom-Lie algebras for $m\leq 5$  is given in Section 5 and  dimension 6 (resp. dimension 7)  is provided in Section 6 (resp. Section 7). In the last section, we come out with the classification of multiplicative filiform Hom-Lie algebras. 
\section{Generalities on Hom-Lie algebras}

In this section, we review the basics on Hom-Lie algebras and  the Hom-Type Chevalley-Eilenberg cohomology.  In the sequel, all vector spaces and algebras are considered over  $\mathbb{K}$,  an algebraically  closed field of characteristic 0, even if many results could be stated in a more general situation. 

\subsection{Definitions and properties}
\begin{definition} \label{def:HomLie}
A \emph{Hom-Lie algebra} is a triple $(\mathfrak{g}, [\cdot ,
\cdot], \alpha)$ consisting of a vector space $\mathfrak{g}$ on which  $ [\cdot ,
\cdot],: \mathfrak{g}\times \mathfrak{g} \rightarrow \mathfrak{g}$ is
a bilinear map and $\alpha: \mathfrak{g} \rightarrow \mathfrak{g}$
 a linear map
 satisfying
\begin{eqnarray} & [x,y]=-[y,x],
\quad {\text{(skew-symmetry)}} \\ \label{HomJacobiCondition} &
\circlearrowleft_{x,y,z}{[\alpha(x),[y,z]]}=0 \quad
{\text{(Hom-Jacobi identity)}}
\end{eqnarray}
for all $x, y, z$ in $\mathfrak{g}$ and  where $\circlearrowleft_{x,y,z}$ denotes
summation over the cyclic permutation on $x,y,z$.

The Hom-Lie algebra is said to be multiplicative if, in addition,  we have
$$\alpha ([x,y])=[\alpha(x),\alpha (y)]  \text { for all } x,y\in \mathfrak{g}.$$

A Hom-Lie algebra $(\mathfrak{g}, [\cdot ,
\cdot], \alpha)$ is said of Lie  type if there exist a Lie bracket $ [\cdot ,
\cdot]'$ on $\mathfrak{g}$ such that $ [\cdot ,
\cdot]= \alpha \circ  [\cdot ,
\cdot]'$.
\end{definition}
The linear map $\alpha$ is called twist map or structure map.  We recover classical Lie algebra when $\alpha =id_\mathfrak{g}$ and the identity \eqref{HomJacobiCondition} is the Jacobi identity in this case.\\ 

\begin{definition}
Let  $\left( \mathfrak{g},[\cdot,\cdot ],\alpha \right) $ be a Hom-Lie algebra. \\
$\bullet $ A subspace  $\mathfrak{h}$ of $\mathfrak{g}$ is said to be a \emph{Hom-Lie subalgebra} if for $x\in \mathfrak{h}$ and $y\in \mathfrak{g}$,  we have $[x,y]\in \mathfrak{h}$ and $\alpha (x)\in \mathfrak{h}.$

\noindent $\bullet$ A subspace  $I$ of $\mathfrak{g}$ is said to be an \emph{ideal} if for $x\in I$ and $y\in \mathfrak{g}$, we have $[x,y]\in I$ and $\alpha (x)\in I$, that is $\left[  \mathfrak{g},I\right]  \subset I\ $and $\alpha \left(  I\right)  \subset
I$.

\noindent $\bullet$ A Hom-Lie algebra in which the commutator is not identically zero and which has no proper ideals is called \emph{simple}.

\end{definition}

Let $\left( \mathfrak{g}, [\cdot ,
\cdot],,\alpha \right) $ and
$\mathfrak{g}^{\prime }=\left(\mathfrak{g}^{\prime }, [\cdot ,
\cdot]^{\prime
},\alpha^{\prime }\right) $ be two Hom-Lie algebras. \\ A linear map
$f :\mathfrak{g}\rightarrow \mathfrak{g}^{\prime }$ is
a \emph{ Hom-Lie algebras morphism} if%
$$
 [\cdot ,
\cdot] ^{\prime }\circ (f\otimes f)=f\circ [\cdot ,
\cdot] \quad \text{
and } \qquad f\circ \alpha=\alpha^{\prime }\circ f.
$$
It is called a \emph{weak morphism}  if  only the first condition holds.\\
In particular, Hom-Lie algebras $\left(\mathfrak{g}, [\cdot ,
\cdot],\alpha \right) $ and
$\left(\mathfrak{g},[\cdot ,
\cdot]^{\prime },\alpha^{\prime }\right) $ are isomorphic if
there exists a
bijective linear map $f\ $such that%
$$
[\cdot ,
\cdot] =f^{-1}\circ[\cdot ,
\cdot] ^{\prime }\circ (f\otimes f)\qquad
\text{ and }\qquad \alpha= f^{-1}\circ \alpha^{\prime }\circ
f.
$$

\begin{example}[Hom-Type $\sll$] The triple 
$(\sll ,[\cdot,\cdot],\alpha)$ defines a $3$-dimensional  Hom-Lie algebra generated by  $$ H=\left(
\begin{array}{cc}
1 & 0 \\
0 & -1 \\
\end{array}
\right),
\\E=\left(
  \begin{array}{cc}
  0 & 0 \\
    1 & 0 \\
     \end{array}
      \right)
      ,F=\left(
      \begin{array}{cc}
       0 & 1 \\
          0 & 0 \\
           \end{array}
           \right),
$$
with $[A,B]=AB-BA$, that is 
$$[H,E]=-2 E,\ [H,F]=2 F,\  [E,F]=-H,
$$
 and where the twist maps are given with respect to the basis by the matrices
$$
\mathcal{M}_\alpha=\left(
\begin{array}{ccc}
a & c & d \\
2d & b & e \\
2c & f & b \\
\end{array}
\right),\; \hbox{where}\; a,b,c,d,e,f \in \mathbb{K}.$$

 Now, if we consider the $q$-deformation of $\sll$, that is viewed in terms differential operators and where the usual derivation is replaced by a $\sigma$-derivation (Jackson derivative), we obtain the following Hom-Lie algebra
defined with respect to a basis $\{x_{1},x_{2},x_{3}\}$ by 
\begin{align*}
& [x_{1},x_{2}]=-2qx_{2}, &  \alpha
(x_{1})=qx_{1},\\
&[x_{1},x_{3}]=2x_{3}, &  \alpha
(x_{2})=q^{2}x_{2},\\
& [x_{2},x_{3}]=-\frac{1}{2}(1+q)x_{1}, &  \alpha
(x_{3})=qx_{3},
\end{align*}
where $q$ is a parameter in $\mathbb{K}$.  If $q=1$,  we recover the classical
$\sll$.

\end{example}
\begin{example}\label{Example1HLie}
Let $\{x_1,x_2,x_3\}$  be a basis of a $3$-dimensional vector space
$\mathfrak{g}$ over $\K$. The following bracket and   linear map $\alpha$ on
$\g=\K^3$ define a Hom-Lie algebra over $\K${\rm :}
$$
\begin{array}{cc}
\begin{array}{ccc}
 [ x_1, x_2 ] &= &a x_1 +b x_3 \\ {}
 [x_1, x_3 ]&=& c x_2  \\ {}
 [ x_2,x_3 ] & = & d x_1+2 a x_3,
 \end{array}
 & \quad
  \begin{array}{ccc}
  \alpha (x_1)&=&x_1 \\
 \alpha (x_2)&=&2 x_2 \\
   \alpha (x_3)&=&2 x_3
  \end{array}
\end{array}
$$
with $[ x_2, x_1 ]$, $[x_3, x_1 ]$ and  $[
x_3,x_2 ]$ defined via skewsymmetry. It is not a
Lie algebra if and only if $a\neq0$ and $c\neq0$,
since
$[x_1,[x_2,x_3]]+[x_3,[x_1,x_2]]
+[x_2,[x_3,x_1]]= a c x_2.$
\end{example}
\begin{example}
[Twisted Heisenberg]\label{TwistedHeisenberg}
The usual Heisenberg  Lie algebra is twisted to  Hom-Lie algebras $(\mathfrak{g},[\cdot ,\cdot ],\alpha) $ defined, with respect to a basis  $\{x_1,x_2,x_3\}$, by the bracket 
$$[x_1,x_2]=(a_{11}a_{22}-a_{12}a_{21})x_3$$
and  linear maps  $\alpha $ are given with respect to the basis by the matrices
$$
\left(
\begin{array}{ccc}
a_{11} & a_{12} & 0 \\
a_{21}& a_{22}& 0 \\
a_{31} & a_{32}& a_{11}a_{22}-a_{12}a_{21}\\
\end{array}
\right),\;
$$  where $a_{11},a_{12},a_{21},a_{22} , a_{31},a_{32}\in \mathbb{K}.$
\end{example}

\begin{proposition}
\cite{Sheng} Given two Hom-Lie algebras $(\mathfrak{g},[\cdot ,\cdot ],\alpha) $
and $(\mathfrak{H},[\cdot ,\cdot],\beta)$, there is a Hom-Lie algebra
$(\mathfrak{g\oplus H},[\cdot ,\cdot ],\alpha+\beta),$ where the skew-symmetric bilinear
map
\[
\lbrack\cdot ,\cdot ]:\mathfrak{g\oplus H\times g\oplus H}\rightarrow\mathfrak{g\oplus
H}%
\]
is given by%
\[
\lbrack(x_{1},y_{1}),(x_{2},y_{2})]=([x_{1},x_{2}],[y_{1},y_{2}]),\forall
x_{1},x_{2}\mathfrak{\in g},y_{1},y_{2}\mathfrak{\in H},
\]
and the linear map
\[
\mathfrak{(\alpha+\beta):g\oplus H\rightarrow g\oplus H}%
\]
is given by
\[
\mathfrak{(\alpha+\beta)}(x,y)=(\mathfrak{\alpha}(x),\mathfrak{\beta
}(y)),\forall x\mathfrak{\in g},y\mathfrak{\in H}.
\]
\end{proposition}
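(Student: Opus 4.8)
The plan is to verify directly that the triple $(\mathfrak{g}\oplus\mathfrak{H},[\cdot,\cdot],\alpha+\beta)$ satisfies the two axioms of Definition \ref{def:HomLie}, the point being that every structure involved is defined coordinatewise. First I would record that the bracket on $\mathfrak{g}\oplus\mathfrak{H}$ is bilinear and that $\alpha+\beta$ is linear, which is immediate from the corresponding properties on each factor. Skew-symmetry is just as immediate: for $(x_1,y_1),(x_2,y_2)\in\mathfrak{g}\oplus\mathfrak{H}$,
\[
[(x_1,y_1),(x_2,y_2)]=([x_1,x_2],[y_1,y_2])=(-[x_2,x_1],-[y_2,y_1])=-[(x_2,y_2),(x_1,y_1)],
\]
using skew-symmetry in $\mathfrak{g}$ and in $\mathfrak{H}$.

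The one statement with content is the Hom-Jacobi identity \eqref{HomJacobiCondition}. Here I would fix $X_i=(x_i,y_i)$ for $i=1,2,3$ and expand, using the coordinatewise definitions of the bracket and of $\alpha+\beta$,
\[
[(\alpha+\beta)(X_1),[X_2,X_3]]=\big([\alpha(x_1),[x_2,x_3]],\,[\beta(y_1),[y_2,y_3]]\big),
\]
and likewise for the two remaining cyclic terms. Adding the three contributions coordinatewise, the left-hand side of \eqref{HomJacobiCondition} for $\mathfrak{g}\oplus\mathfrak{H}$ becomes
\[
\Big(\circlearrowleft_{x_1,x_2,x_3}[\alpha(x_1),[x_2,x_3]],\ \circlearrowleft_{y_1,y_2,y_3}[\beta(y_1),[y_2,y_3]]\Big),
\]
and both coordinates vanish because $(\mathfrak{g},[\cdot,\cdot],\alpha)$ and $(\mathfrak{H},[\cdot,\cdot],\beta)$ are themselves Hom-Lie algebras. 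Hence the cyclic sum equals $(0,0)$, which is exactly \eqref{HomJacobiCondition} for the direct sum, and the proposition follows.

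I do not expect a genuine obstacle here: this is a routine coordinatewise check, and the only thing to be a little careful about is bookkeeping — keeping the cyclic-sum index running in the correct coordinate when expanding $\circlearrowleft_{X_1,X_2,X_3}$, and noting that addition in the target space $\mathfrak{g}\oplus\mathfrak{H}$ is performed componentwise so that the cyclic sum really does split into its two factors. If one also wants the multiplicative version, the same splitting applied to $\alpha([x_1,x_2])=[\alpha(x_1),\alpha(x_2)]$ and $\beta([y_1,y_2])=[\beta(y_1),\beta(y_2)]$ shows that $\alpha+\beta$ is multiplicative as soon as $\alpha$ and $\beta$ are.
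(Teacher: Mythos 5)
Your verification is correct, and it is the standard coordinatewise argument: the paper itself gives no proof of this proposition, simply citing Sheng, and the expected justification is exactly the splitting of skew-symmetry and the Hom-Jacobi identity into the two factors that you carry out. The only point worth flagging is that your closing remark about multiplicativity is an extra (true) observation not part of the stated proposition.
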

Now, we give a characterization of Hom-Lie algebras morphism.
Let  $(\mathfrak{g,[\cdot,\cdot],\alpha})$ and
$(\mathfrak{H,[\cdot,\cdot ],\beta})$ be two Hom-Lie algebras and   $\phi:\mathfrak{g\rightarrow H} $ be a Hom-Lie algebras morphism.
Denote by $\mathfrak{g}_{\phi}\subset\mathfrak{g\oplus H}$ the graph of a
linear map $\phi:\mathfrak{g\rightarrow H}.$

\begin{proposition}[\cite{Sheng}] A linear map $\phi:\mathfrak{g\rightarrow H} $ is a Hom-Lie algebra morphism if and only if the graph $\mathfrak{g}_{\phi}\subset\mathfrak{g\oplus H}$ is a Hom-Lie subalgebra of $(\mathfrak{g\oplus H},[\cdot ,\cdot ],\alpha+\beta)$.
\end{proposition}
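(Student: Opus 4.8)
The plan is to prove the equivalence by directly unwinding the definitions of "Hom-Lie subalgebra" and "Hom-Lie morphism" on the graph $\mathfrak{g}_\phi = \{(x,\phi(x)) : x \in \mathfrak{g}\} \subset \mathfrak{g}\oplus\mathfrak{H}$, using the fact that $(\mathfrak{g}\oplus\mathfrak{H},[\cdot,\cdot],\alpha+\beta)$ is itself a Hom-Lie algebra by the previous proposition. Recall that a subspace $S$ of a Hom-Lie algebra $(\mathfrak{k},[\cdot,\cdot],\gamma)$ is a Hom-Lie subalgebra precisely when $[S,S]\subset S$ and $\gamma(S)\subset S$ (the definition quoted in the excerpt allows the second argument of the bracket to range over all of $\mathfrak{k}$, but for the graph it suffices — and is the intended reading — to check closure under the bracket of two elements of $S$).

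First I would address the bracket condition. For $x_1,x_2\in\mathfrak{g}$, the definition of the direct-sum bracket gives
\begin{equation*}
[(x_1,\phi(x_1)),(x_2,\phi(x_2))] = ([x_1,x_2],[\phi(x_1),\phi(x_2)]).
\end{equation*}
This element lies in $\mathfrak{g}_\phi$ if and only if its second component equals $\phi$ applied to its first component, i.e. if and only if $[\phi(x_1),\phi(x_2)] = \phi([x_1,x_2])$. Hence $[\mathfrak{g}_\phi,\mathfrak{g}_\phi]\subset\mathfrak{g}_\phi$ is equivalent to the multiplicativity condition $[\cdot,\cdot]'\circ(\phi\otimes\phi) = \phi\circ[\cdot,\cdot]$ in the definition of a morphism.

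Next I would treat the twist-map condition. For $x\in\mathfrak{g}$,
\begin{equation*}
(\alpha+\beta)(x,\phi(x)) = (\alpha(x),\beta(\phi(x))),
\end{equation*}
and this lies in $\mathfrak{g}_\phi$ if and only if $\beta(\phi(x)) = \phi(\alpha(x))$, i.e. if and only if $\phi\circ\alpha = \beta\circ\phi$. So $(\alpha+\beta)(\mathfrak{g}_\phi)\subset\mathfrak{g}_\phi$ is equivalent to the compatibility of $\phi$ with the structure maps. Combining the two paragraphs: $\mathfrak{g}_\phi$ is a Hom-Lie subalgebra iff both $[\phi(x_1),\phi(x_2)]=\phi([x_1,x_2])$ and $\phi\circ\alpha=\beta\circ\phi$ hold for all $x_1,x_2,x\in\mathfrak{g}$, which is exactly the statement that $\phi$ is a Hom-Lie algebra morphism.

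There is no serious obstacle here — this is a routine verification typical of "graph of a morphism" characterizations. The only point requiring a little care is the bookkeeping in the "only if" direction: one must note that the first components of $[(x_1,\phi(x_1)),(x_2,\phi(x_2))]$ range over all of $\mathfrak{g}$ (since $[x_1,x_2]$ can be arbitrary as $x_1,x_2$ vary — or at least over enough of $\mathfrak{g}$ that the identity of the two components forces $\phi$-compatibility on the relevant elements), and similarly that $\alpha(x)$ ranges suitably, so that membership in the graph genuinely forces the defining equations of a morphism rather than some weaker relation. I would also remark that if one only assumes $\mathfrak{g}_\phi$ is closed under the bracket (but not under $\alpha+\beta$), the same computation shows $\phi$ is a weak morphism, which ties in with the terminology introduced just before the proposition.
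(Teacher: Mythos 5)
Your proposal is correct. The paper itself gives no proof of this proposition --- it is stated with only a citation to Sheng's article --- and your argument is exactly the standard one: an element $(a,b)$ of $\mathfrak{g}\oplus\mathfrak{H}$ lies in the graph iff $b=\phi(a)$, so closure of $\mathfrak{g}_{\phi}$ under the direct-sum bracket is literally the identity $[\phi(x_1),\phi(x_2)]=\phi([x_1,x_2])$ and closure under $\alpha+\beta$ is literally $\beta\circ\phi=\phi\circ\alpha$. Two small remarks: the ``point requiring a little care'' you flag in the only-if direction is actually vacuous, since membership of $([x_1,x_2],[\phi(x_1),\phi(x_2)])$ in the graph already asserts the desired equation for that particular pair $(x_1,x_2)$ and no range condition on $[x_1,x_2]$ or $\alpha(x)$ is needed; and you are right to read ``Hom-Lie subalgebra'' as closure under brackets of two elements of the subspace rather than the paper's literal (ideal-like) wording, without which the statement would be false.
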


In the following we show that the center of a Hom-Lie algebra is an ideal for surjective maps.

\begin{definition}
Let $(\mathfrak{g},[\cdot ,\cdot],\alpha)$ be a multiplicative Hom-Lie algebra. Define the center of $\mathfrak{g}$, denoted by $\mathcal{Z}(\mathfrak{g})$, as
\[\mathcal{Z}(\mathfrak{g}) = \{z \in \mathfrak{g} : [x,z]=0, ~\forall x \in \mathfrak{g}\}.\]
\end{definition}

\begin{proposition}
Let $(\mathfrak{g},[\cdot ,  \cdot],\alpha)$ be a multiplicative Hom-Lie algebra. If $\alpha$ is surjective, then the center of $\mathfrak{g}$ is an ideal of $\mathfrak{g}$.
\end{proposition}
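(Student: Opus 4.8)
The plan is to verify directly the two defining conditions of an ideal from the Definition above, namely that $[\mathfrak{g},\mathcal{Z}(\mathfrak{g})]\subset\mathcal{Z}(\mathfrak{g})$ and that $\alpha(\mathcal{Z}(\mathfrak{g}))\subset\mathcal{Z}(\mathfrak{g})$, after first noting that $\mathcal{Z}(\mathfrak{g})$ is a linear subspace of $\mathfrak{g}$, which is immediate from the bilinearity of the bracket.

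The first inclusion is automatic and carries no content: if $z\in\mathcal{Z}(\mathfrak{g})$, then by the very definition of the center $[x,z]=0$ for all $x\in\mathfrak{g}$, so $[\mathfrak{g},\mathcal{Z}(\mathfrak{g})]=\{0\}\subset\mathcal{Z}(\mathfrak{g})$.

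The only step with real content is the $\alpha$-invariance, and this is precisely where the surjectivity hypothesis enters. Let $z\in\mathcal{Z}(\mathfrak{g})$ and let $x\in\mathfrak{g}$ be arbitrary. Since $\alpha$ is surjective, choose $y\in\mathfrak{g}$ with $\alpha(y)=x$. Using that $(\mathfrak{g},[\cdot,\cdot],\alpha)$ is multiplicative, i.e. $\alpha([y,z])=[\alpha(y),\alpha(z)]$, together with $[y,z]=0$, one obtains
\[
[x,\alpha(z)]=[\alpha(y),\alpha(z)]=\alpha([y,z])=\alpha(0)=0 .
\]
As $x$ was arbitrary, $\alpha(z)\in\mathcal{Z}(\mathfrak{g})$, hence $\alpha(\mathcal{Z}(\mathfrak{g}))\subset\mathcal{Z}(\mathfrak{g})$. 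Combining the two inclusions with the fact that $\mathcal{Z}(\mathfrak{g})$ is a subspace shows that $\mathcal{Z}(\mathfrak{g})$ is an ideal of $\mathfrak{g}$.

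I do not expect a genuine obstacle here; the proof is essentially a one-line application of multiplicativity. The subtle point worth emphasizing is the role of surjectivity: without it one cannot write an arbitrary $x\in\mathfrak{g}$ in the form $\alpha(y)$, so the computation establishing $\alpha$-invariance breaks down, and in fact the conclusion may fail for non-surjective $\alpha$. Thus the hypothesis is used in an essential way, and I would state this remark alongside the proof.
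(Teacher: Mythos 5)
Your proof is correct and follows exactly the same route as the paper's: the bracket condition is trivial since $[\mathfrak{g},\mathcal{Z}(\mathfrak{g})]=\{0\}$, and the $\alpha$-invariance is obtained by writing an arbitrary element as $\alpha(y)$ via surjectivity and applying multiplicativity. Your added remark on where surjectivity is essential is a nice touch but does not change the argument.
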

\begin{proof}
Let $z \in \mathcal{Z}(\mathfrak{g})$ and $x \in \mathfrak{g}$, we set $x = \alpha(u)$, then we have
\begin{align*}
[x,\alpha(z)]= [\alpha(u),\alpha(z)]
= \alpha([u,z])
=0,
\end{align*}
that is $\alpha(\mathcal{Z}(\mathfrak{g})) \subseteq \mathcal{Z}(\mathfrak{g})$.

Let $z \in \mathcal{Z}(\mathfrak{g})$ and $x,y\in \mathfrak{g}$,
 $
[x,[y,z]]= [x,0]
=0.
$ \\ 
Therefore, $\mathcal{Z}(\mathfrak{g})$ is an ideal of $\mathfrak{g}$.
\end{proof}

\subsection{Cohomology of Hom-Lie algebras}
A cohomology complex for Hom-Lie algebras was initiated in \cite{AM 2007} and then completed, in the multiplicative case, independently in \cite{AEM11,Sheng}.  We recall cohomology complex which is   relevant for deformation theory of Hom-Lie algebras \cite{AM 2007}.

Let $\left( \mathfrak{g},[\cdot,\cdot ],\alpha \right) $ be a multiplicative Hom-Lie algebra. A $\mathfrak{g}$-valued $p$-cochain is a $p$-linear alternating map $\varphi : \mathfrak{g}^p \rightarrow \mathfrak{g}$ satisfying $$\alpha \circ \varphi(x_0,...,x_{p-1})=\varphi\big(\alpha (x_0),\alpha(x_1),...,\alpha(x_{p-1})\big) \; \hbox{for all}\; x_0,x_1,...,x_{p-1} \in \mathfrak{g}.$$
The set of $\mathfrak{g}$-valued $p$-cochains is denoted by $\mathcal{C}_{HL}^p(\mathfrak{g},\mathfrak{g})$.\\
 
We define, 
 for $p\geq1$,  a $p$-coboundary operator of the Hom-Lie algebra $(\mathfrak{g},[\cdot ,\cdot ],\alpha)$ to be  the linear map $\delta_{HL}^{p} : \mathcal{C}_{HL}^p(\mathfrak{g},\mathfrak{g}) \rightarrow \mathcal{C}_{HL}^{p+1}(\mathfrak{g},\mathfrak{g})$ defined by
\begin{align}\label{HLcohomo}
\delta_{HL}^p\varphi(x_0,x_1,...,x_p)& =\sum_{k=0}^p(-1)^k \big[\alpha^{p-1}(x_k),\varphi(x_0,...,\widehat{x_k},...,x_p)\big]\\
 \ & +\sum_{0\leq i<j\leq n}\varphi([x_i,x_j],\alpha(x_0),...,\widehat{x_i},...,\widehat{x_j},...,\alpha(x_p)),\nonumber
\end{align}
where $\widehat{x_k}$ means  that $x_k$ is omitted.

The space of $n$-cocycles is defined by $$Z_{HL}^n(\mathfrak{g},\mathfrak{g}) = \ker \delta_{HL}^p=\{\varphi \in \mathcal{C}^n(\mathfrak{g},\mathfrak{g}):\ \delta_{HL}^n\varphi=0\},$$ and the space of $n$-coboundaries is defined by $$B_{HL}^n(\mathfrak{g},\mathfrak{g})= \operatorname{Im} \delta_{HL}^{p-1}=\{\psi=\delta_{HL}^{n-1}\varphi :\  \varphi \in \mathcal{C}^{n-1}(\mathfrak{g},\mathfrak{g}) \}.$$

One has $B_{HL}^n(\mathfrak{g},\mathfrak{g}) \subset Z_{HL}^n(\mathfrak{g},\mathfrak{g})$. The $n^{th}$ cohomology group of the Hom-Lie algebra $\mathfrak{g}$ is the quotient
$$H_{HL}^n(\mathfrak{g},\mathfrak{g})=\frac{Z_{HL}^n(\mathfrak{g},\mathfrak{g})}{B_{HL}^n(\mathfrak{g},\mathfrak{g})}.$$
The cohomology group is given by $H_{HL}(\mathfrak{g},\mathfrak{g})=\oplus_{p\geq 0} H_{HL}^p(\mathfrak{g},\mathfrak{g})$.

The set of cochains $\mathcal{C}(\mathfrak{g},\mathfrak{g})= \oplus_{p\geq 0}^{p}\mathcal{C}(\mathfrak{g},\mathfrak{g})$ may be endowed with a structure of graded Lie algebra using Nijenhuis-Richardson bracket \cite{AEM11}. In this paper we don't assume that the 2-cochains are multiplicative since we are dealing with the general case, see \cite{AM 2007}.

\section{ Solvable, Nilpotent and Filiform Hom-Lie algebras}
In this section, we introduce the definitions of nilpotent,  filiform and solvable Hom-Lie algebras. Then we discuss their properties.

\begin{definition}
Let $ (\mathfrak{g},[\cdot,\cdot],\alpha )$ be  an $n$-dimensional Hom-Lie
algebra. We set 
\begin{align}
& C^{0}\left(  \mathfrak{g}\right)     =\mathfrak{g}, \quad 
C^{m+1}\left(  \mathfrak{g}\right)     =\left[  \mathfrak{g}, C^{m}\left(\mathfrak{g}\right)  \right]
,\ \ Êm\geq 1.\label{nilpCond1}
\end{align}
We say that  $\ \left(  C^{i}\left(  \mathfrak{g}\right)  \right)  _{i\geq 0}$ constitutes a decreasing
central series  of \ $\mathfrak{g}$ if 
\begin{equation}
  \alpha (C^m(\mathfrak{g}))\subset C^m(\mathfrak{g}).\label{nilpCond2}
\end{equation}
\end{definition}

\begin{remark}
If the Hom-Lie algebra $(\mathfrak{g},[\cdot,\cdot],\alpha )$ is multiplicative then 
\eqref{nilpCond2} is useless. One may define the decreasing central series only using \eqref{nilpCond1}.
\end{remark}

\begin{definition}
\bigskip An  $n$-dimensional Hom-Lie algebra $\left(  \mathfrak{g},\left[  \cdot ,\cdot \right]  ,\alpha\right)$, with a decreasing central series,  is nilpotent if there exists an integer $m$ such that 
\[
C^{m}\left(  \mathfrak{g}\right)  =0\text{ and }C^{m-1}\left(  \mathfrak{g}\right)  \neq0.
\]
The integer $m$ is the nilindex. We say that this Hom-Lie algebra is filiform when 
$$\dim C^{k}(\mathfrak{g})=n-k-1\ \text{ with } 1\leq k\leq n-1.$$
\end{definition}

Notice that the nilindex is maximal when the Hom-Lie algebra is  filiform.
\begin{remark}
The nilindex of $n$-dimensional Hom-Lie algebra is smaller or equal to $n-1$.
\end{remark}

\begin{example} The twisted Heisenberg algebra of Example \ref{TwistedHeisenberg} is nilpotent when the 
 linear maps  $\alpha $ are  given with respect to the basis by the matrices
$$
\left(
\begin{array}{ccc}
a_{11} & 0 & 0 \\
a_{21}& a_{22}& 0 \\
a_{31} & a_{32}& a_{11}a_{22}\\
\end{array}
\right),\;
$$  where $a_{11},a_{21},a_{22} , a_{21},a_{22}\in \mathbb{K}.$
\end{example}

\begin{proposition}
Let $\left(  \mathfrak{g},\left[  \cdot ,\cdot \right]  ,\alpha \right)  $ be a multiplicative Hom-Lie algebra. Then $C^{1}\mathfrak{g}=\left[  \mathfrak{g},\mathfrak{g}\right]  $ and for $m\geq 1$, 
$C^{m+1}\mathfrak{g}=\left[  \mathfrak{g},C^{m}\mathfrak{g}\right]  \ $ are ideals.
\end{proposition}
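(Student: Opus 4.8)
The plan is to run two short inductions: one showing that the series $\bigl(C^{m}\mathfrak{g}\bigr)_{m\geq 0}$ really is decreasing, and one showing that each term is $\alpha$-invariant, after which the ideal property is immediate. Note first that $C^{1}\mathfrak{g}=[\mathfrak{g},C^{0}\mathfrak{g}]=[\mathfrak{g},\mathfrak{g}]$ by the very definition of the central series, so the first assertion needs no argument.

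\emph{Step 1: the series is decreasing.} I would prove $C^{m+1}\mathfrak{g}\subseteq C^{m}\mathfrak{g}$ for all $m\geq 0$ by induction on $m$. For $m=0$ this reads $C^{1}\mathfrak{g}=[\mathfrak{g},\mathfrak{g}]\subseteq\mathfrak{g}=C^{0}\mathfrak{g}$. If $C^{m}\mathfrak{g}\subseteq C^{m-1}\mathfrak{g}$, then bilinearity of the bracket (monotonicity under inclusion in the second slot) gives $C^{m+1}\mathfrak{g}=[\mathfrak{g},C^{m}\mathfrak{g}]\subseteq[\mathfrak{g},C^{m-1}\mathfrak{g}]=C^{m}\mathfrak{g}$. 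In particular, for every $m\geq 1$ we obtain $[\mathfrak{g},C^{m}\mathfrak{g}]=C^{m+1}\mathfrak{g}\subseteq C^{m}\mathfrak{g}$; this is the first half of the ideal condition, and it uses nothing beyond bilinearity — no Hom-Jacobi identity is needed.

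\emph{Step 2: $\alpha$-invariance.} Here multiplicativity enters. I would show $\alpha\bigl(C^{m}\mathfrak{g}\bigr)\subseteq C^{m}\mathfrak{g}$ for all $m\geq 0$ by induction, the case $m=0$ being trivial since $\alpha$ maps $\mathfrak{g}$ into $\mathfrak{g}$. For the inductive step, $C^{m+1}\mathfrak{g}$ is spanned by brackets $[x,c]$ with $x\in\mathfrak{g}$ and $c\in C^{m}\mathfrak{g}$; applying the linear map $\alpha$ and using $\alpha([x,c])=[\alpha(x),\alpha(c)]$, together with $\alpha(x)\in\mathfrak{g}$ and (induction hypothesis) $\alpha(c)\in C^{m}\mathfrak{g}$, each such image lies in $[\mathfrak{g},C^{m}\mathfrak{g}]=C^{m+1}\mathfrak{g}$; taking spans gives $\alpha\bigl(C^{m+1}\mathfrak{g}\bigr)\subseteq C^{m+1}\mathfrak{g}$.

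Combining the two steps, every $C^{m}\mathfrak{g}$ with $m\geq 1$ satisfies $[\mathfrak{g},C^{m}\mathfrak{g}]\subseteq C^{m}\mathfrak{g}$ and $\alpha(C^{m}\mathfrak{g})\subseteq C^{m}\mathfrak{g}$, which is precisely the definition of an ideal; as a by-product this shows that condition \eqref{nilpCond2} holds automatically, in agreement with the Remark preceding the proposition. There is essentially no obstacle here: the only points to watch are that the bracket-closure in Step 1 is free from the descending property (and need not be extracted from a Jacobi-type computation), and that multiplicativity is exactly what is required — and sufficient — to carry $\alpha$-invariance through the induction in Step 2.
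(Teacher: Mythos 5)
Your proof is correct and follows essentially the same route as the paper's: the paper runs a single induction on the statement ``$C^{m}\mathfrak{g}$ is an ideal,'' which implicitly packages your Step 1 (the decreasing property, giving bracket-closure) and Step 2 (multiplicativity, giving $\alpha$-invariance) into one step. Your separation into two inductions is only an organizational difference, not a mathematical one.
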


\begin{proof}
Since $C^{1}\mathfrak{g}=\left[  \mathfrak{g},\mathfrak{g}\right]  $ $\subset \mathfrak{g}\ $and $\alpha(\left[  \mathfrak{g},\mathfrak{g}\right]
)=\left[ \alpha(\mathfrak{g}),\alpha(\mathfrak{g})\right]  \subset\left[  \mathfrak{g},\mathfrak{g}\right]  , $ thus $C^{1}\mathfrak{g}$
is an ideal. \newline 
Assume $C^{m}\mathfrak{g}$ is an ideal. We have $ \left[  \mathfrak{g},C^{m+1}\mathfrak{g}\right]=   \left[  \mathfrak{g},[\mathfrak{g},C^{m}\mathfrak{g}\right]\subset [\mathfrak{g},C^{m}\mathfrak{g}]= C^{m+1}\mathfrak{g},$
and $\alpha(C^{m+1}\mathfrak{g})=\alpha(\left[  \mathfrak{g},C^{m}\mathfrak{g}\right]
 )=\left[ \alpha(\mathfrak{g}),\alpha (C^{m}\mathfrak{g}  ) \right] \subset  \left[ \mathfrak{g},C^{m}\mathfrak{g}   \right]= C^{m+1}\mathfrak{g}.$
 Therefore $C^{m+1}\mathfrak{g}\ $is an ideal of $\mathfrak{g}.
$
\end{proof}
\begin{remark}
Notice  that when $\alpha~$is multiplicative, the central series $C^{n}(\mathfrak{g})$
constitutes a central series of ideals. 
\end{remark}

\begin{remark}
When the Hom-Lie algebra $\left(  \mathfrak{g},\left[  \cdot ,\cdot \right]  ,\alpha\right)$ is not necessarily multiplicative then the sequence $\left(  C^{m} \mathfrak{g} \right)  _{m}$ constitutes a descending central
series of ideals if $C^{m}  \mathfrak{g}$ are stable by $\alpha$ for $m>0$.
\end{remark}

\begin{proposition}
Let $\left(  \mathfrak{g},\left[  \cdot ,\cdot \right]  ,\alpha\right)$ be a Hom-Lie algebra, with a surjective structure map,  and 
 $A$, $B$  two ideals of $\mathfrak{g}$. The subspace $\left[
A,B\right]  $ of $\mathfrak{g}$, stable under $\alpha$, generated by the brackets $\left[  x,y\right]$, $x\in
A,y\in B$,  is still an ideal of $\mathfrak{g}$.
\end{proposition}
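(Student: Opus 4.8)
The plan is to verify directly the two defining properties of an ideal for the subspace $[A,B]$: that $\alpha\big([A,B]\big)\subseteq[A,B]$ and that $\big[\mathfrak{g},[A,B]\big]\subseteq[A,B]$. The first is precisely the stability under $\alpha$ that is already attached to the object $[A,B]$ in the statement, so there is nothing to prove there. For the second, since the bracket is bilinear and $[A,B]$ is linearly spanned by the elements $[a,b]$ with $a\in A$, $b\in B$, it is enough to check that $[g,[a,b]]\in[A,B]$ for every $g\in\mathfrak{g}$ and all $a\in A$, $b\in B$.

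To do this, I would first use the surjectivity of $\alpha$ to write $g=\alpha(x)$ for some $x\in\mathfrak{g}$, and then apply the Hom-Jacobi identity \eqref{HomJacobiCondition} to the triple $(x,a,b)$, which reads $[\alpha(x),[a,b]]+[\alpha(a),[b,x]]+[\alpha(b),[x,a]]=0$, so that $[g,[a,b]]=-[\alpha(a),[b,x]]-[\alpha(b),[x,a]]$. It then remains to place both terms on the right back inside $[A,B]$: since $A$ is an ideal, $\alpha(a)\in A$ and, by skew-symmetry, $[x,a]=-[a,x]\in A$; since $B$ is an ideal, $\alpha(b)\in B$ and $[b,x]=-[x,b]\in B$. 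Hence $[\alpha(a),[b,x]]$ is a bracket of an element of $A$ with an element of $B$, and $[\alpha(b),[x,a]]=-[[x,a],\alpha(b)]$ is the negative of such a bracket; both belong to $[A,B]$. Therefore $[g,[a,b]]\in[A,B]$, which finishes the verification and shows $[A,B]$ is an ideal.

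Under this reading — $[A,B]$ being the span of the brackets, with its $\alpha$-stability granted — the computation above is essentially the entire proof, the only real ingredients being the Hom-Jacobi identity and the bookkeeping of slots via skew-symmetry. The step I would expect to require the most care is the role of surjectivity: it is used exactly so that an arbitrary $g$ can be realized as $\alpha(x)$ and thus occupy the twisted outer slot of \eqref{HomJacobiCondition}; without it that slot cannot be made to range over all of $\mathfrak{g}$. I would also flag the alternative convention in which $[A,B]$ is defined to be the smallest $\alpha$-stable subspace containing the brackets (so that stability is automatic): then one must additionally control elements of the form $\alpha^{k}([a,b])$ for $k\ge1$, and the natural way to do so — proving that $W:=\{v\in\mathfrak{g}\ :\ [\mathfrak{g},v]\subseteq[A,B]\}$ is itself stable under $\alpha$, which together with $[a,b]\in W$ would give $[A,B]\subseteq W$ — is the point where multiplicativity of $\alpha$ (not assumed here) would normally enter, so in the non-multiplicative setting this is the delicate part that needs the $\alpha$-stability of $[A,B]$ to be built in rather than derived.
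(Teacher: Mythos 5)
Your argument is correct and is essentially the paper's own proof: both write an arbitrary $g$ as $\alpha(x)$ using surjectivity, apply the Hom-Jacobi identity in its Leibniz-type rearrangement $[\alpha(x),[a,b]]=[[x,a],\alpha(b)]+[\alpha(a),[x,b]]$, and then use that $A$, $B$ are ideals (hence closed under bracketing with $\mathfrak{g}$ and stable under $\alpha$) to land both terms in $[A,B]$. Your closing remark that the $\alpha$-stability of $[A,B]$ is taken as part of the hypothesis rather than derived matches the paper's formulation, which likewise does not attempt to prove it.
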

\begin{proof}Indeed, let $u\in \mathfrak{g}$ and $u=\alpha(u')$.
\[
\left[  u,\left[  A,B\right]  \right] =\left[  \alpha(u'),\left[  A,B\right]  \right] =\left[  \left[  u',A\right]  ,\alpha(B)\right]
+\left[  \alpha(A),\left[  u',B\right]  \right]\subset [A,B].
\]
\end{proof}

Now, we define the derived series of Hom-Lie algebras.
\begin{definition}
Let $(\mathfrak{g},[\cdot ,\cdot],\alpha)$ be a  multiplicative  Hom-Lie algebra, and $I$ an ideal of $\mathfrak{g}$. We define $D^r(I), r \in \mathbb{N}$, called the derived series of $I$,  by
\[D^0(I)=I \text{ and }D^{r+1}(I)=[D^r(I),D^r(I)].\]
\end{definition}
\begin{proposition}
The subspaces $D^r(I), r \in \mathbb{N}$, are subalgebras of $(\mathfrak{g},[\cdot , \cdot],\alpha)$.
\end{proposition}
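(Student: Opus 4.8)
The plan is to run a single induction on $r$ that simultaneously yields two facts: the descending inclusion $D^{r+1}(I)\subseteq D^r(I)$ and the $\alpha$-stability $\alpha(D^r(I))\subseteq D^r(I)$. Granting both, the assertion is immediate: closure under the bracket is the statement $[D^r(I),D^r(I)]=D^{r+1}(I)\subseteq D^r(I)$, and stability under the structure map is exactly $\alpha(D^r(I))\subseteq D^r(I)$, so $D^r(I)$ is a Hom-Lie subalgebra of $(\mathfrak{g},[\cdot,\cdot],\alpha)$.

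For the base case $r=0$ we use that $I$ is an ideal, not merely a subspace: this gives $D^1(I)=[I,I]\subseteq[\mathfrak{g},I]\subseteq I=D^0(I)$ and $\alpha(D^0(I))=\alpha(I)\subseteq I$. For the inductive step, assume $D^{r+1}(I)\subseteq D^r(I)$ and $\alpha(D^r(I))\subseteq D^r(I)$. Then $D^{r+2}(I)=[D^{r+1}(I),D^{r+1}(I)]\subseteq[D^r(I),D^r(I)]=D^{r+1}(I)$, which advances the descending inclusion. For the $\alpha$-part I invoke multiplicativity of $\mathfrak{g}$, which is part of the standing hypothesis in the definition of the derived series: since $\alpha([x,y])=[\alpha(x),\alpha(y)]$, we get $\alpha(D^{r+1}(I))=\alpha\big([D^r(I),D^r(I)]\big)=[\alpha(D^r(I)),\alpha(D^r(I))]\subseteq[D^r(I),D^r(I)]=D^{r+1}(I)$, using the inductive hypothesis in the last inclusion. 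This closes the induction.

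I do not expect a real obstacle. The only subtlety worth flagging is that bracket-closure of $D^r(I)$ is not visible directly from the recursion $D^{r+1}(I)=[D^r(I),D^r(I)]$ — a priori this bracket could escape $D^r(I)$ — so the nesting $D^{r+1}(I)\subseteq D^r(I)$ genuinely has to be proved first, and its base case is where the ideal hypothesis on $I$ is consumed. Everything downstream then rides on multiplicativity of $(\mathfrak{g},[\cdot,\cdot],\alpha)$, which propagates $\alpha$-invariance along the series.
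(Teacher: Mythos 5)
Your argument is correct and follows essentially the same induction as the paper: multiplicativity of $\alpha$ propagates $\alpha$-stability along the derived series, and the ideal hypothesis on $I$ handles the base case. The only difference is presentational — you make the nesting $D^{r+1}(I)\subseteq D^r(I)$ an explicit part of the inductive invariant, whereas the paper carries it implicitly inside the hypothesis that $D^r(I)$ is a subalgebra; your version is slightly more transparent on the bracket-closure step but is not a different proof.
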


\begin{proof}
We proceed by induction on $r \in \mathbb{N}$, the case of $r=0$ is trivial. Now suppose that $D^r(I)$ is a subalgebra of $\mathfrak{g}$. We prove that $D^{r+1}(I)$ is a subalgebra of $\mathfrak{g}$. Indeed
\begin{enumerate}
\item Let $y \in D^{r+1}(I)$, 
$
\alpha(y)=\alpha([y_{1},y_{2}])= [\alpha(y_{1}),\alpha(y_{2})] , \  y_{1},y_{2} \in D^r(I),
$ 
which is in $D^{r+1}(I)$ because $\alpha(y_{1}),\alpha(y_{2})\in D^r(I)$. That is $\alpha(D^{r+1}(I))\subseteq D^{r+1}(I)$.
\item  We have, for $x_1,x_2 \in D^{r+1}(I)$, 
$
[x_1,x_2] = [[x_{11},x_{12}],[x_{21},x_{22}]],$ where  $ x_{11},x_{12},x_{21},x_{22} \in D^r(I)$. Then  
$[x_1,x_2]\in D^{r+1}(I).$ \qedhere

\end{enumerate}
\end{proof}

\begin{proposition}
Let $(\mathfrak{g},[\cdot , \cdot],\alpha)$ be a multiplicative Hom-Lie algebra, and $I$ an ideal of $\mathfrak{g}$. If $\alpha$ is surjective, then $D^r(I),\, r \in \mathbb{N},$ are ideals of $\mathfrak{g}$.
\end{proposition}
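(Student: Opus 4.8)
The plan is to argue by induction on $r \in \mathbb{N}$, exactly paralleling the previous proposition (which already establishes that each $D^r(I)$ is a subalgebra, in particular stable under $\alpha$), and to upgrade "subalgebra" to "ideal" using the surjectivity of $\alpha$. The base case $r=0$ holds because $D^0(I) = I$ is an ideal of $\mathfrak{g}$ by hypothesis. For the inductive step, assume $D^r(I)$ is an ideal of $\mathfrak{g}$; we must show that $D^{r+1}(I) = [D^r(I), D^r(I)]$ is an ideal, i.e. that $[\mathfrak{g}, D^{r+1}(I)] \subseteq D^{r+1}(I)$ and $\alpha(D^{r+1}(I)) \subseteq D^{r+1}(I)$. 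The second inclusion is already contained in the preceding proposition (the $\alpha$-stability of $D^{r+1}(I)$), so the real content is the bracket inclusion.

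For the bracket inclusion I would first reduce to checking it on generators: $D^{r+1}(I)$ is spanned by brackets $[a,b]$ with $a,b \in D^r(I)$, so it suffices to show $[u,[a,b]] \in D^{r+1}(I)$ for all $u \in \mathfrak{g}$ and $a,b \in D^r(I)$. Here is where surjectivity enters: write $u = \alpha(u')$ for some $u' \in \mathfrak{g}$, and apply the Hom-Jacobi identity in the form
\[
[\alpha(u'),[a,b]] = [[u',a],\alpha(b)] + [\alpha(a),[u',b]].
\]
(This is just the cyclic Hom-Jacobi identity rewritten; it is the same manipulation used in the proof that $[A,B]$ is an ideal for surjective structure map earlier in this section.) Now $[u',a] \in D^r(I)$ and $[u',b] \in D^r(I)$ since $D^r(I)$ is an ideal by the induction hypothesis, and $\alpha(a), \alpha(b) \in D^r(I)$ since $D^r(I)$ is a subalgebra (stable under $\alpha$). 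Hence both terms on the right-hand side lie in $[D^r(I), D^r(I)] = D^{r+1}(I)$, so $[u,[a,b]] \in D^{r+1}(I)$. Extending bilinearly over arbitrary elements of $D^{r+1}(I)$ (which are linear combinations of such generators) gives $[\mathfrak{g}, D^{r+1}(I)] \subseteq D^{r+1}(I)$, completing the induction.

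The main obstacle, such as it is, is purely bookkeeping: one must be careful that the Hom-Jacobi identity produces the two terms in a form where both the "inner bracket with $u'$" factors and the "$\alpha$-twisted" factors genuinely land back in $D^r(I)$, which is why we need $D^r(I)$ to be simultaneously an ideal (for the $[u',a]$ terms) and a subalgebra (for the $\alpha(a)$ terms) — both facts being available by induction and by the earlier proposition respectively. There is no analytic or structural difficulty beyond this; the surjectivity of $\alpha$ is used exactly once, to write $u = \alpha(u')$, mirroring its role in the proposition on $[A,B]$.
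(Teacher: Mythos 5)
Your proposal is correct and follows essentially the same route as the paper's own proof: induction on $r$, surjectivity used once to write $x=\alpha(x')$, and the Hom-Jacobi identity in the form $[\alpha(x'),[y_1,y_2]]=[[x',y_1],\alpha(y_2)]+[\alpha(y_1),[x',y_2]]$, with the induction hypothesis handling the $[x',y_i]$ factors and $\alpha$-stability (from the earlier subalgebra proposition) handling the $\alpha(y_i)$ factors. Your explicit reduction to spanning brackets and bilinear extension is in fact slightly more careful than the paper, which treats a general element of $D^{r+1}(I)$ as a single bracket.
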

\begin{proof}
We already have that $D^r(I), \, r \in \mathbb{N},$ are subalgebras.  We only need to prove that for all $x \in \mathfrak{g}$, and $y \in D^r(I)$, $[x,y]\in  D^r(I)$.

We proceed by induction over $r \in \mathbb{N}$, the case of $r=0$ is trivial. Now suppose that $D^r(I)$ is an ideal of $\mathfrak{g}$, we prove that $D^{r+1}(I)$ is an ideal of $\mathfrak{g}$:

Let $x \in \mathfrak{g}$ and $y\in D^{r+1}(I)$:
\begin{align*}
[x,y] &= [x,[y_1,y_2]] \quad y_1,y_2 \in D^r(I) \\
&= [\alpha(x'),[y_1,y_2]] \text{ for some } x\in \mathfrak{g} \\
&=  [[x',y_1],\alpha(y_2)]+[\alpha(y_1),[x',y_2]],\\
[x,y]& \in D^{r+1}(I)
\end{align*}
because all $\alpha(y_{i}) \in D^{r}(I)$ and all $[x,y_i]\in D^{r}(I)$ ($D^r(I)$ is an ideal), and then all the $[\alpha(y_i),[v,y_j]]$ are in  $D^{r+1}(I)$.
\end{proof}

\begin{definition}
Let $(\mathfrak{g},[\cdot , \cdot],\alpha)$ be a multiplicative Hom-Lie algebra, and $I$ an ideal of $\mathfrak{g}$. We define $C^r(I)$, $r \in \mathbb{N}$, the central descending series of $I$ by
\[C^0(I)=I \text{ and }C^{r+1}(I)=[I, C^r(I)].\]
\end{definition}

\begin{proposition}
Let $(\mathfrak{g},[\cdot,\cdot],\alpha)$ be a multiplicative Hom-Lie algebra, and $I$ an ideal of $\mathfrak{g}$. If $\alpha$ is surjective, then $C^r(I) ,\, r \in \mathbb{N},$ are ideals of $\mathfrak{g}$.
\end{proposition}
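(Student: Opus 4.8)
The plan is to prove this by induction on $r \in \mathbb{N}$, exactly mirroring the proof just given for the derived series $D^r(I)$. I would first recall that $C^0(I) = I$ is an ideal by hypothesis, and that each $C^r(I)$ is stable under $\alpha$: since $\alpha$ is surjective and multiplicative, $\alpha(C^{r+1}(I)) = \alpha([I, C^r(I)]) = [\alpha(I), \alpha(C^r(I))] \subseteq [I, C^r(I)] = C^{r+1}(I)$, using $\alpha(I) \subseteq I$ and the inductive hypothesis $\alpha(C^r(I)) \subseteq C^r(I)$. This $\alpha$-stability is needed so that $C^r(I)$ is at least a Hom-Lie subalgebra before one argues it is an ideal.

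The core of the induction is the ideal property: given $x \in \mathfrak{g}$ and $y \in C^{r+1}(I)$, I must show $[x,y] \in C^{r+1}(I)$. Write $y = [a, c]$ with $a \in I$ and $c \in C^r(I)$ (extending linearly over sums of such brackets), and use surjectivity of $\alpha$ to write $x = \alpha(x')$ for some $x' \in \mathfrak{g}$. Then the Hom-Jacobi identity gives
\[
[x, [a,c]] = [\alpha(x'),[a,c]] = [[x',a],\alpha(c)] + [\alpha(a),[x',c]].
\]
For the first term, $[x',a] \in I$ because $I$ is an ideal, and $\alpha(c) \in C^r(I)$ by the $\alpha$-stability established above, so $[[x',a],\alpha(c)] \in [I, C^r(I)] = C^{r+1}(I)$. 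For the second term, $\alpha(a) \in I$ since $\alpha(I) \subseteq I$, and $[x',c] \in C^r(I)$ by the inductive hypothesis that $C^r(I)$ is an ideal, so $[\alpha(a),[x',c]] \in [I, C^r(I)] = C^{r+1}(I)$. Hence $[x,y] \in C^{r+1}(I)$, completing the induction.

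I do not expect a genuine obstacle here; the argument is routine and essentially identical in structure to the preceding proposition on $D^r(I)$. The one point requiring slight care is bookkeeping the asymmetry: in $C^{r+1}(I) = [I, C^r(I)]$ the two slots play different roles, so one must verify that \emph{both} terms produced by Hom-Jacobi land back in $[I, C^r(I)]$ and not merely in something larger like $[\mathfrak{g}, C^r(I)]$; this is exactly why one needs $I$ to be an ideal (to keep $[x',a] \in I$) together with $\alpha(I) \subseteq I$ (to keep $\alpha(a) \in I$). A secondary subtlety is that a general element of $C^{r+1}(I)$ is a finite linear combination of brackets $[a,c]$, so one should remark that it suffices to check the membership on such generators and extend by bilinearity, just as in the proof of the analogous statement for $D^r(I)$.
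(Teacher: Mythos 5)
Your proposal is correct and follows essentially the same route as the paper: induction on $r$, with $\alpha$-stability of $C^{r+1}(I)$ obtained from multiplicativity, and the ideal property obtained by writing $x=\alpha(x')$ and applying Hom-Jacobi so that both resulting terms land in $[I,C^r(I)]$ (your two terms are the paper's two terms up to skew-symmetry). Your explicit remark about extending from generators $[a,c]$ to linear combinations is a small point the paper leaves implicit.
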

\begin{proof}
We proceed by induction over $r \in \mathbb{N}$, the case of $r=0$ is trivial. Now suppose that $C^r(I)$ is an ideal of $\mathfrak{g}$, we prove that $C^{r+1}(I)$ is an ideal of $\mathfrak{g}$:
\begin{enumerate}
\item Let $y \in C^{r+1}(I)$:
\begin{align*}
\alpha(y)&=\alpha([y_1,w])= [\alpha(y_{1}), \alpha(w)], \quad y_{1} \in I, ~~w \in C^r(I),
\end{align*}
which is in $C^{r+1}(I)$ because $\alpha(y_{1})\in I$ and $\alpha(w)\in C^r(I)$. That is, $\alpha(C^{r+1}(I))\subseteq C^{r+1}(I)$.
\item Let $x \in \mathfrak{g}$ and $y\in C^{r+1}(I)$:
\begin{align*}
[x,y] &= [x,[y_{1},w]] \quad y_1 \in I, w \in C^r(I) \\
&= [\alpha(v),[y_1,w]] \text{ for some } v \in \mathfrak{g} \\
&=   [\alpha(w),[y_1,v]]+ [\alpha(y_1),[v,w]],
 \end{align*}
which is in $C^{r+1}(I)$, because  $\alpha(y_1)\in I$, $\alpha(w) \in C^{r}(I)$, all $[v,y_1]\in I$ (I is an ideal) and $[v,w]\in C^{r}(I)$ ($C^r(I)$ is an ideal).
\end{enumerate}
\end{proof}

\begin{definition}
Let $(\mathfrak{g},[\cdot , \cdot],\alpha)$ be a multiplicative Hom-Lie algebra, and $I$ an ideal of $\mathfrak{g}$. The ideal $I$ is said to be solvable if there exists $r \in \mathbb{N}$ such that $D^r(I)=\{0\}$. It is said to be nilpotent if there exists $r \in \mathbb{N}$ such that $C^r(I)=\{0\}$.\\
In particular, the Hom-Lie algebra $\mathfrak{g}$ is solvable if it is solvable as an ideal of itself.
\end{definition}

\begin{remark}
Multiplicative nilpotent Hom-Lie algebras are solvable.
\end{remark}

\section{Constructions by twist of Hom-Lie algebras}
The following construction, introduced by Yau \cite{DY2009-1}, permits to construct a Hom-Lie algebra starting from a Lie algebra and a Lie algebra morphism. We discuss it in the case of nilpotent and filiform Lie algebras. Moreover we provide some examples.
\begin{theorem}[\cite{DY2009-1}]
Let $(\mathfrak{g},[\cdot ,\cdot ])$ be a Lie algebra and $\alpha:\mathfrak{g}\rightarrow
\mathfrak{g}\mathcal{\ }$a Lie algebra morphism, that is  $\alpha[x,y]=[\alpha(x),\alpha(y)].\ $Then
$(\mathfrak{g},[\cdot ,\cdot ]_{\alpha},\alpha)$ is a Hom-Lie algebra where  $[x,y]_{\alpha}=[\alpha
(x),\alpha(y)].$
\end{theorem}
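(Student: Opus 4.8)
The plan is to verify directly that the triple $(\mathfrak{g},[\cdot,\cdot]_\alpha,\alpha)$ satisfies the two axioms of Definition \ref{def:HomLie}, namely skew-symmetry and the Hom-Jacobi identity \eqref{HomJacobiCondition}. Skew-symmetry is immediate: since $[\cdot,\cdot]$ is skew-symmetric, $[x,y]_\alpha=[\alpha(x),\alpha(y)]=-[\alpha(y),\alpha(x)]=-[y,x]_\alpha$, with no use of the morphism property at all. So the only real content is the Hom-Jacobi identity, and the key point exploited throughout is the hypothesis $\alpha([x,y])=[\alpha(x),\alpha(y)]$, i.e. $\alpha$ commutes with the bracket.

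The main step is to compute $\circlearrowleft_{x,y,z}[\alpha(x),[y,z]_\alpha]_\alpha$ and reduce it to the ordinary Jacobi identity of $[\cdot,\cdot]$. First I would expand one cyclic term:
\[
[\alpha(x),[y,z]_\alpha]_\alpha=[\alpha(x),[\alpha(y),\alpha(z)]]_\alpha=\big[\alpha(\alpha(x)),\,\alpha([\alpha(y),\alpha(z)])\big].
\]
Now apply the morphism property to pull $\alpha$ inside: $\alpha([\alpha(y),\alpha(z)])=[\alpha^2(y),\alpha^2(z)]$. Hence the term equals $[\alpha^2(x),[\alpha^2(y),\alpha^2(z)]]$. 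Summing over the cyclic permutation of $x,y,z$ gives
\[
\circlearrowleft_{x,y,z}[\alpha(x),[y,z]_\alpha]_\alpha=\circlearrowleft_{x,y,z}\big[\alpha^2(x),[\alpha^2(y),\alpha^2(z)]\big],
\]
which is precisely the ordinary Jacobi identity of $(\mathfrak{g},[\cdot,\cdot])$ evaluated at the triple $(\alpha^2(x),\alpha^2(y),\alpha^2(z))$, and therefore vanishes. This establishes \eqref{HomJacobiCondition}.

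There is essentially no obstacle here — this is a routine verification and the statement is standard (Yau's twisting principle). The only point requiring a little care is bookkeeping of the powers of $\alpha$: each application of $[\cdot,\cdot]_\alpha$ introduces one $\alpha$ on each slot, and the morphism property is what lets these powers be moved freely past the bracket so that the expression collapses to the untwisted Jacobi identity composed with $\alpha^2$. I would also remark, as an aside, that the resulting Hom-Lie algebra is automatically multiplicative, since $\alpha([x,y]_\alpha)=\alpha([\alpha(x),\alpha(y)])=[\alpha^2(x),\alpha^2(y)]=[\alpha(x),\alpha(y)]_\alpha$, again using only the hypothesis on $\alpha$.
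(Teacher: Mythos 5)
Your proof is correct. The paper itself states this result without proof (it is cited from Yau's paper), and your verification is the standard one: skew-symmetry is inherited directly, and the morphism property lets each cyclic term $[\alpha(x),[y,z]_\alpha]_\alpha$ collapse to $[\alpha^2(x),[\alpha^2(y),\alpha^2(z)]]$, so the Hom-Jacobi identity reduces to the ordinary Jacobi identity evaluated at $(\alpha^2(x),\alpha^2(y),\alpha^2(z))$. Your closing remark that the resulting Hom-Lie algebra is multiplicative is also correct and worth keeping, since the paper uses multiplicativity of such twists implicitly in the later sections.
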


Now, we consider the case of nilpotent algebras and filiform algebras. Moreover we use this twisting result to  construct nilpotent Hom-Lie algebras, which are of Lie type.

\begin{theorem}
Let $(\mathfrak{g},[\cdot ,\cdot ])$ be a nilpotent Lie algebra and $\alpha:\mathfrak{g}\rightarrow \mathfrak{g}\mathcal{\ }$a
Lie algebra morphism, that is  $\alpha\lbrack x,y]=[\alpha(x),\alpha(y)].\ $Then,
$(\mathfrak{g},[\cdot ,\cdot ]_{\alpha},\alpha)\ $\ is a nilpotent Hom-Lie algebra, where $\left[
x,y\right]  _{\alpha}=[\alpha(x),\alpha(y)]$.
\end{theorem}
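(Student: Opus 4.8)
The strategy is to show that forming the $\alpha$-twisted bracket $[\cdot,\cdot]_\alpha$ commutes, in the appropriate sense, with the construction of the descending central series, so that nilpotency of $(\mathfrak{g},[\cdot,\cdot])$ transfers directly to $(\mathfrak{g},[\cdot,\cdot]_\alpha,\alpha)$. First I would invoke the preceding theorem of Yau to know that $(\mathfrak{g},[\cdot,\cdot]_\alpha,\alpha)$ is a Hom-Lie algebra, and record that it is in fact multiplicative: since $\alpha$ is a Lie morphism, $\alpha([x,y]_\alpha)=\alpha([\alpha(x),\alpha(y)])=[\alpha^2(x),\alpha^2(y)]=[\alpha(x),\alpha(y)]_\alpha$. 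Hence, by the remark following the definition of the decreasing central series, condition \eqref{nilpCond2} is automatic and it suffices to control the subspaces $C^m(\mathfrak{g})$ built from $[\cdot,\cdot]_\alpha$.

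Next I would set up the comparison of the two central series. Write $C^m$ for the terms computed with the original Lie bracket and $\widetilde{C}^m$ for those computed with $[\cdot,\cdot]_\alpha$, both starting from $\mathfrak{g}$. The key claim, proved by induction on $m$, is $\widetilde{C}^m(\mathfrak{g})\subseteq C^m(\mathfrak{g})$ for all $m\geq 0$. The base case $m=0$ is the equality $\mathfrak{g}=\mathfrak{g}$. For the inductive step, $\widetilde{C}^{m+1}(\mathfrak{g})=[\mathfrak{g},\widetilde{C}^m(\mathfrak{g})]_\alpha$ is spanned by elements $[\alpha(x),\alpha(y)]$ with $x\in\mathfrak{g}$, $y\in\widetilde{C}^m(\mathfrak{g})$; by the induction hypothesis $\alpha(y)\in\alpha(\widetilde{C}^m(\mathfrak{g}))\subseteq\alpha(C^m(\mathfrak{g}))$, and since $\alpha$ is a Lie morphism one checks $\alpha(C^m(\mathfrak{g}))\subseteq C^m(\mathfrak{g})$ (a short separate induction: $\alpha(C^{m+1}(\mathfrak{g}))=\alpha([\mathfrak{g},C^m(\mathfrak{g})])=[\alpha(\mathfrak{g}),\alpha(C^m(\mathfrak{g}))]\subseteq[\mathfrak{g},C^m(\mathfrak{g})]=C^{m+1}(\mathfrak{g})$). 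Hence $[\alpha(x),\alpha(y)]\in[\mathfrak{g},C^m(\mathfrak{g})]=C^{m+1}(\mathfrak{g})$, which gives the claim.

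To finish, since $(\mathfrak{g},[\cdot,\cdot])$ is nilpotent there is an integer $m$ with $C^m(\mathfrak{g})=0$; by the claim $\widetilde{C}^m(\mathfrak{g})\subseteq C^m(\mathfrak{g})=0$, so the descending central series of $(\mathfrak{g},[\cdot,\cdot]_\alpha,\alpha)$ terminates at $0$, and therefore this Hom-Lie algebra is nilpotent (taking the least such index as its nilindex).

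\textbf{Main obstacle.} There is no deep obstacle here; the one point requiring care is purely bookkeeping: the twisted bracket inserts an extra $\alpha$ on each argument, so iterated brackets accumulate powers of $\alpha$, and one must be sure the comparison $\widetilde{C}^m\subseteq C^m$ is stated with the right placement of $\alpha$'s — which is exactly why the auxiliary fact $\alpha(C^m(\mathfrak{g}))\subseteq C^m(\mathfrak{g})$ (valid because $\alpha$ is a genuine Lie morphism) is needed to absorb those extra twists. One should also note that the inclusion $\widetilde{C}^m\subseteq C^m$ may be strict, so the nilindex of the Hom-Lie algebra can be strictly smaller than that of $(\mathfrak{g},[\cdot,\cdot])$; this does not affect nilpotency.
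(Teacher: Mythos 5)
Your proof is correct and follows essentially the same route as the paper: an induction showing that the descending central series of the twisted bracket is contained in that of the original Lie bracket, using that $\alpha$ is a Lie morphism to obtain $\alpha(C^m(\mathfrak{g}))\subseteq C^m(\mathfrak{g})$. Your write-up is in fact a bit more careful than the paper's (notably the explicit remark that the twisted structure is multiplicative, so the stability condition on the central series is automatic), but the underlying argument is the same.
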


\begin{proof}
One has $C_{\alpha}^{p}\mathfrak{g}=[\mathfrak{g},C_{\alpha
}^{p-1}\mathfrak{g}]_{\alpha}.$ We prove that $C_{\alpha}^{p}\mathfrak{g}\subset$ $C^{p}%
\mathfrak{g}$ by  induction.

The fact that $C_{\alpha}^{2}\mathfrak{g}=\alpha\lbrack \mathfrak{g},[\alpha(\mathfrak{g}),\alpha(\mathfrak{g})]]\subset\lbrack
\mathfrak{g},[\mathfrak{g},\mathfrak{g}]]=C^{2}\mathfrak{g}$ is a consequence of the fact that $\alpha\ $is a morphism of
$\mathfrak{g}$. \newline Indeed,
\[
\alpha\left(  C^{p}\left(  \mathfrak{g}\right)  \right)  =\alpha\left[ \mathfrak{g},C^{p-1}\left(
\mathfrak{g}\right)  \right]  =\left[  \alpha\left(  \mathfrak{g}\right)  ,\alpha\left(  C^{p-1}\left(
\mathfrak{g}\right)  \right)  \right]  \subset \left[  \mathfrak{g},\alpha\left(  C^{p-1}\left(  \mathfrak{g}\right)
\right)  \right]
\]%
\[
C_{\alpha}^{1}\left(  \mathfrak{g}\right)  =\alpha\left(  \left[  \mathfrak{g},\mathfrak{g}\right]  \right)
=\left[  \alpha\left(  \mathfrak{g}\right)  ,\alpha\left(  \mathfrak{g}\right)  \right]  \subset\left[
\mathfrak{g},\mathfrak{g}\right]  =C^{1}\left(  \mathfrak{g}\right).
\]
Suppose that $C_{\alpha}^{p-1}\left(  \mathfrak{g}\right)  \subset C^{p-1}\left(  \mathfrak{g}\right)
.$ We have,
\[
C_{\alpha}^{p}\left(  \mathfrak{g}\right)  =\left[  \mathfrak{g},C_{\alpha}^{p-1}\left(  \mathfrak{g}\right)
\right]  _{\alpha}\subset C^{p}\left(  \mathfrak{g}\right).
\]
It follows that, for all $p$,
\[
C_{\alpha}^{p}\left( \mathfrak{g}\right)  \subset C^{p}\left( \mathfrak{g}\right).
\]
\newline As $C^{n}\mathfrak{g}=0$, where $n\ $is the nilindex of ($\mathfrak{g},[\cdot ,\cdot ]$)\ and as
$C_{\alpha}^{p}\mathfrak{g}\subset C^{p}\mathfrak{g}\mathcal{\ }$for any $p$, we get $C_{\alpha}%
^{n}\mathfrak{g}=0.$ Thus $(\mathfrak{g},[\cdot ,\cdot ]_{\alpha},\alpha)\ $ is a nilpotent Hom-Lie
algebra.\newline
\end{proof}

However, the fact that $\ C_{\alpha}^{n-1}\mathfrak{g}\subset C^{n-1}\mathfrak{g}\neq0\ $\ implies
that it is possible that $C_{\alpha}^{n-1}\mathfrak{g}=0$. The nilindex of $\mathfrak{g}_\alpha$ is smaller or equal to the nilindex of $\mathfrak{g}$. Then the following question
arises: " Is the Hom-Lie algebra $(\mathfrak{g},[\cdot ,\cdot ]_{\alpha},\alpha)$ remains filiform or
only nilpotent?".  We have also the following result, which will be illustrated  by  two examples
below.

\begin{theorem}
Let $\left(  \mathfrak{g},\left[  \cdot ,\cdot \right]  \right)  $ be a filiform Lie algebra and
$\alpha$ a Lie algebra morphism. Then $\mathfrak{g}_{\alpha}=\left(  \mathfrak{g},\left[ \cdot ,\cdot \right]
_{\alpha},\alpha\right)  $ is a nilpotent  Hom-Lie algebra.
\\ If $\alpha$  is an automorphism then $\mathfrak{g}_{\alpha}$ is a filiform Hom-Lie algebra.
\end{theorem}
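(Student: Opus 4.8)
The plan is to get the first assertion for free from the preceding theorem and to prove the second one by showing that, when $\alpha$ is bijective, the lower central series of $\mathfrak{g}_\alpha$ computed with the twisted bracket $[\cdot,\cdot]_\alpha$ coincides, term by term, with the lower central series of $\mathfrak{g}$. For the first statement: a filiform Lie algebra is in particular nilpotent (its nilindex equals $n-1$), so the previous theorem applies verbatim and $\mathfrak{g}_\alpha=(\mathfrak{g},[\cdot,\cdot]_\alpha,\alpha)$ is a nilpotent Hom-Lie algebra. I would also record at this stage that, since $\alpha$ is a Lie morphism, $\alpha(C^p(\mathfrak{g}))=[\alpha(\mathfrak{g}),\alpha(C^{p-1}(\mathfrak{g}))]\subseteq C^p(\mathfrak{g})$ for all $p$, so the relevant central series is $\alpha$-stable, as the definition of nilpotency requires.

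For the second statement, assume $\alpha$ is an automorphism. The key step is the claim that $C^k_\alpha(\mathfrak{g})=C^k(\mathfrak{g})$ for every $k\ge 0$, where $C^\bullet_\alpha$ is the descending central series built from $[\cdot,\cdot]_\alpha$. I would first observe that, since $\alpha$ is a bijective Lie morphism, $\alpha(C^k(\mathfrak{g}))=C^k(\mathfrak{g})$ for all $k$: the inclusion $\subseteq$ follows from $\alpha$ being a morphism, and equality then follows either by comparing dimensions (injectivity) or by applying the same inclusion to the automorphism $\alpha^{-1}$. Then I would argue by induction on $k$. The case $k=0$ is trivial; for $k=1$, using $\alpha(\mathfrak{g})=\mathfrak{g}$,
\[
C^1_\alpha(\mathfrak{g})=[\mathfrak{g},\mathfrak{g}]_\alpha=\langle\,[\alpha(x),\alpha(y)]:x,y\in\mathfrak{g}\,\rangle=[\mathfrak{g},\mathfrak{g}]=C^1(\mathfrak{g}).
\]
Assuming $C^{k}_\alpha(\mathfrak{g})=C^{k}(\mathfrak{g})$, the same device gives
\[
C^{k+1}_\alpha(\mathfrak{g})=[\mathfrak{g},C^{k}_\alpha(\mathfrak{g})]_\alpha=[\alpha(\mathfrak{g}),\alpha(C^{k}(\mathfrak{g}))]=[\mathfrak{g},C^{k}(\mathfrak{g})]=C^{k+1}(\mathfrak{g}),
\]
where we used surjectivity of $\alpha$ together with $\alpha(C^k(\mathfrak{g}))=C^k(\mathfrak{g})$. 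This proves the claim.

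Once the claim is in hand, the conclusion is immediate: the stability requirement $\alpha(C^k_\alpha(\mathfrak{g}))\subseteq C^k_\alpha(\mathfrak{g})$ holds because $\alpha(C^k(\mathfrak{g}))=C^k(\mathfrak{g})$, so $(C^k_\alpha(\mathfrak{g}))_{k}$ really is a decreasing central series, and $\dim C^k_\alpha(\mathfrak{g})=\dim C^k(\mathfrak{g})=n-k-1$ for $1\le k\le n-1$ since $\mathfrak{g}$ is filiform; hence $\mathfrak{g}_\alpha$ is filiform. The one place where care is genuinely needed — and the reason the automorphism hypothesis cannot be dropped — is the identity $\alpha(C^k(\mathfrak{g}))=C^k(\mathfrak{g})$: for a general Lie morphism one obtains only $C^k_\alpha(\mathfrak{g})\subseteq C^k(\mathfrak{g})$, so the dimensions may strictly drop, which is precisely why the first part of the theorem guarantees nilpotency but not filiformity.
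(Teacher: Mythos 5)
Your proposal is correct and follows essentially the same route as the paper, which simply notes that the argument of the preceding theorem gives $C_{\alpha}^{p}(\mathfrak{g})\subset C^{p}(\mathfrak{g})$ and that this inclusion becomes an equality when $\alpha$ is a bijection. You have merely supplied the details (the induction and the identity $\alpha(C^{k}(\mathfrak{g}))=C^{k}(\mathfrak{g})$) that the paper leaves implicit.
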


\begin{proof}
The proof is the same as the preceding one. It is enough  to prove that
$C_{\alpha}^{p}\left(  \mathfrak{g}\right)  \subset C^{p}\left(  \mathfrak{g}\right)  $ and it turns to be an equality  when  $\alpha$ is a bijection.
\end{proof}
More generally, there is a construction that leads to a new Hom-Lie algebra along with a Hom-Lie algebra and a weak Hom-Lie algebra morphism.
\begin{theorem}
Let $\left( \mathfrak{g},\left[  \cdot ,\cdot \right]  ,\alpha\right)  $ be a 
Hom-Lie algebra and $\beta$ a weak morphism of Hom-Lie algebra. Then ,
$\mathfrak{g}_\beta=\left(  \mathfrak{g},\beta\left[  \cdot ,\cdot \right]  _{\beta},\beta\alpha\right)  $ is a
Hom-Lie algebra.\\
If $\mathfrak{g}$  is nilpotent then $\mathfrak{g}_\beta$ is nilpotent. \\
If $\mathfrak{g}$ is filiform then $\mathfrak{g}_\beta$ is nilpotent, and it is  filiform when $\beta$ is an automorphism.
\end{theorem}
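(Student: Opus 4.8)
The plan is to read this as the Hom-Lie incarnation of Yau's twisting principle and to reduce each of the three assertions to the corresponding property of $(\mathfrak{g},[\cdot,\cdot],\alpha)$, systematically peeling off copies of $\beta$ by means of the weak-morphism identity $\beta([u,v])=[\beta(u),\beta(v)]$. Concretely, $\mathfrak{g}_\beta$ carries the bracket $[x,y]_\beta:=\beta([x,y])$ (which, since $\beta$ is a weak morphism, also equals $[\beta(x),\beta(y)]$) together with the structure map $\beta\alpha$.

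First I would verify that $(\mathfrak{g},[\cdot,\cdot]_\beta,\beta\alpha)$ is a Hom-Lie algebra. Skew-symmetry of $[\cdot,\cdot]_\beta$ is inherited from $[\cdot,\cdot]$. For the Hom-Jacobi identity the decisive observation is that the outer argument $\beta\alpha(x)=\beta(\alpha(x))$ lies in $\operatorname{Im}\beta$, so that, applying the weak-morphism identity twice,
\[
[\beta\alpha(x),[y,z]_\beta]_\beta=\beta\big([\beta\alpha(x),\beta([y,z])]\big)=\beta\big(\beta([\alpha(x),[y,z]])\big)=\beta^{2}([\alpha(x),[y,z]]);
\]
summing over the cyclic permutations of $x,y,z$ and pulling $\beta^{2}$ out gives $\beta^{2}\big(\circlearrowleft_{x,y,z}[\alpha(x),[y,z]]\big)=0$ by the Hom-Jacobi identity of $\mathfrak{g}$. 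This is precisely why the twist of $\mathfrak{g}_\beta$ has to be $\beta\alpha$ and not $\alpha$: the reduction closes only because $\beta\alpha(x)$ lies in the image of $\beta$.

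Next, for nilpotency I would compare descending central series by induction on $m$. One first records the auxiliary inclusion $\beta(C^{m}(\mathfrak{g}))\subseteq C^{m}(\mathfrak{g})$ for all $m$ (induction, using $\beta([\mathfrak{g},C^{m-1}(\mathfrak{g})])=[\beta\mathfrak{g},\beta C^{m-1}(\mathfrak{g})]\subseteq[\mathfrak{g},C^{m-1}(\mathfrak{g})]$), and then, assuming $C^{m-1}(\mathfrak{g}_\beta)\subseteq C^{m-1}(\mathfrak{g})$, one obtains $C^{m}(\mathfrak{g}_\beta)=\beta\big([\mathfrak{g},C^{m-1}(\mathfrak{g}_\beta)]\big)\subseteq\beta(C^{m}(\mathfrak{g}))\subseteq C^{m}(\mathfrak{g})$. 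One also checks the twist-stability $\beta\alpha(C^{m}(\mathfrak{g}_\beta))\subseteq C^{m}(\mathfrak{g}_\beta)$, exactly as in the preceding twisting theorems, so that $(C^{m}(\mathfrak{g}_\beta))_m$ is a genuine decreasing central series. Hence $C^{N}(\mathfrak{g})=0$ forces $C^{N}(\mathfrak{g}_\beta)=0$, and $\mathfrak{g}_\beta$ is nilpotent whenever $\mathfrak{g}$ is.

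Finally, for the filiform statement I would observe that when $\beta$ is an automorphism the identity $\beta\mathfrak{g}=\mathfrak{g}$ upgrades all the above inclusions to equalities: by induction $\beta(C^{m}(\mathfrak{g}))=C^{m}(\mathfrak{g})$ and $C^{m}(\mathfrak{g}_\beta)=C^{m}(\mathfrak{g})$ as subspaces for every $m$, whence $\dim C^{k}(\mathfrak{g}_\beta)=\dim C^{k}(\mathfrak{g})$ and the filiform condition $\dim C^{k}=n-k-1$ transfers directly from $\mathfrak{g}$ to $\mathfrak{g}_\beta$. For a merely weak morphism $\beta$ only the inclusions survive, so the nilindex may drop and one can guarantee nilpotency but not filiformness — the same asymmetry already present in the Yau-twist theorems stated above. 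The step that needs the most care is the $\beta$-bookkeeping in the Hom-Jacobi computation together with the observation that the twist of $\mathfrak{g}_\beta$ must be $\beta\alpha$; once that is secured, the nilpotency and filiformness parts are routine inductions of exactly the shape already carried out in this section.
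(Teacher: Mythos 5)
Your proof is sound in its main lines and follows essentially the route the paper takes for the neighbouring twisting theorems: the paper in fact gives no proof of this particular statement, but for the preceding ones it proves $C_{\alpha}^{p}(\mathfrak{g})\subseteq C^{p}(\mathfrak{g})$ by induction and remarks that the inclusion becomes an equality when the twisting map is bijective. Your Hom--Jacobi computation (peeling off $\beta$ twice to get $\beta^{2}$ of the original cyclic sum), the inductive inclusions $\beta(C^{m}(\mathfrak{g}))\subseteq C^{m}(\mathfrak{g})$ and $C^{m}(\mathfrak{g}_{\beta})\subseteq C^{m}(\mathfrak{g})$, and the upgrade to equalities when $\beta$ is an automorphism are all correct, and you supply the verification of the Hom-Lie axioms that the paper omits entirely.

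The one place where you are too quick is the sentence claiming that the twist-stability $\beta\alpha(C^{m}(\mathfrak{g}_{\beta}))\subseteq C^{m}(\mathfrak{g}_{\beta})$ is checked ``exactly as in the preceding twisting theorems.'' In those theorems the input is a Lie algebra and the twist of the new algebra is the morphism itself, so the twisted algebra is multiplicative and stability of the central series is automatic. Here $\alpha$ is only the structure map of a not necessarily multiplicative Hom-Lie algebra, not a morphism of the bracket, so $\beta\alpha$ is in general not a morphism of $\beta\circ[\cdot,\cdot]$. Unwinding the definitions gives $C^{m}(\mathfrak{g}_{\beta})=[\beta\mathfrak{g},[\beta^{2}\mathfrak{g},\ldots]]$, and from the hypotheses one only obtains $\beta\alpha(C^{m}(\mathfrak{g}_{\beta}))\subseteq\beta(C^{m}(\mathfrak{g}))$, which may be strictly larger than $C^{m}(\mathfrak{g}_{\beta})$; the asserted stability therefore needs either an extra hypothesis or a separate argument. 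This does not affect the substantive conclusion that the series $C^{m}(\mathfrak{g}_{\beta})$ terminates (and has the right dimensions when $\beta$ is bijective), but under the paper's own definition --- where ``nilpotent'' presupposes a decreasing central series stable under the structure map --- it is the one non-routine point, and the paper itself is silent on it.
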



\begin{corollary}
Let $\left(  \mathfrak{g},\left[  \cdot ,\cdot \right]  ,\alpha\right)  $ be a multiplicative Hom-Lie
algebra and $n\geq0.$ The nth derived Hom-Lie algebra  defined by
$
\mathfrak{g}_{\left(  n\right)  }=\left(  \mathfrak{g},\left[  \cdot ,\cdot \right]  ^{(n)}=\alpha^{n}\circ \left[
\cdot ,\cdot \right]  ,\alpha^{n+1}\right)
$ is a Hom-Lie algebra.\\
 Moreover, if $\mathfrak{g}$
is filiform, then $\mathfrak{g}_{\left(  n\right)  }$ is nilpotent and it is filiform when $\alpha$ is a bijection.
\end{corollary}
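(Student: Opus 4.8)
The plan is to verify the Hom-Lie axioms for $\mathfrak{g}_{(n)}=(\mathfrak{g},[\cdot,\cdot]^{(n)},\alpha^{n+1})$ by a direct computation, and then to compare the central descending series of $\mathfrak{g}_{(n)}$ with that of $\mathfrak{g}$. Skew-symmetry of $[\cdot,\cdot]^{(n)}=\alpha^{n}\circ[\cdot,\cdot]$ is immediate since $\alpha^{n}$ is linear and $[\cdot,\cdot]$ is skew-symmetric. For the Hom-Jacobi identity I would expand, for $x,y,z\in\mathfrak{g}$, using multiplicativity of $\alpha$ (i.e.\ $\alpha[a,b]=[\alpha a,\alpha b]$) applied repeatedly,
\[
[\alpha^{n+1}(x),[y,z]^{(n)}]^{(n)}=\alpha^{n}\big([\alpha^{n+1}(x),\alpha^{n}([y,z])]\big)=[\alpha^{2n+1}(x),[\alpha^{2n}(y),\alpha^{2n}(z)]].
\]
Summing over the cyclic permutations of $x,y,z$ and writing $\alpha^{2n+1}(x)=\alpha(\alpha^{2n}(x))$, the sum is precisely the left-hand side of the Hom-Jacobi identity of $(\mathfrak{g},[\cdot,\cdot],\alpha)$ evaluated at the triple $(\alpha^{2n}(x),\alpha^{2n}(y),\alpha^{2n}(z))$, hence it vanishes. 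This shows $\mathfrak{g}_{(n)}$ is a Hom-Lie algebra.

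Next I would observe that $\mathfrak{g}_{(n)}$ is multiplicative: the one-line check $\alpha^{n+1}([x,y]^{(n)})=\alpha^{2n+1}([x,y])=[\alpha^{2n+1}(x),\alpha^{2n+1}(y)]=[\alpha^{n+1}(x),\alpha^{n+1}(y)]^{(n)}$ does it. By the earlier Proposition and Remark on multiplicative Hom-Lie algebras, the central descending series $(C^{k}(\mathfrak{g}_{(n)}))_{k}$ is then automatically a decreasing central series of ideals, so the notions of nilpotency and filiformity make sense for $\mathfrak{g}_{(n)}$.

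For the nilpotency claim I would prove by induction on $k$ that $C^{k}(\mathfrak{g}_{(n)})\subseteq C^{k}(\mathfrak{g})$. The case $k=0$ is trivial, and assuming it at step $k$,
\[
C^{k+1}(\mathfrak{g}_{(n)})=[\mathfrak{g},C^{k}(\mathfrak{g}_{(n)})]^{(n)}=\alpha^{n}\big([\mathfrak{g},C^{k}(\mathfrak{g}_{(n)})]\big)\subseteq\alpha^{n}\big([\mathfrak{g},C^{k}(\mathfrak{g})]\big)=\alpha^{n}\big(C^{k+1}(\mathfrak{g})\big)\subseteq C^{k+1}(\mathfrak{g}),
\]
the last inclusion holding because $C^{k+1}(\mathfrak{g})$ is $\alpha$-stable. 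Hence if $\mathfrak{g}$ is filiform (in particular nilpotent), $C^{m}(\mathfrak{g})=0$ forces $C^{m}(\mathfrak{g}_{(n)})=0$, so $\mathfrak{g}_{(n)}$ is nilpotent. When $\alpha$ is bijective one has $\mathfrak{g}=\alpha^{n}(\mathfrak{g})$, and the same induction refines to the equality $C^{k}(\mathfrak{g}_{(n)})=\alpha^{kn}\big(C^{k}(\mathfrak{g})\big)$; since $\alpha^{kn}$ is then bijective, $\dim C^{k}(\mathfrak{g}_{(n)})=\dim C^{k}(\mathfrak{g})=\dim\mathfrak{g}-k-1$ for all $k$, so $\mathfrak{g}_{(n)}$ is filiform whenever $\mathfrak{g}$ is.

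The only thing requiring attention is the bookkeeping of the powers of $\alpha$ and the repeated invocation of multiplicativity; there is no genuine obstacle, and the argument is essentially the one already used for the twisting theorems above with the weak morphism replaced by the appropriate power of $\alpha$.
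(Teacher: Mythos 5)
Your proof is correct and follows essentially the same route as the paper: the paper obtains this corollary as the special case $\beta=\alpha^{n}$ of the preceding weak-morphism twisting theorem, whose proof is exactly the inclusion $C^{k}(\mathfrak{g}_{(n)})\subseteq C^{k}(\mathfrak{g})$ with equality (of dimensions) when $\alpha$ is bijective. Your direct verification of the Hom-Jacobi identity, multiplicativity, and the induction on the central descending series simply instantiates that argument, with correct bookkeeping of the powers of $\alpha$.
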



\begin{theorem}
Let $\left(  \mathfrak{g},\left[  \cdot ,\cdot \right]  ,\alpha\right)  $ be a filiform Hom-Lie algebra
and $\alpha$ an algebra automorphism. Then, $\left(  \mathfrak{g},\left[ \cdot  ,\cdot \right]
_{\alpha^{-1}}\right)  $ is a filiform Lie algebra.\newline
\end{theorem}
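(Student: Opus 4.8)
The plan is to verify that the back-twisted bracket $[\cdot,\cdot]' := [\cdot,\cdot]_{\alpha^{-1}}$, defined by $[x,y]' = [\alpha^{-1}(x),\alpha^{-1}(y)]$, is an ordinary Lie bracket on $\mathfrak{g}$, and then to show that its lower central series coincides term by term with that of $(\mathfrak{g},[\cdot,\cdot],\alpha)$; filiformness of the resulting Lie algebra is then immediate from the dimension hypothesis.

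First I would record that, since $\alpha$ is an algebra automorphism of the Hom-Lie algebra, it is bijective and multiplicative, hence $\alpha^{-1}$ is multiplicative as well and $[x,y]' = \alpha^{-1}([x,y])$; equivalently $[\cdot,\cdot] = \alpha\circ[\cdot,\cdot]'$, so $(\mathfrak{g},[\cdot,\cdot],\alpha)$ is of Lie type with candidate Lie bracket $[\cdot,\cdot]'$. Skew-symmetry of $[\cdot,\cdot]'$ is inherited from $[\cdot,\cdot]$. For the Jacobi identity I would expand
$[[x,y]',z]' = \alpha^{-1}\!\big([\alpha^{-1}([x,y]),z]\big) = \alpha^{-1}\!\big([[\alpha^{-1}(x),\alpha^{-1}(y)],\alpha(\alpha^{-1}(z))]\big)$,
set $u=\alpha^{-1}(x)$, $v=\alpha^{-1}(y)$, $w=\alpha^{-1}(z)$, and use skew-symmetry to rewrite each cyclic term as $-\alpha^{-1}\big([\alpha(w),[u,v]]\big)$. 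Summing over the cyclic permutations of $(x,y,z)$ — equivalently of $(u,v,w)$ — and using injectivity of $\alpha^{-1}$, the cyclic sum $\circlearrowleft_{x,y,z}[[x,y]',z]'$ equals $-\alpha^{-1}$ applied to $\circlearrowleft_{u,v,w}[\alpha(u),[v,w]]$, which vanishes by the Hom-Jacobi identity. Thus $(\mathfrak{g},[\cdot,\cdot]')$ is a Lie algebra.

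To see that this Lie algebra is filiform, I would first note that multiplicativity and surjectivity of $\alpha$ give $\alpha(C^m(\mathfrak{g})) = C^m(\mathfrak{g})$ for every term of the descending central series of $(\mathfrak{g},[\cdot,\cdot])$. Then, writing $C^m_{\alpha^{-1}}(\mathfrak{g})$ for the $m$-th term of the descending central series of $(\mathfrak{g},[\cdot,\cdot]')$, I would prove $C^m_{\alpha^{-1}}(\mathfrak{g}) = C^m(\mathfrak{g})$ by induction: the case $m=0$ is trivial, and assuming the equality at stage $m$, the space $C^{m+1}_{\alpha^{-1}}(\mathfrak{g})$ is the span of the $[x,y]' = \alpha^{-1}([x,y])$ with $x\in\mathfrak{g}$, $y\in C^m(\mathfrak{g})$, i.e. $\alpha^{-1}([\mathfrak{g},C^m(\mathfrak{g})]) = \alpha^{-1}(C^{m+1}(\mathfrak{g})) = C^{m+1}(\mathfrak{g})$. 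Hence $\dim C^k_{\alpha^{-1}}(\mathfrak{g}) = \dim C^k(\mathfrak{g}) = n-k-1$ for $1\le k\le n-1$, so $(\mathfrak{g},[\cdot,\cdot]_{\alpha^{-1}})$ is a filiform Lie algebra.

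The only step that requires genuine care is the Jacobi identity, and the crux there is that $\alpha$ is assumed to be an algebra automorphism rather than merely a linear automorphism: multiplicativity of $\alpha$ (hence of $\alpha^{-1}$) is exactly what lets the power $\alpha^{-1}$ be absorbed into the bracket arguments so that the cyclic sum collapses onto the Hom-Jacobi identity. The central-series computation is then routine bookkeeping, using only that $\alpha$ is a bijection stabilizing each $C^m(\mathfrak{g})$.
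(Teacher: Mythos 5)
Your argument is correct. Note that the paper states this theorem without any proof at all, so there is nothing to compare line by line; the closest analogue is the preceding twisting theorems, whose proofs reduce to the inclusion $C_{\alpha}^{p}(\mathfrak{g})\subset C^{p}(\mathfrak{g})$ ``with equality when $\alpha$ is a bijection,'' and your central-series computation is exactly that argument run for the inverse twist. The genuinely new content you supply is the verification of the ordinary Jacobi identity for $[\cdot,\cdot]_{\alpha^{-1}}$: writing $[x,y]_{\alpha^{-1}}=\alpha^{-1}([x,y])$ (which uses multiplicativity of $\alpha$, hence of $\alpha^{-1}$), substituting $u=\alpha^{-1}(x)$, $v=\alpha^{-1}(y)$, $w=\alpha^{-1}(z)$, and collapsing the cyclic sum onto $\circlearrowleft_{u,v,w}[\alpha(u),[v,w]]=0$ is exactly the right mechanism, and it correctly identifies the role of the automorphism hypothesis (a merely linear bijection would not suffice). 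Two cosmetic remarks: injectivity of $\alpha^{-1}$ is not actually needed at the end of the Jacobi computation, only linearity, since you apply $\alpha^{-1}$ to an expression already known to vanish; and in the central-series step it is worth saying explicitly that $C^{m+1}_{\alpha^{-1}}(\mathfrak{g})$ is the \emph{span} of the brackets $[x,y]_{\alpha^{-1}}$ so that $\alpha^{-1}$ can be pulled out of the span, which you do. Neither point affects correctness.
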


\begin{example}
Consider the 4-dimensional filiform Lie algebra ($\mathfrak{g} ,[\cdot  ,\cdot])\ $defined by
\[
\left\{
\begin{array}
[c]{c}%
\left[  x_{1},x_{4}\right]  =x_{3}\\
\left[  x_{1},x_{3}\right]  =x_{2}%
\end{array}
\right.
\]
Let $\alpha$ be a linear map  such that, $\alpha(x_{i})=%
{\displaystyle\sum\limits_{k=1}^{4}}
\alpha _{ik}x_{k}, $ where $\alpha _{ik}$ are parameters. Among all the morphisms on $\mathfrak{g}$, we give hereafter two
examples.\newline For the morphism%
\[
\left\{
\begin{array}
[c]{c}%
 \alpha(x_{1})=\alpha_{12}x_{2}+\alpha_{13}x_{3}+\alpha_{14}x_{4}\\
 \alpha(x_{2})=0\\
 \alpha(x_{3})=0\\
  \alpha(x_{4})=\alpha_{42}x_{2}+\alpha_{43}x_{3}+\alpha_{44}x_{4}%
\end{array}
\right.
\]
The Hom-Lie algebra $\left(  \mathfrak{g}\ ,[\cdot ,\cdot ]_{\alpha},\alpha\right)  \ $\ is nilpotent but
not filiform.\newline Consider  the morphism,%
\[
\left\{
\begin{array}
[c]{c}%
\alpha(x_{1})=\alpha_{11}x_{1}+\alpha_{12}x_{2}+\alpha_{13}x_{3}+\alpha_{14}x_{4}\\
\alpha(x_{2})=\alpha_{11}\alpha_{33}x_{2}\\
\alpha(x_{3})=\alpha_{11}\alpha_{43}x_{2}+\alpha_{33}x_{3}\\
\alpha(x_{4})=\alpha_{42}x_{2}+\alpha_{43}x_{3}+\left(  \alpha_{33}/\alpha_{11}\right)
x_{4}%
\end{array}
\right.
\]
with $\alpha_{11}\alpha_{33}^{3}\neq0\ $ leading to  $\det(M_{\alpha})=\alpha_{11}\alpha_{33}%
^{3}\ \neq0.\ $\ Thus, $\alpha$ is an automorphism. It follows that,
($ \mathfrak{g} ,[\cdot ,\cdot ]_{\alpha},\alpha)\ $\ is a filiform Hom-Lie algebra.
\end{example}

\section{Filiform Hom-Lie algebras Varieties\bigskip}

Now, we adapt to Hom-type algebras the construction procedure initiated by Vergne \cite{vergne} and  described by Khakimdjanov in
\cite{Hakim1,Hakim2}, which was used to classify filiform Lie algebras up to dimension
$11$, see \cite{9}. Therefore, we provide a classification of  filiform Hom-Lie algebras up to dimension $7$.

\bigskip

Let $L_{n}$ be the $(n+1)$-dimensional Hom-Lie algebra defined by%
\begin{eqnarray}
& \lbrack x_{0},x_{i}]=x_{i+1},\  \  i=1,\cdots ,n-1\label{Ln}\\
& \text{and }\alpha=id,
\end{eqnarray}
where $\{x_{k}\}_{0\leq k\leq n}$ is a basis of $L_{n}$
(the undefined brackets are defined either by skew-symmetry or being zero). In the sequel, we refer to this bracket as $\mu_0$.\\  The Hom-Lie algebra $L_n$ is the model Filiform Hom-Lie algebra. It is  in a certain manner, the
simplest filiform Hom-Lie algebra. One may consider the structure map  $\alpha$ to be the identity map or any linear map associated to any lower trigonal matrix with respect to the basis $\{x_{k}\}_{0\leq k\leq n}$. They are given with respect to the basis, for $k=0, \cdots , n$,  by
$\alpha(x_k)=\sum_{i=k}^{n}\rho_{ik}x_i$, where $\rho_{i,k}$ are parameters in $\mathbb{K}$.\\

The Hom-Lie algebra $L_n$, with lower triangular structure map matrices is multiplicative  in the following cases:

\noindent \textbf{Case $L_2$:} The map $\alpha$ is defined, with respect to the basis $\{x_{k}\}_{0\leq k\leq n}$, as 
\begin{align*}
& \alpha (x_0)= \rho_{00}x_0+\rho_{10}x_1+\rho_{20}x_2,
\quad  \alpha (x_1)= \rho_{11}x_1+\rho_{21}x_2,\quad
 \alpha (x_2)= \rho_{00}\rho_{11} x_2.
\end{align*}
\noindent  \textbf{Case $L_n$ with $n\geq 3$ and $\rho_{00}\neq 0$ : }the map $\alpha$ is defined as 
\begin{align*}
& \alpha (x_0)= \sum_{i=0}^{n}\rho_{i,0}x_i,\\
& \alpha (x_1)=(\sum_{i=2}^{n-1}\frac{\rho_{i,2}}{\rho_{00}}x_{i-1})+ \rho_{n-1,1}x_{n-1}+\rho_{n,1}x_n,\\
& \alpha (x_2)=(\sum_{i=2}^{n-1}\rho_{i,2}x_{i})+ \rho_{00}\rho_{n-1,1}x_n,\\
& \alpha (x_k)=(\sum_{i=2}^{n-k+2}\rho_{00}^{k-2}\rho_{i,2}x_{k+i-2},\quad \text{  for }  k\geq 3.\\
\end{align*}
There is also another family of multiplicative Hom-Lie algebras $L_n$, corresponding to $\rho_{00}=0$,  which are given by  linear maps  $\alpha$ defined as
\begin{align*}
& \alpha (x_0)= \sum_{i=1}^{n}\rho_{i,0}x_i,\quad
 \alpha (x_1)=\sum_{i=1}^{n}\rho_{i,1}x_i,\quad
 \alpha (x_k)=0\ \text{  for }  k\geq 2.
\end{align*}

The approach developed by Vergne and Khakimdjanov to classify filiform Lie algebra and used in \cite{9} is based on the observation that any $(n+1)$-dimensional
filiform Lie algebra can be obtained as  a linear deformation of $L_{n}$ and it uses  the description of appropriate  cocycles that deform the bracket $\mu_0$ of the Lie algebra $L_n$. In the following, we discuss this construction for Hom-Lie algebras. \\
Let $\Delta$ be the set of pairs of integers $(k,r)$ such that $1\leq
k\leq n-1$ and  $2k+1<r\leq n$ (if $n$ is odd we suppose that $\Delta $  contains also the
pair $((n-1)/2,n)$). For any element $\ (k,r)\in\Delta$, we  associate a skew-symmetric 
2-cochain   denoted $\psi_{k,r}$ and defined by 
\begin{equation}\psi_{k,r}\left(  x_{i}%
,x_{j}\right)  =(-1)^{k-j}C_{j-k-1}^{k-i}x_{i+j+r-2k-1},~\label{PsiKR}
\end{equation} with $1\leq i\leq
k\leq j\leq n,\ $and $\psi_{k,r}\left(  x_{i},x_{j}\right)  =0$ elsewhere.  Moreover we
set $C_{0}^{0}=1$ and $C_{q}^{p}=0~$when $q<p.$


We aim to construct, up to isomorphism,  all $(n+1)$-dimensional filiform Hom-Lie algebras $(L_n,\mu, \alpha )$ given by a bracket $\mu$ and a structure map $\alpha$, where  
$\mu=\mu_{0}+\psi$ where  $\mu_{0}\ $ is the  bracket  in the model filiform Hom-Lie algebra $L_{n} $,  defined by $\mu_{0}\left(  x_{0}%
,x_{i}\right)  =x_{i+1},\  i=1,\cdots ,n-1,$ and $\psi\ $is 
defined by $\psi=%
{\displaystyle\sum\limits_{\left(  k,r \right)\in \Delta  }}
a_{k,r}\psi_{k,r}.$
The structure map $\alpha$ is given  by a  map, deforming the identity, corresponding, with respect to the basis, to a lower triangular  matrix of the form
\begin{small}
\[
\alpha=\left(
\begin{array}
[c]{ccccc}%
\rho_{00} & 0 &\cdots & 0 & 0\\
\rho_{10} & \rho_{11} & \cdots &  & 0\\%
\begin{array}
[c]{c}%
.\\
.
\end{array}
&  &  &  &
\begin{array}
[c]{c}%
.\\
.
\end{array}
\\
&  &  &  & 0\\
\rho_{n0} & \rho_{n1} &\cdots &  & \rho_{nn}%
\end{array}
\right).
\]
\end{small}
We define for two  skew-symmetric bilinear maps $\phi_1,\phi_2:\mathfrak{g}\times\mathfrak{g}\rightarrow \mathfrak{g} $ on a vector space $\mathfrak{g}$ with respect to a linear map $\alpha$,    the Gerstenhaber circle, which is a trilinear map defined as 
$$\phi_1\circ \phi_2 (x,y,z) =\phi_1(\phi_2 (x,y),\alpha (z))+\phi_1(\phi_2 (z,x),\alpha (y))+\phi_1(\phi_2 (y,z),\alpha (x)).$$
Therefore, an operation $\mu$ satisfies Hom-Jacobi condition is equivalent to $\mu\circ \mu=0$ and a 2-cochain $\psi$ is a 2-cocycle with respect to a Hom-Lie algebra $(\mathfrak{g},[\cdot, \cdot ],\alpha)$ if $[\cdot, \cdot ] \circ \psi+\psi\circ [,\cdot, \cdot ]=0.$\\
Using the Gerstenhaber circle, the bracket  $\ \mu=\mu_{0}+\psi$ satisfies the Hom-Jacobi identity if and only if
$\mu \circ \mu\left(  x_{i},x_{j},x_{k}\right)  =0\ \text{ for any } \ i,j,k \in %
\mathbb{N}
;$ that is 
\begin{eqnarray}\label{mu-psi}
&\left(  \mu_{0}+\psi\right)  \left(  \alpha\left(  x_{i}\right)  ,(\mu_{0}%
+\psi)\left(  x_{j},x_{k}\right)  \right)  +\left(  \mu_{0}+\psi\right)
\left(  \alpha\left(  x_{k}\right)  ,\mu_{0}+\psi\left(  x_{i},x_{j}\right)
\right)
\\
&+\left(  \mu_{0}+\psi\right)  \left(  \alpha\left(  x_{j}\right)  ,\mu_{0}%
+\psi\left(  x_{k},x_{i}\right)  \right)  =0.\nonumber
\end{eqnarray}
 Thus,  since $\mu_0$ satisfies Hom-Jacobi identity, it reduces using Gerstenhaber circle to  
 \begin{equation}\label{mu-psi2}
\delta^{2}\psi+\psi \circ \psi=0.
\end{equation} 


\begin{theorem}
Let
 $(L_n,\mu, \alpha )$ be an $(n+1)$-dimensional filiform Hom-Lie algebra given by a bracket 
$\mu=\mu_{0}+\psi$ where  $\mu_{0}$ is the  bracket  in the model filiform Hom-Lie algebra $L_{n} $,  defined by $\mu_{0}\left(  x_{0}%
,x_{i}\right)  =x_{i+1},\  i=1,\cdots ,n-1,$ the cochain  $\psi\ $is 
defined by $\psi=
{\displaystyle\sum\limits_{\left(  k,r \right)\in \Delta  }}
a_{k,r}\psi_{k,r}$ and 
the structure map $\alpha$ is given, with respect to the basis, by a lower triangular  matrix of the form
\begin{small}
\[
\alpha=\left(
\begin{array}
[c]{ccccc}%
\rho_{00} & 0 &\cdots & 0 & 0\\
0 & \rho_{11} & \cdots &  & 0\\%
\begin{array}
[c]{c}%
.\\
.
\end{array}
&  &  &  &
\begin{array}
[c]{c}%
.\\
.
\end{array}
\\
&  &  &  & 0\\
0 & \rho_{n1} &\cdots &  & \rho_{nn}%
\end{array}
\right).
\]
\end{small}
Then the bracket $ \mu=\mu_{0}+\psi$ satisfies the Hom-Jacobi identity if and only if
\begin{align}
&\delta^{2}\psi   ={\displaystyle\sum\limits_{\left(  k,r\right) \in\Delta   }}
a_{k,r}(\mu_{0}\circ\psi_{k,r}+\psi_{k,r}\circ\mu_{0})=0~\begin{small}(\psi \text{ is a 2-cocycle})\end{small},\\
&\psi\circ\psi   =0\ \begin{small}(\text{Hom-Jacobi condition})\end{small}.
\end{align}
\end{theorem}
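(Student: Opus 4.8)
The plan is to expand the Hom-Jacobi condition $\mu\circ\mu=0$ for $\mu=\mu_0+\psi$ using the bilinearity of the Gerstenhaber circle, and then to match terms of homogeneous degree in $\psi$. First I would record that, since the Gerstenhaber circle $\phi_1\circ\phi_2$ is bilinear in the pair $(\phi_1,\phi_2)$, one has the identity $\mu\circ\mu=\mu_0\circ\mu_0+(\mu_0\circ\psi+\psi\circ\mu_0)+\psi\circ\psi$; I would point out that the middle bracketed quantity is exactly the coboundary $\delta^2\psi$ as defined in \eqref{HLcohomo}, because $\psi$ is a $2$-cochain over the Hom-Lie algebra $(L_n,\mu_0,\alpha)$ and the Chevalley--Eilenberg differential in degree $2$ is precisely $\delta^2\psi = \mu_0\circ\psi+\psi\circ\mu_0$ when $\alpha$ acts appropriately. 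Then, invoking that $\mu_0$ itself satisfies the Hom-Jacobi identity on $L_n$ (i.e.\ $\mu_0\circ\mu_0=0$), equation \eqref{mu-psi} collapses to \eqref{mu-psi2}, namely $\delta^2\psi+\psi\circ\psi=0$.

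The decisive observation is that the two summands $\delta^2\psi$ and $\psi\circ\psi$ take values in \emph{different} (complementary) gradings of $L_n$, so that their sum vanishes if and only if each vanishes separately. Concretely, I would use the weight grading of $L_n$ in which $x_i$ has weight $i$: the bracket $\mu_0$ is homogeneous of weight $+1$ (since $\mu_0(x_0,x_i)=x_{i+1}$, with $x_0$ of weight $0$), whereas each $\psi_{k,r}$ is homogeneous of weight $r-2k-1\geq 1$ by \eqref{PsiKR}, and by the constraint $2k+1<r$ these weights are all $\geq 1$ and in fact strictly larger than the weight of $\mu_0$. Hence $\mu_0\circ\psi_{k,r}+\psi_{k,r}\circ\mu_0$ has weight $r-2k$, while $\psi_{k,r}\circ\psi_{k',r'}$ has the strictly larger weight $(r-2k-1)+(r'-2k'-1)+1 = r-2k+r'-2k'-1 > r-2k$ term by term. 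Since these homogeneous components land in disjoint sets of basis vectors, $\delta^2\psi+\psi\circ\psi=0$ forces $\delta^2\psi=0$ and $\psi\circ\psi=0$ independently. Finally, expanding $\delta^2\psi=\sum_{(k,r)\in\Delta}a_{k,r}(\mu_0\circ\psi_{k,r}+\psi_{k,r}\circ\mu_0)$ by bilinearity of the circle gives the displayed first equation, and this proves the equivalence.

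I should also check that the specific block-lower-triangular form of $\alpha$ assumed in the statement (with zeros in the first column below the diagonal, so $\alpha(x_0)=\rho_{00}x_0$) is compatible with the cochain condition $\alpha\circ\psi_{k,r}=\psi_{k,r}\circ(\alpha\otimes\alpha)$ used implicitly when identifying $\mu_0\circ\psi+\psi\circ\mu_0$ with $\delta^2\psi$; I would verify this is automatic, or at worst imposes no new constraint, from the fact that $\alpha$ preserves the weight filtration $\langle x_j : j\geq i\rangle$ and each $\psi_{k,r}$ is built from the basis monomials $x_{i+j+r-2k-1}$. The main obstacle I anticipate is not the degree argument itself—which is clean once the grading is set up—but the bookkeeping needed to confirm that $\mu_0$ genuinely satisfies the Hom-Jacobi identity for the \emph{non-identity} twist maps $\alpha$ allowed here, and that the coboundary formula \eqref{HLcohomo} specialized to $p=2$ really reproduces $\mu_0\circ\psi+\psi\circ\mu_0$ with the correct placement of $\alpha$ and $\alpha^{p-1}=\alpha$; this is where the proof must be written carefully rather than waved through.
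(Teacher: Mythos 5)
Your reduction of $\mu\circ\mu=0$ to $\delta^{2}\psi+\psi\circ\psi=0$ is fine (bilinearity of the circle product plus $\mu_{0}\circ\mu_{0}=0$, which indeed holds for any lower triangular $\alpha$), but the decisive step --- splitting this single equation into the two separate conditions --- is where your argument breaks down. You propose to separate the two summands by the weight grading in which $x_{i}$ has weight $i$, claiming that every $\psi_{k,r}$ has weight $r-2k-1$ strictly larger than the weight $1$ of $\mu_{0}$, and that $\psi_{k,r}\circ\psi_{k',r'}$ has weight $(r-2k-1)+(r'-2k'-1)+1>r-2k$. Both claims are false. First, $\psi_{1,4}$ has weight $4-2-1=1$, equal to that of $\mu_{0}$, and the extra pair $((n-1)/2,n)$ allowed in $\Delta$ for odd $n$ gives a cocycle of weight $0$. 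Second, the circle product of two $2$-cochains of weights $w_{1}$ and $w_{2}$ has weight $w_{1}+w_{2}$, with no extra $+1$. Consequently the weights genuinely overlap: for instance $\mu_{0}\circ\psi_{1,4}+\psi_{1,4}\circ\mu_{0}$ and $\psi_{1,4}\circ\psi_{1,4}$ both have weight $2$, and in dimension $6$ the component $\mu_{0}\circ\psi_{2,5}+\psi_{2,5}\circ\mu_{0}$ and the cross term $\psi_{1,4}\circ\psi_{2,5}$ both have weight $1$ (and the paper's dimension-$6$ analysis exhibits exactly such a nontrivial mixed condition). So the grading does not force the two summands to vanish separately.

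The separation that actually works --- and is the paper's proof --- is by the \emph{support} of the two terms on basis triples rather than by weight. Since $\mu_{0}(x_{i},x_{j})=0$ whenever $i,j\geq 1$ and $\alpha$ is lower triangular, $\delta^{2}\psi=\mu_{0}\circ\psi+\psi\circ\mu_{0}$ vanishes identically on every triple $(x_{i},x_{j},x_{k})$ with $i,j,k\geq 1$; on those triples the equation forces $\psi\circ\psi=0$. Conversely, since $\psi_{k,r}(x_{0},\cdot)=0$ and the hypothesis on $\alpha$ gives $\alpha(x_{0})=\rho_{00}x_{0}$ (the vanishing of the first column below the diagonal is exactly where that hypothesis enters), $\psi\circ\psi$ vanishes identically on every triple containing $x_{0}$; on those triples the equation forces $\delta^{2}\psi=0$. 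You should replace the weight argument by this case analysis on triples; the rest of your write-up (the bilinear expansion, the identification of $\mu_{0}\circ\psi+\psi\circ\mu_{0}$ with $\delta^{2}\psi$, and the check that $\mu_{0}\circ\mu_{0}=0$ for non-identity lower triangular twists) can be kept.
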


\begin{proof}
Assume  that $\mu=\mu_{0}+\psi$ satisfies the Hom-Jacobi identity. Therefore we have the identity \eqref{mu-psi2}, which is evaluated for triples $(x_i,x_j,x_k)$. \\
If  $i,j,k\neq0,$ then $\delta^{2}\psi$ is always 0, which lead to $\psi\circ\psi   =0$.
If one of the indices is 0, then  $\psi\circ\psi   =0$, which leads to $\delta^{2}\psi=0.$
\end{proof}

Now, in the general situation, we consider one-parameter formal deformations introduced by Gerstenhaber for associative algebras \cite{M.G64} and by Nijenhuis-Richardson for Lie algebras \cite{NijenhuisRichardson}. This deformation theory  was extended to Hom-type algebras in \cite{AM 2007}. We assume that $\mu$ is an infinitesimal deformation, that is of the form $\mu=\mu_{0}+t \psi $, where $t$ is a parameter. Then $\mu$ satisfies Hom-Jacobi condition is equivalent to 
 \begin{equation}\label{mu-psi2bis}
\mu_0\circ\mu_0+t \delta^{2}\psi+t^2\psi \circ \psi=0.
\end{equation} 
Hence, by identification, it is equivalent to the system

\[
\left\{
\begin{array}
[c]{c}%
 \delta^{2}\psi   =0~\begin{small}(\psiÊ\text{ is  a 2-cocycle})\end{small},\\
 \psi\circ\psi   =0\ \begin{small}(\text{Hom-Jacobi condition})\end{small}.
\end{array}
\right.
\]

Now, we discuss these conditions on triples of basis elements $(x_i,x_j,x_k)$. \\

\noindent  \textbf{Case 1. $1\leq i< j<  k\leq n$}\\
Condition $\delta^{2}\psi=\mu_{0}\circ\psi+\psi\circ\mu_{0}=0$ is always true and $\psi\circ \psi =0$ is equivalent to 
\begin{equation}\label{psi0psi1}
\psi(\alpha(x_i),\psi(x_j,x_k))+\psi(\alpha(x_k),\psi(x_i,x_j))+\psi(\alpha(x_j),\psi(x_k,x_i))=0.
\end{equation}

\noindent \textbf{Case 2.  $1\leq i< j\leq n$ and $ k=0 $}\\
Condition  $\delta^{2}\psi=\mu_{0}\circ\psi+\psi\circ\mu_{0}=0$ is equivalent to 
\begin{equation}
\mu(\rho_{00} x_0,\psi(x_i,x_j))-\psi(\alpha(x_i),x_{j+1})+\psi(\alpha(x_j),x_{i+1})=0,\label{deltaPsi1}
\end{equation}
and condition  $\psi\circ \psi =0$ is equivalent to
\begin{equation}
\psi(\alpha (x_0),\psi(x_i,x_j))=0. \label{psiOpsi2}
\end{equation}

\begin{remark}
The maps  $\psi_{k,r}$, defined in \eqref{PsiKR}, are 2-cocycles if, for $i,j,k,r$ such that $1\leq
k\leq n-1$,   $2k+1<r\leq n$ and  $1\leq i\leq
k\leq j\leq n,\ $ we have 

\begin{eqnarray*}
& {\displaystyle\sum\limits_{e\geq j}}
\rho_{je}(-1)^{k-e}C_{i-k}^{k-e}x_{e+i+r-2k}+\rho_{00}(-1)^{k-i}%
C_{j-k-1}^{k-i}x_{j+i+r-2k}\\
 & -
{\displaystyle\sum\limits_{l\geq j}}
\rho_{lj}C_{j-k}^{k-l}x_{j+l+r-2k}=0.
\end{eqnarray*}
\end{remark}

\subsection{Filiform Hom-Lie algebras varieties and Adapted bases}
Let $\mathcal{F}_{m}$ be the variety of $m$-dimensional  filiform Hom-Lie algebras. Any $(n+1)$-dimensional filiform Hom-Lie
algebra bracket  $\mu\in \mathcal{F}_{n+1}$ is given by the bracket  $\mu_{0}+\psi$, where $\mu_{0}$
is the bracket  of the model filiform  Hom-Lie algebra $L_{n}$ defined in \eqref{Ln} and $\psi$ is a 2-cocycle defined by $\psi=
{\displaystyle\sum\limits_{\left(  k,r\right)  }}
a_{k,r}\psi_{k,r},$ where $\psi_{k,r}$ are defined in \eqref{PsiKR}, and satisfying  $\psi \circ \psi=0$.

We aim, for a given dimension,  to find the 2-cochains $\psi$ such that the conditions  $\delta^{2}\psi=0$ and $\psi \circ \psi=0$ are satisfied. In order to simplify  the calculation, we use the concept of adapted basis. We say that a basis is  \emph{adapted} if 
the bracket can be written as  $\mu=\mu_{0}+\psi$ in this basis and the matrix associated to the structure map $\alpha$  is still lower triangular. 

In the sequel, following \cite{9} we study adapted basis changes.

\begin{proposition}
Let $(\mathfrak{g},[\cdot ,\cdot ],\alpha )$ be an $(n+1)$-dimensional filiform Hom-Lie algebra and $\{ x_{i}\}_{i=0,\cdots n}$ be an adapted basis. 
Let $f\in End\left(  \mathfrak{g}\right)  $ be an invertible map  such
that $\{  f\left(  x_{0}\right)  ,f\left(  x_{1}\right)  ,\cdots,f\left(
x_{n}\right)  \}  $ is an adapted basis. Then,
\begin{align*}
f\left(  x_{0}\right)   &  =a_{0}x_{0}+a_{1}x_{1}+\cdots +a_{n}x_{n}\\
f\left(  x_{1}\right)   &  =b_{0}x_{0}+b_{1}x_{1}+\cdots +b_{n}x_{n}\\
&  ...\\
f\left(  x_{i}\right)   &  =\left[  f\left(  x_{0}\right)  ,f\left(
x_{i-1}\right)  \right],
\end{align*}
where $a_{0}b_{1}\neq0.$
\end{proposition}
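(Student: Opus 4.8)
The claim is a structural description of how an adapted basis can be changed. First I would unpack what "adapted basis" means here: the bracket $\mu$ is written as $\mu_0+\psi$ with $\psi=\sum_{(k,r)}a_{k,r}\psi_{k,r}$, and the structure map $\alpha$ is lower triangular. In particular, in an adapted basis the model relation $\mu_0(x_0,x_{i-1})=x_i$ holds, so that $x_i=[x_0,x_{i-1}]$ for $i=1,\dots,n$ modulo the contribution of $\psi$; but since $\psi_{k,r}(x_0,x_j)=0$ by \eqref{PsiKR} (the index $i$ there satisfies $1\le i\le k$, so $i\ne0$), we actually have the \emph{exact} equality $[x_0,x_{i-1}]=x_i$ in any adapted basis. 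This is the key elementary observation that makes the last displayed line forced.

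So the plan is: given an invertible $f$ such that $\{f(x_0),\dots,f(x_n)\}$ is again an adapted basis, write $f(x_0)=\sum_j a_j x_j$ and $f(x_1)=\sum_j b_j x_j$ — these are completely general, as the first two lines of the statement assert. Then I would invoke the observation of the previous paragraph applied to the new adapted basis $\{f(x_i)\}$: adaptedness forces $f(x_i)=[\,f(x_0),f(x_{i-1})\,]$ for $i\ge2$ (and $i\ge1$, but $i=0,1$ are the free ones). This is exactly the recursive formula in the statement. Thus $f$ is entirely determined by its values on $x_0$ and $x_1$.

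Next I would justify the nondegeneracy condition $a_0b_1\ne0$. The quickest route is a filtration/weight argument: the model $L_n$ carries the grading (or at least the filtration by the lower central series) in which $x_0$ has the role of the "generator acting by raising degree" and $x_i$ has degree $i$ for $i\ge1$; the descending central series gives $C^k=\langle x_{k+1},\dots,x_n\rangle$. For $f$ to send the adapted basis to an adapted basis it must in particular preserve this flag and be an isomorphism onto the filiform structure, which forces $f(x_0)$ to have a nonzero $x_0$-component (otherwise $\mathrm{ad}_{f(x_0)}$ would be nilpotent of the wrong type and the images $[f(x_0),f(x_{i-1})]$ could not span a complement), i.e. $a_0\ne0$; and then $f(x_1)$ must have nonzero $x_1$-component modulo higher terms, since $f(x_1)$ together with the iterated brackets must span, i.e. $b_1\ne0$. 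More concretely: $C^1(\mathfrak g)=[\mathfrak g,\mathfrak g]$ has codimension $2$, spanned modulo itself by the classes of $x_0$ and $x_1$; since $f$ is an algebra automorphism (or at least preserves the central series as any adapted-basis-preserving map must), the induced map on $\mathfrak g/C^1(\mathfrak g)$ is invertible, and its matrix in the basis $\{\bar x_0,\bar x_1\}$ is $\begin{pmatrix} a_0 & b_0 \\ a_1 & b_1\end{pmatrix}$; combined with the filiform constraint that $x_0$'s class maps to something with nonzero $x_0$-component (forced because $f(x_0)$ must act as a non-nilpotent-in-the-bad-sense operator to regenerate the series), one deduces $a_0\ne0$ and then $a_0b_1-a_1b_0\ne0$ forces $b_1\ne0$ as well. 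I expect the main obstacle to be precisely this last point — pinning down exactly which nondegeneracy inputs are needed and in what form, since the statement only asserts $a_0b_1\ne0$ and not the full invertibility of the $2\times2$ block; I would likely need to use that in an adapted basis the coefficient $a_{1,r}$-type terms of $\psi$ vanish on the relevant pairs, together with the explicit shape of $\alpha$ being lower triangular, to rule out $a_1b_0$ cross-terms interfering. The rest is bookkeeping with \eqref{PsiKR} and the definition of adapted basis.
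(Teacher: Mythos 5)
Your reduction of the problem is on target: the observation that $\psi_{k,r}(x_0,x_j)=0$ (the indices in \eqref{PsiKR} start at $1$), so that $[x_0,x_{i-1}]=x_i$ holds exactly in any adapted basis and forces the recursion $f(x_i)=[f(x_0),f(x_{i-1})]$, is the same starting point as the paper's, and your argument that $a_0\neq 0$ and that the induced map on $\mathfrak{g}/C^1(\mathfrak{g})$ is invertible (so $a_0b_1-a_1b_0\neq0$) is sound. But the final deduction fails: from $a_0\neq0$ and $a_0b_1-a_1b_0\neq0$ you cannot conclude $b_1\neq0$. Take $a_0=1$, $a_1=1$, $b_0=-1$, $b_1=0$: the determinant is $1\neq0$ yet $b_1=0$. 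So the flag/quotient argument alone does not prove the stated conclusion $a_0b_1\neq0$; you flagged this as ``the main obstacle'' but the resolution you sketch is exactly the step that does not go through.

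The missing ingredient, which is the actual content of the paper's proof, is that adaptedness of the new basis forces $b_0=0$, and only then does $a_0b_1=a_0b_1-a_1b_0\neq0$ follow. Concretely, the recursion gives
\[
f(x_i)=a_0^{i-2}\left(a_0b_1-a_1b_0\right)x_i+\sum_{j\geq i+1}I_jx_j,
\]
and a direct computation yields
\[
\left[f(x_1),f(x_2)\right]=b_0\left(a_0b_1-a_1b_0\right)x_3+\sum_{j\geq4}\lambda_jx_j .
\]
In an adapted basis the bracket of $x_1'$ and $x_2'$ must lie in the span of $x_4',\dots,x_n'$, because every $\psi_{k,r}$ with $(k,r)\in\Delta$ sends $(x_1,x_2)$ to a multiple of $x_r$ with $r\geq 2k+2\geq4$; so the $f(x_3)$-component of $[f(x_1),f(x_2)]$, which equals $b_0/a_0$ times a nonzero scalar, must vanish. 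Hence $b_0=0$. This computation with the cross-term $a_1b_0$ is precisely what you said you would ``likely need''; without carrying it out the proof of $a_0b_1\neq0$ is incomplete.
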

\begin{proof}
We have for $i\geq2$, $a_{0}\neq0$ and \ $\left(  a_{0}b_{1}-a_{1}b_{0}\right)  \neq
0$,
\[
f\left(  x_{i}\right)  =\left[  f\left(  x_{0}\right)  ,f\left(
x_{i-1}\right)  \right]  =a_{0}^{i-2}\left(  a_{0}b_{1}-a_{1}b_{0}\right)
x_{i}+%
{\displaystyle\sum\limits_{j\geq i+1}}
I_{j}x_{j},%
\]
since the Hom-Lie algebra  is filiform. It follows that,
\begin{equation}
\left[  f\left(  x_{1}\right)  ,f\left(  x_{2}\right)  \right]  =b_{0}\left(
a_{0}b_{1}-a_{1}b_{0}\right)  x_{3}+%
{\displaystyle\sum\limits_{j\geq 4}}
\lambda_{j}x_{j} .\label{ad}%
\end{equation}
When $b_{0}\neq0,$ we have
\[
\left[  f\left(  x_{1}\right)  ,f\left(  x_{2}\right)  \right]  =\lambda
f\left(  x_{3}\right)  +%
{\displaystyle\sum\limits_{j\geq4}}
e_{j}f\left(  x_{j}\right),
\]
with $\lambda\neq0.$ Thus, $\left(  f\left(  x_{i}\right)  \right)  _{i=0,\cdots , n}$
is not adapted. This proves that the relation $\left(  \ref{ad}\right)  $ is possible  only when $b_{0}=0.$\newline Conversely, an arbitrary choice of
scalars $a_{0},a_{1},\cdots,a_{n},b_{1},\cdots , b_{n}$ with $a_{0}b_{1}\neq 0$,
determines an invertible map  $f\in End\left(  \mathfrak{g}\right)  $ such that the basis $$\{
f\left(  x_{0}\right)  ,f\left(  x_{1}\right)  ,\cdots ,f\left(  x_{n}
\right) \} $$ is adapted.

The fact that the structure map is given by a lower triangular matrix is straightforward.
\end{proof}

\begin{remark}
 A basis change $f\in End\left(  \mathfrak{g}\right) 
$ is said to be adapted, with respect to  the bracket  $\mu$ and the structure map $\alpha$  if the image of an adapted basis is
an adapted basis.

The set of adapted basis changes in $\mathcal{F}_{m}$ is a closed subgroup of the linear group $GL_m(\mathfrak{g})$ and  it is denoted by $GL_{ad}(\mathfrak{g})$. Moreover,  $dim\ GL_{ad}(\mathfrak{g})=2m+1$.
\end{remark}

\subsection{ Elementary Basis changes}
The following types of basis changes are said to be
elementary:\newline\textbf{ Type 1} : Let $2\leq k\leq n,\ b\in%
\mathbb{K}
$. We define 
$
\sigma\left(  b,k\right)$  by $$x_{0}^{\prime}=x_{0},\ Êx_{1}%
^{\prime}=x_{1}+bx_{k},\ x_{i}^{\prime}=\left[  x_{0}^{\prime}%
,x_{i-1}^{\prime}\right] \text{ for } 2\leq i\leq n.
$$
\newline \textbf{Type 2}: Let $1\leq k\leq n,\ a\in%
\mathbb{K}
$. We define 
$
\tau\left(  a,k\right) $ by $$x_{0}^{\prime
}=x_{0}+ax_{k},\ x_{1}^{\prime}=x_{1},\  x_{i}^{\prime}=\left[  x_{0}^{\prime},x_{i-1}^{\prime
}\right]  \text{ for } 2\leq i\leq n.
$$
\newline \textbf{Type 3}: Let $a,b\in%
\mathbb{K}
^{\ast}$. We define 
$
\upsilon\left(  a,b\right)$ by $$x_{0}^{\prime}=ax_{0},\ x_{1}^{\prime}=b x_{1},\ x_{i}^{\prime}=\left[  x_{0}^{\prime}%
,x_{i-1}^{\prime}\right]  \text{ for } 2\leq i\leq n.
$$

The following theorem shows that any adapted basis change  can be
represented as a composition of  elementary basis changes. 

\begin{theorem}
[\cite{9}] Let $f$ be an element of $GL_{ad}(\mathfrak{g})$. Then $f$ is a composition  of
elementary basis changes; that is, if $f$ is given by
\begin{align*}
f\left(  x_{0}\right)   &  =a_{0}x_{0}+a_{1}x_{1}+...a_{n}x_{n}\\
f\left(  x_{1}\right)   &  =b_{0}x_{0}+b_{1}x_{1}+...b_{n}x_{n}\\
&  ...\\
f\left(  x_{i}\right)   &  =\left[  f\left(  x_{0}\right)  ,f\left(
x_{i-1}\right)  \right]
\end{align*}
Then,  $f$ can be represented as a composition 
\begin{align*}
\allowbreak f  &  =\upsilon\left(  a_{0},b_{1}\right)  \circ\tau\left(
a_{n}/a_{0},n\right)  \circ\tau\left(  a_{n-1}/a_{0},n-1\right)  \circ
\cdots \circ\ \tau\left(  a_{1}/a_{0},1\right) \\
&  \circ\sigma\left(  b_{n}/b_{1},n\right)  \circ\sigma\left(  b_{n-1}%
/b_{0},n-1\right)  \circ\cdots \circ\sigma\left(  b_{2}/b_{0},2\right).
\end{align*}
 
\end{theorem}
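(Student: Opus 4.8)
The plan is to take an arbitrary $f\in GL_{ad}(\mathfrak{g})$, determined by the scalars $a_0,\dots,a_n,b_0,\dots,b_n$ as in the statement, and verify directly that the prescribed composition of elementary changes produces exactly the same map on the adapted basis. Since an adapted basis change is completely determined by the images $f(x_0)$ and $f(x_1)$ (because $f(x_i)=[f(x_0),f(x_{i-1})]$ for $i\geq 2$, and the pair $f(x_0),f(x_1)$ must have $a_0b_1\neq 0$ by the previous proposition, which also forces $b_0=0$), it suffices to check that the composition on the right-hand side sends $x_0\mapsto f(x_0)$ and $x_1\mapsto f(x_1)$. First I would record the action of each elementary generator on $x_0$ and $x_1$ at the level of the leading coefficients: $\upsilon(a,b)$ scales $x_0$ by $a$ and $x_1$ by $b$; $\tau(a,k)$ adds $a x_k$ to $x_0$ and fixes $x_1$; $\sigma(b,k)$ adds $b x_k$ to $x_1$ and fixes $x_0$. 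Since $b_0=0$, the target is $f(x_0)=a_0x_0+\sum_{i\ge 1}a_i x_i$ and $f(x_1)=b_1x_1+\sum_{i\ge 2}b_i x_i$.

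Next I would compose from the inside out. Starting from $x_0,x_1$, applying the block $\sigma(b_2/b_1,2)\circ\cdots\circ\sigma(b_n/b_1,n)$ — I would use $b_1$ in the denominators rather than $b_0$, correcting the evident typo in the displayed formula — turns $x_1$ into $x_1+\sum_{i\ge 2}(b_i/b_1)x_i$ while leaving $x_0$ unchanged (each $\sigma$ only touches the $x_1$-row, and the higher basis vectors $x_i$ for $i\ge 2$ are themselves modified in a triangular way, but because $b_0=0$ there is no feedback into the $x_0$-component). Then the block $\tau(a_1/a_0,1)\circ\cdots\circ\tau(a_n/a_0,n)$ turns $x_0$ into $x_0+\sum_{i\ge 1}(a_i/a_0)x_i$ and leaves the current $x_1$ essentially fixed (again up to the triangular redefinition of the $x_i$, which does not disturb the leading structure). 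Finally $\upsilon(a_0,b_1)$ rescales, producing $a_0x_0+\sum_{i\ge 1}a_i x_i=f(x_0)$ and $b_1x_1+\sum_{i\ge2}b_ix_i=f(x_1)$, as required; the $x_i$ for $i\ge 2$ then match automatically since both maps are defined by the same recursion $x_i\mapsto[x_0',x_{i-1}']$.

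The one point that needs genuine care — and which I expect to be the main obstacle — is keeping track of how the elementary changes interact when they are composed, because each $\sigma$ and $\tau$ redefines $x_2,\dots,x_n$ via the bracket, so "adding $bx_k$" in a later factor means adding $b$ times the already-modified $x_k$. I would handle this by working modulo the span of $\{x_{k+1},\dots,x_n\}$ at each stage: because the Hom-Lie algebra is filiform, $[x_0,x_k]\equiv x_{k+1}$ modulo higher terms, so the elementary changes act triangularly on the ordered basis, and one shows by descending induction on $k$ that after applying the $\sigma$-block the coefficient of $x_k$ in the new $x_1$ is exactly $b_k/b_1$, and similarly for the $\tau$-block. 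This triangularity is precisely what makes the composition collapse to $f$. I would also remark that the filiform condition (rather than mere nilpotency) is what guarantees the leading coefficient $a_0^{i-2}(a_0b_1-a_1b_0)$ in $f(x_i)$ is nonzero, so that the whole scheme stays inside $GL_{ad}(\mathfrak{g})$, and that the lower-triangularity of the structure map $\alpha$ is preserved at every step because each elementary change is itself lower triangular in the adapted basis.
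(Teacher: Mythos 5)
Your proposal is correct and follows essentially the same route as the paper, whose entire proof is the single line ``Direct calculations, see \cite{9}''; you simply carry out the outline of that direct calculation (reduction to the images of $x_0$ and $x_1$, triangular bookkeeping of the redefined $x_i$, final rescaling by $\upsilon(a_0,b_1)$). You also correctly identify that the denominators $b_0$ in the displayed $\sigma$-factors must be a typo for $b_1$, since the preceding proposition forces $b_0=0$.
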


\begin{proof}
Direct calculations, see \cite{9}.
\end{proof}
\begin{remark}
The decomposition of   an adapted basis change is not unique. For example, some products commute.
 We may  have
\begin{align*}
f  &  =\tau\left(  a_{n},n\right)  \circ\tau\left(  a_{n-1},n-1\right)
\circ\cdots \circ\tau\left(  a_{1},1\right) \\
&  \circ\sigma\left(  b_{n},n\right)  \circ\sigma\left(  b_{n-1},n-1\right)
\circ\cdots \circ\sigma\left(  b_{2},2\right)  \allowbreak\circ\upsilon\left(
a_{0},b_{1}\right)
\end{align*}
or
\begin{align*}
f  &  =\upsilon\left(  a_{0},b_{1}\right)  \circ\tau\left(  a_{n}%
/a_{0},n\right)  \circ\tau\left(  a_{n-1}/a_{0},n-1\right)  \circ
\cdots \circ\ \tau\left(  a_{2}/a_{0},2\right) \\
&  \circ\sigma\left(  b_{n}/b_{1},n\right)  \circ\sigma\left(  b_{n-1}%
/b_{0},n-1\right)  \circ\cdots \circ\ \sigma\left(  b_{2}/b_{0},2\right)
\circ\ \tau\left(  a_{1}/a_{0},1\right)
\end{align*}
or
\begin{align*}
f  &  =\tau\left(  a_{n},n\right)  \circ\tau\left(  a_{n-1},n-1\right)
\circ\cdots \circ\tau\left(  a_{2},2\right) \\
&  \circ\sigma\left(  b_{n},n\right)  \circ\sigma\left(  b_{n-1},n-1\right)
\circ\cdots \circ\sigma\left(  b_{2},2\right)  \allowbreak\circ\tau\left(
a_{1},1\right)  \circ\upsilon\left(  a_{0},b_{1}\right).
\end{align*}

\end{remark}
\subsection{ Classification Method for $m$-dimensional  Filiform Hom-Lie algebras }
Here, we describe  classification's method for  $m$-dimensional filiform
Hom-Lie algebra based on  adapted bases. 
In order to provide the classification of  filiform Hom-Lie algebras of dimension $m$ $\leq7$, we  apply the following algorithm. 
\begin{enumerate}
\item Fix the form of $\psi$ and solve conditions \eqref{psi0psi1},\eqref{deltaPsi1},\eqref{psiOpsi2}.
\item  Consider the regular case,
where the first free elements (non-depending on the others elements) in $\psi$ are
non-zero.
\item Using the elementary basis change $v(a,b$),  transform
two (one in the homogeneous case) of non-zero coefficients to 1.
\item  Apply successively  elementary basis changes and eliminate the most
possible parameters. The elimination of parameters should keep 
that eliminated parameters equal  to  zero in the next basis changes.
\item Use all possible basis changes which conserve the
parameters transformed into 1 or 0.
\item  Write the new matrix of the
linear map associated to the Hom-Lie algebra after every basis change. We
then obtain the final structure map matrix of the simplified Hom-Lie algebra.
\item Steps
(3) and  (4) can be permuted.
\end{enumerate}

In the sequel, we apply these steps, first to  $m$-dimensional filiform Hom-Lie algebras with $m\leq 5$ and then successively  to dimension $6$ and $7$.

\section{Filiform Hom-Lie algebras of dimension $\leq5$}

Any filiform Hom-Lie algebra bracket  of dimension less or equal to $5$ is
isomorphic to $\mu_{0}+\psi,\ $where $\mu_{0}$ is the bracket  of $L_{n} 
$, defined in \ref{Ln},  $(n=2,3,4)\ $and,%
\begin{align*}
\psi &  =0\ \ \text{if }n=2\text{ or }n=3\\
\psi &  =a_{14}\psi_{14}\text{ if }n=4.
\end{align*}
Conversely, for an arbitrary choice of parameters $\{a_{k,r}\},$ $\mu_{0}+\psi$
is a Hom-Lie algebra bracket.

\begin{theorem}[Classification]
 Every $n$-dimensional filiform Hom-Lie algebra with $n\leq 5$  is
isomorphic to one of the following pairwise non isomorphic Hom-Lie algebras $(\mathfrak{g},[\cdot,\cdot], \alpha)$, where the bracket is defined by the bracket $\mu_{m}=\mu_{0}+\psi $ and the linear map $\alpha$ is given,  with respect to the same basis,  by any lower  triangular  matrix with
arbitrary elements.
 
\noindent \emph{Dimension 3}: 
$\blacktriangleright$ $\mu_{3}^{1}:\mu_{0}$. 
\newline\emph{Dimension 4}:
$\blacktriangleright$ $\mu_{4}^{1}:\ \mu_{0} $.
 \newline\emph{Dimension 5}:

\begin{itemize}
\item[$\blacktriangleright$] $\mu_{5}^{1}:\mu_{0}\ $,  
\item[$\blacktriangleright$] $\mu_{5}^{2}:\mu_{0}+\psi_{1,4}$.
\end{itemize}
\end{theorem}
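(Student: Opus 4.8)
The plan is to carry out the classification dimension by dimension, reducing each bracket $\mu = \mu_0 + \psi$ to a normal form using the elementary basis changes $\sigma(b,k)$, $\tau(a,k)$, $\upsilon(a,b)$ introduced above, and then checking that the resulting list is non-redundant. The preamble to the theorem already establishes the crucial structural facts: every filiform Hom-Lie algebra of dimension $n+1 \le 5$ has bracket isomorphic to $\mu_0 + \psi$ with $\psi = 0$ when $n = 2,3$ and $\psi = a_{1,4}\psi_{1,4}$ when $n = 4$ (this is forced because $\Delta$ is empty for $n \le 3$, and $\Delta = \{(1,4)\}$ for $n = 4$, using $2k+1 < r \le n$). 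So the only content is (a) the $n = 4$ case, where I must show $a_{1,4} \ne 0$ and $a_{1,4} = 0$ give the two non-isomorphic algebras $\mu_5^2$ and $\mu_5^1$, and (b) showing $\mu_5^1$ and $\mu_5^2$ really are non-isomorphic.

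First I would dispose of dimensions $3$ and $4$: here $\psi = 0$ identically, so $\mu = \mu_0$ is the unique filiform bracket, giving $\mu_3^1$ and $\mu_4^1$ and nothing to prove beyond invoking the description of $\Delta$. Next, for dimension $5$ ($n = 4$), I would write $\psi = a_{1,4}\psi_{1,4}$ and split into the regular case $a_{1,4} \ne 0$ and the degenerate case $a_{1,4} = 0$. In the degenerate case the bracket is $\mu_0$, giving $\mu_5^1$. In the regular case, I would apply an elementary basis change of Type 3, $\upsilon(a,b)$, and compute how it rescales the coefficient $a_{1,4}$: since $\psi_{1,4}(x_i,x_j) = (-1)^{1-j}C_{j-2}^{1-i}x_{i+j+1}$ is supported on $(i,j) = (1, 3)$ with value $x_5$, the rescaling $x_0' = ax_0$, $x_1' = bx_1$ propagates through $x_i' = [x_0', x_{i-1}']$ to give $\psi'_{1,4}$ scaled by an explicit monomial in $a, b$; choosing $a, b$ appropriately normalizes $a_{1,4}$ to $1$, yielding $\mu_5^2 = \mu_0 + \psi_{1,4}$. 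I should also check that no Type 1 or Type 2 change, nor any other adapted change, can kill $a_{1,4}$ — this follows because $\psi_{1,4}$ lands in $C^3(\mathfrak{g})$ which is one-dimensional and preserved, so the "leading coefficient" $a_{1,4}$ is a genuine isomorphism invariant up to scalars.

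The final and genuinely substantive step is showing $\mu_5^1 \not\cong \mu_5^2$. Here I would argue that $\mu_5^1 = \mu_0$ has derived algebra $[\mathfrak{g},\mathfrak{g}]$ of a certain dimension, while for $\mu_5^2$ the extra cocycle $\psi_{1,4}$ forces $[x_1, x_3] = x_5$ to be nonzero, changing the dimension of $C^2(\mathfrak{g})$ or of $[\mathfrak{g},\mathfrak{g}]$; equivalently, the rank of the bracket (as a map $\Lambda^2\mathfrak{g} \to \mathfrak{g}$) differs. Concretely, for $\mu_0$ on $L_4$ one computes $\dim[\mathfrak{g},\mathfrak{g}] = 3$ (spanned by $x_2, x_3, x_4$), whereas for $\mu_0 + \psi_{1,4}$ one still gets $x_2,x_3,x_4$ but the bracket structure differs in a way detectable by, e.g., the dimension of the centralizer of $[\mathfrak{g},\mathfrak{g}]$, or by noting $x_4 \in C^3$ for $\mu_5^2$ but the map $\mathrm{ad}\,x_1$ has different rank. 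I expect the main obstacle to be pinning down exactly which numerical invariant cleanly separates the two — one must verify that $\psi_{1,4}$ cannot be absorbed by a basis change, which reduces to the one-dimensionality of the relevant cocycle space modulo coboundaries together with the scaling computation above; since no coboundary can produce a bracket valued in $C^3$ with nonzero $x_1,x_3$ component (coboundaries of $L_4$ being computed from $\delta^1$), the two classes are distinct. The verification that the elementary changes preserve lower-triangularity of $\alpha$ is immediate from their triangular action on the basis, as already noted in the proof of the adapted-basis proposition.
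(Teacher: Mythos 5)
Your overall route---reduce to $\mu_0+\psi$ with $\psi$ supported on $\Delta$, normalize the single coefficient $a_{1,4}$ by $\upsilon(a,b)$ using $b_{1,4}=b\,a_{1,4}/a^{2}$, and then separate $\mu_5^1$ from $\mu_5^2$ by an invariant of the bracket---is exactly the method of Section~4 that the paper invokes when it writes ``Straightforward,'' so there is no divergence of approach. However, there is a concrete computational error that, taken literally, makes your argument collapse: in dimension $5$ one has $n=4$ and the basis is $x_0,\dots,x_4$, and from \eqref{PsiKR} with $(k,r)=(1,4)$ the only surviving value is $\psi_{1,4}(x_1,x_2)=(-1)^{1-2}C_{0}^{0}\,x_{1+2+4-2-1}=-x_4$; the pair $(i,j)=(1,3)$ that you name would land on $x_5$, which does not exist in a $5$-dimensional algebra (and $C_{1}^{0}x_5$ is simply discarded). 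If $\psi_{1,4}$ were supported where you say, it would be identically zero and $\mu_5^2$ would coincide with $\mu_5^1$. Relatedly, your claim that $\Delta$ is empty for $n\le 3$ contradicts the paper's definition: for $n=3$ (odd) $\Delta$ contains $((n-1)/2,n)=(1,3)$, and one must argue (by an adapted basis change) that $\mu_0+a\,\psi_{1,3}\cong\mu_0$ rather than that there is nothing to absorb.

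The second soft spot is the non-isomorphism of $\mu_5^1$ and $\mu_5^2$, where you hedge among several candidate invariants, two of which do not work: the derived algebra is $\langle x_2,x_3,x_4\rangle$ in both cases, and the rank of the bracket as a map $\Lambda^2\mathfrak{g}\to\mathfrak{g}$ is $3$ in both cases. The invariant you mention that does work is the centralizer of $C^1(\mathfrak{g})$: for $\mu_0$ it is $\langle x_1,x_2,x_3,x_4\rangle$ (dimension $4$), while for $\mu_0+\psi_{1,4}$ the relation $[x_1,x_2]=-x_4$ removes $x_1$, leaving dimension $3$; since any Hom-Lie isomorphism is in particular a bracket isomorphism, this settles the matter, but you should commit to this computation rather than leave it open. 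Finally, note that the theorem also asserts that $\alpha$ may be an arbitrary lower triangular matrix; this requires checking that conditions \eqref{deltaPsi1} and \eqref{psiOpsi2} impose no constraint on $\rho_{ij}$ when $\psi=\psi_{1,4}$ in dimension $5$ (they do not, because $\psi_{1,4}$ takes values in $\langle x_4\rangle$ and vanishes on all arguments reachable from a lower triangular $\alpha$), a point your proposal does not address.
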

\begin{proof}Straightforward.
\end{proof}

\section{Filiform Hom-Lie algebras of dimension $6$}

Any filiform Hom-Lie algebra bracket of dimension $6$ is isomorphic to
$\mu_{0}+\psi,\ $where $\mu_{0}$ is the bracket of $L_{5}$ and,%
\[
\ \psi=a_{1,4}\psi_{1,4}+a_{1,5}\psi_{1,5}+a_{2,5}\psi_{2,5}.
\]

We give conditions that make the bracket $\mu=\mu_{0}+\psi$ and the structure
map $\alpha$ \ characterizing a filiform Hom-Lie algebra.

Condition $\delta^{2}\psi=0$   is equivalent
to $\rho_{11}C_{1}^{1}=0;\ \rho_{23}+\rho_{12}=0;\ \rho_{00}%
=\rho_{11}C_{1}^{0}$ and $\psi\circ\psi=0$ is equivalent to $\rho
_{01}a_{14}a_{25}C_{1}^{1}=0$. Therefore, we have the following cases

\begin{enumerate}
\item Case $C_{1}^{1}=0$. It leads to $\rho_{23}=-\rho_{12};\ \rho
_{00}=\rho_{11}C_{1}^{0}$.

\item Case $C_{1}^{1}\neq0$. Then $\rho_{11}=0$ and $\rho_{00}=0$.
Moreover we have the following three subcases

\begin{enumerate}
\item Case $\rho_{01}=0$,

\item Case $a_{14}=0$,

\item Case $a_{25}=0$.
\end{enumerate}
\end{enumerate}


\subsection{Basis Changes and elimination of parameters}

The use of the basis change $\sigma\left(  b,2\right)  $ gives a new bracket with $\psi=b_{14}\psi_{14}+b_{15}\psi_{15}+b_{25}\psi
_{25}$. The new coefficients 
$B=\{b_{k,m}\}$ satisfy
\begin{align*}
b_{14} &  =a_{14},\ b_{25}=a_{25},\\
b_{15} &  =a_{15}+b^{2}a_{25}+b\left(  C_{1}^{0}-1\right)  a_{14}=:P_{1}(b).
\end{align*}
The use of the basis change $\sigma\left(  b,3\right)  $ gives a new bracket with coefficients 
$B=$ $\{b_{k,m}\}$ such that
\begin{align*}
b_{14} &  =a_{14},\ b_{25}=a_{25},\\
b_{15} &  =a_{15}-ba_{25}\left(  1+C_{1}^{1}\right)  =:P_{2}(b).
\end{align*}
The use of the basis change $\tau\left(  a,1\right)  $ gives a new bracket with coefficients 
$B=$ $\{b_{k,m}\}$ such that
\begin{align*}
b_{14} &  =a_{14},\\
b_{15} &  =a_{15}-aa_{14}^{2}\left(  1+C_{1}^{0}\right)  +a^{2}a_{14}%
^{2}a_{25}C_{1}^{1}=:P_{3}(a),\\
b_{25} &  =\frac{a_{25}}{1-aa_{25}C_{1}^{1}}.%
\end{align*}
The use of the basis change $\tau\left(  a,2\right)  $ gives a new bracket with coefficients 
$B=$ $\{b_{k,m}\}$ such that
\begin{align*}
b_{14} &  =a_{14},\ b_{25}=a_{25},\\
b_{15} &  =a_{15}+a\ a_{14}a_{25}C_{1}^{1}-a\ a_{14}a_{25}=:P_{4}(a).
\end{align*}
The use of the basis change $\nu\left(  a,b\right)  $ gives a new bracket with coefficients 
$B=$ $\{b_{k,m}\}\ $ such that
\[
b_{k,m}=b\frac{a_{k,m}}{a^{m-2k}}.%
\]

\begin{remark}
In the previous basis changes, one may reduce $a_{15}$ to zero if the
corresponding polynomials $P_1,\cdots , P_4$ admit a root.
\end{remark}

\subsection{Structure map and basis changes}

We illustrate the impact of  basis changes on the matrix of the structure map
$\alpha$. Consider the bracket  $\mu=\mu_{0}+a_{14}\psi_{14}+a_{15}\psi_{15}+a_{25}\psi
_{25} ,$ with $ C_{1}^{1}=0$  and the structure  map $\rho$ such that $\rho(x_{i})=\sum
\limits_{j\geq i}\rho_{ij}x_{j}$ and satisfying  $$\rho_{11}C_{1}^{0}=\rho_{00},\ \rho
_{11}C_{1}^{1}=0\ ,\rho_{23}+\rho_{12}=0.$$

Using $\nu(a_{1},b_{1})$ and then $\sigma(b,3)$ with $b=a_{15}a_{25}%
/a_{14}^{2}$ and $a_{1}=\frac{a_{15}}{a_{25}},\ b_{1}=\frac{a_{14}}{a_{25}%
^{2}}$, we obtain that $\mu\cong\mu_{0}+\psi_{14}+\psi_{25}\ $ and the structure
map $\rho$ is
 defined with respect to the new basis as
\begin{align*}
&
\begin{array}
[c]{cc}%
\rho\left(  x_{0}^{{\prime}}\right)  = & \rho_{00}x_{0}^{\prime
}+\dfrac{a_{1}}{b_{1}}\rho_{01}x_{1}^{{\prime}}+\dfrac{\rho_{02}}{b_{1}}%
x_{2}^{{\prime}}+\dfrac{\rho_{03}}{a_{1}b_{1}}x_{3}^{{\prime}}+\dfrac
{\rho_{04}}{a_{1}^{2}b_{1}}x_{4}^{{\prime}}+\dfrac{\rho_{05}}{a_{1}^{3}b_{1}%
}x_{5}^{{\prime}}%
\end{array}
\\
&
\begin{array}
[c]{cc}%
\rho\left(  x_{1}^{{\prime}}\right)  = & \rho_{11}x_{1}^{{\prime}%
}+\dfrac{\rho_{12}}{a_{1}}x_{2}^{{\prime}}+\dfrac{\rho_{13}}{a_{1}^{2}}%
x_{3}^{{\prime}}+\dfrac{\rho_{14}}{a_{1}^{3}b_{1}}x_{4}^{{\prime}}%
+\dfrac{\rho_{15}}{a_{1}^{4}}x_{5}^{{\prime}}%
\end{array}
\\
&
\begin{array}
[c]{cc}%
\rho\left(  x_{2}^{{\prime}}\right)  = & \rho_{22}x_{2}^{{\prime}%
}+\dfrac{\rho_{23}}{a_{1}}x_{3}^{{\prime}}+\dfrac{\rho_{24}}{a_{1}^{2}}%
x_{4}^{{\prime}}+\dfrac{\rho_{25}}{a_{1}^{3}}x_{5}^{{\prime}}%
\end{array}
\\
&
\begin{array}
[c]{cc}%
\rho\left(  x_{3}^{{\prime}}\right)  = & \rho_{33}x_{3}^{{\prime}%
}+\dfrac{\rho_{34}}{a_{1}}x_{4}+\dfrac{\rho_{35}}{a_{1}^{2}}x_{5}^{{\prime}}%
\end{array}
\\
&
\begin{array}
[c]{cc}%
\rho\left(  x_{4}^{{\prime}}\right)  = & \rho_{44}x_{4}^{{\prime}%
}+\dfrac{\rho_{45}}{a_{1}}x_{5}^{{\prime}}%
\end{array}
\\
&
\begin{array}
[c]{cc}%
\rho\left(  x_{5}^{\prime}\right)  = & \rho_{55}x_{5}^{\prime}.%
\end{array}
\end{align*}
We denote the entries of the matrix  of $\rho$ with respect to the basis $\{x_{i}^{\prime}\}_{i=0,\cdots , 5}$ by $\rho_{ij}^{\prime}$. 
Therefore, we use $\sigma(b,3)$ and finally the structure map $\rho$  is given with respect to the new basis $\{x_{i}^{\prime\prime}\}_{i=0,\cdots , 5}$  by 
\begin{align*}
\rho\left(  x_{0}^{\prime\prime}\right)   &  =\rho_{00}^{\prime}x_{0}^{\prime\prime}+\rho
_{01}^{\prime}x_{1}^{\prime\prime}+\rho_{02}^{\prime}x_{2}^{\prime\prime}+\left(  \rho_{03}^{\prime
}-b\rho_{01}^{\prime}\right)  x_{3}^{\prime\prime}+\left(  \rho_{04}^{\prime}-b\rho
_{02}^{\prime}\right)  x_{4}^{\prime\prime}\\
&  +\left(  \rho_{05}^{\prime}-b\rho_{03}^{\prime}+b^{2}\rho_{01}^{\prime
}\right)  x_{5}^{\prime\prime}\\
\rho\left(  x_{1}^{\prime\prime}\right)   &  =\rho_{11}^{\prime}x_{1}%
^{\prime\prime}+\left(  \rho_{22}^{\prime}b+\rho_{12}^{\prime}\right)  x_{2}^{\prime\prime}+\left(
\rho_{13}^{\prime}-b\rho_{11}^{\prime}+b\rho_{23}^{\prime}\right)  x_{3}^{\prime\prime}\\
&  +\left(  \rho_{14}^{\prime}-b\rho_{12}^{\prime}-b^{2}\rho_{22}^{\prime
}+b\rho_{24}^{\prime}\right)  x_{4}^{\prime\prime}\\
&  +\left(  \rho_{15}^{\prime}-b\rho_{13}^{\prime}+b^{2}\rho_{11}^{\prime
}-b^{2}\rho_{23}^{\prime}+b^{2}\rho_{25}^{\prime}\right)  x_{5}^{\prime\prime}\\
\rho^{\prime\prime}\left(  x_{2}^{\prime\prime}\right)   &  =\rho_{22}^{\prime}x_{2}%
^{\prime\prime}+\left(  \rho_{33}^{\prime}b+\rho_{22}^{\prime}\right)  x_{3}^{\prime\prime}+\left(
\rho_{24}^{\prime}-b\rho_{22}^{\prime}+b\rho_{34}^{\prime}\right)  x_{4}^{\prime\prime}\\
&  +\left(  \rho_{25}^{\prime}-b\rho_{23}^{\prime}-b^{2}\rho_{33}^{\prime
}+b\rho_{35}^{\prime}\right)  x_{5}^{\prime\prime}\\
\rho\left(  x_{3}^{\prime\prime}\right)   &  =\rho_{33}^{\prime}x_{3}%
^{\prime\prime}+\left(  \rho_{44}^{\prime}b+\rho_{34}^{\prime}\right)  x_{4}^{\prime\prime}+\left(
\rho_{35}^{\prime}-b\rho_{33}^{\prime}+b\rho_{45}^{\prime}\right)  x_{5}^{\prime\prime}\\
\rho\left(  x_{4}^{\prime\prime}\right)   &  =\rho_{44}^{\prime}x_{4}%
^{"}+\rho_{45}^{\prime}x_{5}^{"}\\
\rho\left(  x_{5}^{\prime\prime}\right)   &  =\rho_{55}^{\prime}x_{5}^{\prime\prime}.%
\end{align*}

\subsection{Classification}

\begin{theorem}[6-dimensional Classification]
Every $6$-dimensional filiform Hom-Lie algebra is isomorphic to one of the
following pairwise non isomorphic Hom-Lie algebras $(\mathfrak{g},[\cdot
,\cdot],\alpha)$, where the bracket is defined by the multiplication $\mu
_{6}^{i}=\mu_{0}+\psi_{6}^{i}\ $and the linear map $\alpha$ is given by its
corresponding matrix with respect to the same basis:

%

\begin{itemize}

\item[$\blacktriangleright$] $\mu_{6}^{1}:\ \mu_{0},\ $with a structure map $\rho$ given by any lower
triangular matrix.

\item[$\blacktriangleright$] $\mu_{6}^{2}:\ \mu_{0}+\psi_{14}+\psi_{25},$ with a structure map $\rho$ given
by  matrices of the form
\[
\left(
\begin{array}
[c]{cccccc}%
\rho_{00} & 0 & 0 & 0 & 0 & 0\\
\rho_{01} & \rho_{11} & 0 & 0 & 0 & 0\\
\rho_{02} & \rho_{12} & \rho_{22} & 0 & 0 & 0\\
\rho_{03} & \rho_{13} & \rho_{00}-C_{1}^{1}\rho_{01}-C_{1}^{0}\rho_{11}%
-\rho_{12} & C_{1}^{1}\rho_{11} & 0 & 0\\
\rho_{04} & \rho_{14} & \rho_{24} & \rho_{34} & \rho_{44} & 0\\
\rho_{05} & \rho_{15} & \rho_{25} & \rho_{35} & \rho_{45} & \rho_{55}%
\end{array}
\right).
\]
\item[$\blacktriangleright$] $\mu_{6}^{3}:\ \mu_{0}+\psi_{14}$ , with $\ C_{1}^{0}\neq0,$ and the structure
map $\rho$ given by matrices of the form
\[
\left(
\begin{array}
[c]{cccccc}%
\rho_{00} & 0 & 0 & 0 & 0 & 0\\
\rho_{01} & \rho_{00}/C_{1}^{0} & 0 & 0 & 0 & 0\\
\rho_{02} & \rho_{12} & \rho_{22} & 0 & 0 & 0\\
\rho_{03} & \rho_{13} & \rho_{23} & \rho_{33} & 0 & 0\\
\rho_{04} & \rho_{14} & \rho_{24} & \rho_{34} & \rho_{44} & 0\\
\rho_{05} & \rho_{15} & \rho_{25} & \rho_{35} & \rho_{45} & \rho_{55}%
\end{array}
\right).
\]
\item[$\blacktriangleright$] $\mu_{6}^{4}:\ \mu_{0}+\psi_{15},$ with the structure map $\rho$ given by any
lower triangular matrix.
\end{itemize}

\end{theorem}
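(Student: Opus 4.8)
The plan is to carry out the Vergne--Khakimdjanov procedure (the algorithm of Section~4) explicitly in dimension~$6$. Starting from $\mu=\mu_0+\psi$ with $\psi=a_{1,4}\psi_{1,4}+a_{1,5}\psi_{1,5}+a_{2,5}\psi_{2,5}$, the first step is to impose the Hom-Jacobi identity. By the reduction established above, this is equivalent to $\delta^2\psi=0$ together with $\psi\circ\psi=0$; evaluating these on the basis triples $(x_i,x_j,x_k)$ as in Cases~1 and~2 of Section~4 produces the linear relations $\rho_{11}C_1^1=0$, $\rho_{23}+\rho_{12}=0$, $\rho_{00}=\rho_{11}C_1^0$ among the entries of the structure map together with the single quadratic relation $\rho_{01}a_{1,4}a_{2,5}C_1^1=0$ among the deformation parameters. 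This gives the split into the case $C_1^1=0$ and the case $C_1^1\neq0$, the latter forcing $\rho_{00}=\rho_{11}=0$ and one of the subcases $\rho_{01}=0$, $a_{1,4}=0$, $a_{2,5}=0$.

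Next I would normalize the cochain $\psi$. In the regular situation the first free coefficients --- $a_{1,4}$, and $a_{2,5}$ where it occurs --- are nonzero, so the basis change $\upsilon(a,b)$ rescales them to $1$. The transformation laws computed in this section for $\sigma(b,2)$, $\sigma(b,3)$, $\tau(a,1)$ and $\tau(a,2)$ each fix $a_{1,4}$ and $a_{2,5}$ while sending $a_{1,5}$ to a polynomial $P_i$ of the basis-change parameter; since $\K$ is algebraically closed, whenever one of the $P_i$ is nonconstant it has a root and $a_{1,5}$ can be set to $0$, the degenerate possibility that all the $P_i$ collapse to the constant $a_{1,5}$ being handled by hand and accounting for the family $\mu_0+\psi_{1,5}$. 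Running this through the subcases leaves precisely the four cochains $\psi=0$, $\psi=\psi_{1,4}+\psi_{2,5}$, $\psi=\psi_{1,4}$ (with $C_1^0\neq0$), and $\psi=\psi_{1,5}$.

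For each normal form I would then re-examine the cocycle and Jacobi conditions with the surviving $a_{k,r}$ substituted, and push the composite basis change used in the normalization through the matrix of $\alpha$, using the explicit formulas of the subsection on the structure map and basis changes. This yields the structure-map matrices asserted in the statement --- an arbitrary lower-triangular matrix for $\mu_6^1$ and $\mu_6^4$, and the two constrained shapes for $\mu_6^2$ and $\mu_6^3$ --- after which one applies the residual basis changes fixing the already normalized coefficients (step~(5) of the algorithm) to check that no further entry of $\alpha$ can be cleared. Finally, to see the four algebras are pairwise non-isomorphic I would note that any Hom-Lie isomorphism between two of them restricts to an adapted basis change of the underlying model filiform algebra, hence acts on the cochain space compatibly with the filtration that indexes the $\psi_{k,r}$ by the weight $r-2k-1$; one then verifies that $0$, $\psi_{1,4}+\psi_{2,5}$, $\psi_{1,4}$ and $\psi_{1,5}$ lie in four distinct orbits --- for example $\psi_{1,5}$, of weight $2$, cannot be produced from $\psi_{1,4}$, of weight $1$, because the basis-change action on the coefficient of $\psi_{1,5}$ involves that of $\psi_{1,4}$ only quadratically --- and the constrained versus free shape of the $\alpha$-matrix separates the remaining pairs.

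The step I expect to be the main obstacle is not any single computation but the bookkeeping across the whole normalization: one must ensure each elimination of a parameter is compatible with those made before it (a coefficient driven to $0$ or $1$ must remain so under the later basis changes), that the degenerate subcases in which the polynomials $P_i$ fall below their generic degree are genuinely exhausted and not tacitly discarded, and that the matrix of $\alpha$ is transcribed correctly after every composite basis change --- precisely the kind of analysis whose incompleteness plagued Umlauf's original classification.
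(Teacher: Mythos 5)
Your proposal follows essentially the same route as the paper's own (sketched) proof: reduce Hom-Jacobi to $\delta^2\psi=0$ and $\psi\circ\psi=0$ to get the relations $\rho_{11}C_1^1=0$, $\rho_{23}+\rho_{12}=0$, $\rho_{00}=\rho_{11}C_1^0$ and $\rho_{01}a_{14}a_{25}C_1^1=0$, then normalize $\psi$ case by case with the elementary changes $\sigma$, $\tau$, $\upsilon$ (killing $a_{1,5}$ via roots of the polynomials $P_i$ over the algebraically closed field) and transport the lower-triangular matrix of $\alpha$ through the same changes. Your added remarks on the non-isomorphism of the four normal forms and on the bookkeeping pitfalls go slightly beyond the paper's sketch but do not change the method.
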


\begin{proof}
[ Sketch of proof] We study the following cases.\\ $\bullet$ Case $a_{14}\neq0,\ a_{25}\neq0 $ and$~C_{1}^{1}%
\neq\{-1,0\},$ $\rho_{01}=0.\ $We use $\sigma\left(  b,3\right)  $ and then
$\nu\left(  a,b^{\prime}\right)$. So, we obtain%
\[
\psi_{6}^{1}=\psi_{14}+\psi_{25}.
\]
$\bullet$ Case  $~a_{14}\neq0,\ a_{25}\neq0,\ C_{1}^{1}=0.\ $We use $\sigma\left(
b,3\right)  $ and then $\nu\left(  a,b^{\prime}\right) $. So, we obtain%
\[
\psi_{6}^{2}=\psi_{14}+\psi_{25}.
\]
$\bullet$ Case $~a_{14}\neq0,\ a_{25}\neq0,\ C_{1}^{1}=-1,$ $C_{2}^{0}%
a_{14}=a_{26}C_{1}^{1}.\ $We use $\tau\left(  a,2\right)  $, then $\nu\left(
a^{\prime},b\right)  $. So,  we obtain%
\[
\psi_{6}^{3}=\psi_{14}+\psi_{25}.
\]
$\bullet~$ Case  $a_{14}\neq0,a_{25}=0\ $and$~C_{1}^{0}\neq1$. We use
$\sigma\left(  b,2\right)  $ and then $\nu\left(  a,b^{\prime}\right)  $. So, 
we obtain%
\[
\psi_{6}^{4}=\psi_{14}.
\]
$\bullet~$ Case  $a_{14}\neq0,a_{25}=0\ $and$~C_{1}^{0}=1$.\ We use $\tau\left(
a,2\right)  $ and then $\nu\left(  a^{\prime},b\right)  $. So,  we obtain%
\[
\psi_{6}^{5}=\psi_{14}.
\]
$\bullet$ Case  $a_{14}=0,a_{25}\neq0,a_{15}=0,a_{16}\neq0.$ We use
$\sigma\left(  b,2\right)  $ and $\nu\left(  a,b^{\prime}\right)  $. So,  we
obtain%
\[
\psi_{6}^{6}=\psi_{25}.
\]
$\bullet$ Case  $a_{14}=0 , a_{25}=0, a_{15}\neq0.\ $We use $\nu\left(
a,b\right) $. So,  we obtain%
\[
\psi_{6}^{7}=\psi_{15}.
\]
$\bullet$ Case  $a_{14}=0,\ a_{26}\neq0,a_{15}\neq0,\ C_{1}^{1}=-1.\ $We
use $\tau\left(  a,1\right)  $ and $\nu\left(  a^{\prime},b^{\prime}\right)  $. So, we obtain%
\[
\psi^{8}=\psi_{15}+\psi_{26}.
\]
$\bullet$ Case  $a_{14}=0,\ a_{26}=0,\ a_{15}\neq0,C_{1}^{0}\neq1.~$we
use $\sigma\left(  b,2\right)  $ and $\nu\left(  a^{\prime},b^{\prime}\right)  $. So,  we obtain%
\[
\psi^{9}=\psi_{15}.
\]

\end{proof}


\section{Filiform Hom-Lie algebras of dimension $7$}

Any 7-dimensional filiform Hom-Lie algebra bracket  is given by  $\mu=\mu_{0}+\psi
,\ $where $\mu_{0}$ is the bracket of $L_{6}$ is defined in \eqref{Ln} and 
\[
\psi=a_{1,4}\psi_{1,4}+a_{1,5}\psi_{1,5}+a_{1,6}\psi_{1,6}\newline+a_{2,6}%
\psi_{2,6}.
\]
In this case, $\delta^{2}\psi=0$  is
equivalent to
\[
\rho_{33}=\rho_{11}C_{1}^{1},\ \rho_{00}=\rho_{11}C_{1}^{0}%
,\ \rho_{00}C_{1}^{0}=\rho_{11}C_{2}^{0},
\]
where $\rho_{ij}$ are the entries of the lower triangular matrix corresponding to  the structure map $\alpha$.\\
This cocycle condition may be written as
\[
\rho_{33}=\rho_{11}C_{1}^{1};\ \rho_{00}=\rho_{11}C_{1}^{0}%
;\ \rho_{11}((C_{1}^{0})^{2}-C_{2}^{0})=0.
\]
Therefore, we have the following solutions $\rho_{11}=0$ or $C_{2}^{0}=(C_{1}^{0})^{2}$, together
with $\rho_{33}=\rho_{11}C_{1}^{1}$ and $\rho_{00}=\rho_{11}C_{1}^{0}$.

The condition $\psi\circ\psi=0$ is equivalent to $\rho_{01}a_{14}\left(
a_{14}C_{2}^{0}-a_{26}C_{1}^{1}\right)  =0$, which  leads to these cases :
\begin{enumerate}
\item $a_{14}=0$.

\item $\rho_{01}=0$.

\item $a_{14}C_{2}^{0}-a_{26}C_{1}^{1}=0,$ which is equivalent to $C_{2}%
^{0}=\frac{a_{26}C_{1}^{1}}{a_{14}}$ with $a_{14}\neq0$.
\end{enumerate}

\subsection{ Basis changes and elimination of parameters}

\ 

A 7-dimensional filiform Hom-Lie algebra bracket $\mu\ $ may be written as
\[
\mu_{0}+\psi=\mu_{0}+a_{1,4}\psi_{1,4}+a_{1,5}\psi_{1,5}+a_{1,6}\psi
_{1,6}+a_{2,6}\psi_{2,6}.
\]

The use of the basis change $\sigma\left(  b,2\right)  $ gives a  new bracket with $\psi=b_{1,4}\psi_{1,4}+b_{1,5}\psi_{1,5}+b_{1,6}\psi_{1,6}+b_{2,6}
\psi_{2,6}$. The coefficients 
$B=$ $\{b_{k,m}\}$ satisfy
\begin{align*}
b_{14} &  =a_{14},\ b_{26}=a_{26},\\
b_{15} &  =a_{15}+ba_{14}\left(  C_{1}^{0}-1\right)  =:P_{1}(b),\\
b_{16} &  =a_{14}b^{2}+a_{16}+ba_{15}C_{1}^{0}+b^{2}a_{26}-ba_{15}-b^{2}%
a_{14}C_{1}^{0}=:Q_{1}(b).
\end{align*}
The use of the basis change $\sigma\left(  b,3\right)  $ gives a new bracket with coefficients 
$B=$ $\{b_{k,m}\}$  such that 
\begin{align*}
b_{14} &  =a_{14},\ b_{26}=a_{26},\\
b_{15} &  =a_{15},\\
b_{16} &  =-ba_{14}+a_{16}-ba_{26}\left(  1+C_{1}^{1}\right)  =:Q_{2}(b).
\end{align*}
The use of the basis change $\tau\left(  a,1\right)  $ gives a new bracket  with coefficients 
$B=$ $\{b_{k,m}\}$  such that 
\begin{align*}
b_{14} &  =a_{14},\\
b_{15} &  =a_{15}-aa_{14}^{2}\left(  1+C_{1}^{0}\right)  =:P_{2}(a),\\
b_{16} &  =a_{16}+a_{14}^{2}a\left(  C_{1}^{0}+1\right)  \left[  \left(
C_{1}^{0}+1\right)  a_{14}a_{14}C_{2}^{0}-a_{26}C_{1}^{1}\right]  \\
&  -a_{14}a\left[  \left(  C_{1}^{0}+1\right)  a_{15}+aC_{2}^{0}a_{14}%
^{2}-aa_{14}a_{26}C_{1}^{1}\right]  \\
&  -aa_{15}\left[  \left(  C_{1}^{0}+1\right)  a_{14}+a_{14}C_{2}^{0}%
-a_{26}C_{1}^{1}\right]  =:Q_{3}(a),\\
b_{26} &  =a_{26}.%
\end{align*}
The use  of the basis change $\tau\left(  a,2\right)  $ gives a new
bracket   with coefficients  $B=$ $\{b_{k,m}\}$  such that 
\begin{align*}
b_{14} &  =a_{14},\ 
b_{15}   =a_{15},\\
b_{16} &  =a_{16}+aa_{14}\left(  1+a_{26}C_{1}^{1}\right)  -a~a_{14}^{2}%
C_{2}^{0}-a\ a_{14}~a_{26}=:Q_{4}(a),\\
b_{26} &  =a_{26}.%
\end{align*}
The use of the basis change $\nu\left(  a,b\right)  $ gives a new bracket   with coefficients 
$B=$ $\{b_{k,m}\}\ $ such that 
\[
b_{k,m}=b\frac{a_{k,m}}{a^{m-2k}}.%
\]

One has also to write, with respect to the new basis, the corresponding matrix structure map $\rho$.
\begin{remark}\
\begin{enumerate}
\item In the previous basis changes, one may reduce $a_{15}$ and/or $a_{16}$
to zero if the corresponding polynomials $P_s$ or $Q_r$, where $s=1,2$ and $r=1,2,3,4$, admit a root.

\item Using the basis change $\nu\left(  a,b\right)  $ may transform two
non-zero coefficients $a_{k,m}$ and $a_{r,t}$ with $m-2k\neq t-2r$ into $1.$

\item If all the non-zero elements of $A=\{a_{i,j}\}_{(i,j)\in\triangle}$ have
the same homogeneity, i.e. $m-2k-1=t-2r-1$ if $a_{k,m}$ $\neq0\ $and
$a_{r,t}\neq0$, then we may transform only one coefficient of $ A$ into $1$.
\end{enumerate}
\end{remark}

\subsection{Classification}


\begin{theorem}[7-dimensional Classification]
Every $7$-dimensional filiform Hom-Lie algebra is isomorphic to one of the
following pairwise non isomorphic Hom-Lie algebra $(\mathfrak{g},[\cdot
,\cdot], \alpha)$, where the bracket is defined by the multiplication $\mu
_{m}^{i}=\mu_{0}+\psi^{i}\ $ and the linear map $\alpha$ is given by its
corresponding matrix with respect to the same basis:

\begin{itemize}

\item[$\blacktriangleright$]  $\mu_{7}^{1}:\ \mu_{0},$ with $\rho$ given by any lower triangular matrix.

\item[$\blacktriangleright$]  $\mu_{7}^{2}:\ \mu_{0}+\psi_{14}+\beta\psi_{26};\ $with $\beta\neq0$ and the
linear map $\rho$ given by matrices of the form
\[
\left(
\begin{array}
[c]{ccccccc}%
C_{1}^{0}\rho_{11} & 0 & 0 & 0 & 0 & 0 & 0\\
\rho_{01} & \rho_{11} & 0 & 0 & 0 & 0 & 0\\
\rho_{02} & \rho_{12} & \rho_{22} & 0 & 0 & 0 & 0\\
\rho_{03} & \rho_{13} & \frac{-\left(  \beta C_{1}^{1}-C_{2}^{0}\right)
\rho_{01}}{\beta}-\rho_{12} & \frac{\left(  \beta C_{1}^{1}-C_{2}^{0}%
+(C_{1}^{0})^{2}\right)  \rho_{11}}{\beta} & 0 & 0 & 0\\
\rho_{04} & \rho_{14} & \rho_{24} & \rho_{34} & \rho_{44} & 0 & 0\\
\rho_{05} & \rho_{15} & \rho_{25} & \rho_{35} & \rho_{45} & \rho_{55} & 0\\
\rho_{06} & \rho_{16} & \rho_{26} & \rho_{36} & \rho_{46} & \rho_{56} &
\rho_{66}%
\end{array}
\right).
\]

\item[$\blacktriangleright$] $\mu_{7}^{3}:\ \mu_{0}+\psi_{26};\ $with the linear map $\rho$ given by matrices of the form
\[
\left(
\begin{array}
[c]{ccccccc}%
\rho_{00} & 0 & 0 & 0 & 0 & 0 & 0\\
\rho_{01} & \rho_{11} & 0 & 0 & 0 & 0 & 0\\
\rho_{02} & \rho_{12} & \rho_{22} & 0 & 0 & 0 & 0\\
\rho_{03} & \rho_{13} & \rho_{23} & C_{1}^{1}\rho_{11} & 0 & 0 & 0\\
\rho_{04} & \rho_{14} & \rho_{34} & \rho_{34} & \rho_{44} & 0 & 0\\
\rho_{05} & \rho_{15} & \rho_{35} & \rho_{35} & \rho_{45} & \rho_{55} & 0\\
\rho_{06} & \rho_{16} & \rho_{36} & \rho_{36} & \rho_{46} & \rho_{56} &
\rho_{66}%
\end{array}
\right).
\]

\item[$\blacktriangleright$] $\mu_{7}^{4}:\ \mu_{0}+\psi_{15}+\psi_{26};\ $with the linear map $\rho$
given by matrices of the form
\[
\left(
\begin{array}
[c]{ccccccc}%
\rho_{00} & 0 & 0 & 0 & 0 & 0 & 0\\
\rho_{01} & \rho_{11} & 0 & 0 & 0 & 0 & 0\\
\rho_{02} & \rho_{12} & \rho_{22} & 0 & 0 & 0 & 0\\
\rho_{03} & \rho_{13} & \rho_{00}+C_{1}^{1}\rho_{11}-\rho_{12} & C_{1}^{1}%
\rho_{11} & 0 & 0 & 0\\
\rho_{04} & \rho_{14} & \rho_{34} & \rho_{34} & \rho_{44} & 0 & 0\\
\rho_{05} & \rho_{15} & \rho_{35} & \rho_{35} & \rho_{45} & \rho_{55} & 0\\
\rho_{06} & \rho_{16} & \rho_{36} & \rho_{36} & \rho_{46} & \rho_{56} &
\rho_{66}%
\end{array}
\right).
\]

\item[$\blacktriangleright$] $\mu_{7}^{5}:\ \mu_{0}+\psi_{15};\ $with the linear map $\rho$ given by matrices of the form
\[
\left(
\begin{array}
[c]{ccccccc}%
C_{1}^{0}\rho_{11} & 0 & 0 & 0 & 0 & 0 & 0\\
\rho_{01} & \rho_{11} & 0 & 0 & 0 & 0 & 0\\
\rho_{02} & \rho_{12} & \rho_{22} & 0 & 0 & 0 & 0\\
\rho_{03} & \rho_{13} & \rho_{23} & \rho_{33} & 0 & 0 & 0\\
\rho_{04} & \rho_{14} & \rho_{34} & \rho_{34} & \rho_{44} & 0 & 0\\
\rho_{05} & \rho_{15} & \rho_{35} & \rho_{35} & \rho_{45} & \rho_{55} & 0\\
\rho_{06} & \rho_{16} & \rho_{36} & \rho_{36} & \rho_{46} & \rho_{56} &
\rho_{66}%
\end{array}
\right).
\]

\item[$\blacktriangleright$] $\mu_{7}^{6}:\mu_{0}+\psi_{16};\ $ with the linear map $\rho$ given by any
lower triangular matrix.

\item[$\blacktriangleright$] $\mu_{7}^{7}:\ \mu_{0}+\psi_{14}+\psi_{16};\ $with the linear map $\rho$
given by matrices of the form
\[
\left(
\begin{array}
[c]{ccccccc}%
0 & 0 & 0 & 0 & 0 & 0 & 0\\
0 & 0 & 0 & 0 & 0 & 0 & 0\\
\rho_{02} & \rho_{12} & \rho_{22} & 0 & 0 & 0 & 0\\
\rho_{03} & \rho_{13} & \rho_{23} & \rho_{33} & 0 & 0 & 0\\
\rho_{04} & \rho_{14} & \rho_{34} & \rho_{34} & \rho_{44} & 0 & 0\\
\rho_{05} & \rho_{15} & \rho_{35} & \rho_{35} & \rho_{45} & \rho_{55} & 0\\
\rho_{06} & \rho_{16} & \rho_{36} & \rho_{36} & \rho_{46} & \rho_{56} &
\rho_{66}%
\end{array}
\right).
\]

\item[$\blacktriangleright$] $\mu_{7}^{8}:\ \mu_{0}+\psi_{14};\ $with the linear map $\rho$ given by matrices of the form
\[
\left(
\begin{array}
[c]{ccccccc}%
0 & 0 & 0 & 0 & 0 & 0 & 0\\
0 & 0 & 0 & 0 & 0 & 0 & 0\\
\rho_{02} & \rho_{12} & \rho_{22} & 0 & 0 & 0 & 0\\
\rho_{03} & \rho_{13} & \rho_{23} & \rho_{33} & 0 & 0 & 0\\
\rho_{04} & \rho_{14} & \rho_{34} & \rho_{34} & \rho_{44} & 0 & 0\\
\rho_{05} & \rho_{15} & \rho_{35} & \rho_{35} & \rho_{45} & \rho_{55} & 0\\
\rho_{06} & \rho_{16} & \rho_{36} & \rho_{36} & \rho_{46} & \rho_{56} &
\rho_{66}%
\end{array}
\right).
\]
\end{itemize}
\end{theorem}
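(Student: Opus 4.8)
The plan is to run the dimension-$6$ algorithm once more, now with the cochain $\psi=a_{1,4}\psi_{1,4}+a_{1,5}\psi_{1,5}+a_{1,6}\psi_{1,6}+a_{2,6}\psi_{2,6}$. First I would fix the constraints already recorded above: $\delta^{2}\psi=0$ forces $\rho_{33}=\rho_{11}C_{1}^{1}$, $\rho_{00}=\rho_{11}C_{1}^{0}$ and $\rho_{11}((C_{1}^{0})^{2}-C_{2}^{0})=0$, while $\psi\circ\psi=0$ forces $\rho_{01}a_{1,4}(a_{1,4}C_{2}^{0}-a_{2,6}C_{1}^{1})=0$. These split the analysis into the three branches $a_{1,4}=0$; $\rho_{01}=0$; and $a_{1,4}C_{2}^{0}=a_{2,6}C_{1}^{1}$ with $a_{1,4}\neq0$; each is then subdivided according to whether $C_{1}^{0}$ and $C_{1}^{1}$ take the degenerate values at which the normalizing polynomials $P_{1},P_{2},Q_{1},\dots,Q_{4}$ listed above lose a root.

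The organizing principle for the normalization is the homogeneity bookkeeping of the remark: under the Type 3 change $\upsilon(a,b)$ the coefficient of $\psi_{k,r}$ is rescaled by $b\,a^{2k-r}$, so $\psi_{1,4},\psi_{1,5},\psi_{1,6},\psi_{2,6}$ carry weights $2,3,4,2$. In particular $\psi_{1,4}$ and $\psi_{2,6}$ share a weight: one $\upsilon$ can set the leading coefficient of any fixed weight to $1$ and can independently adjust coefficients of distinct weight, but it cannot rescale away the ratio of two same-weight coefficients. In the branch $a_{1,4}\neq0$ I would first use the appropriate $\sigma(b,2)$, $\sigma(b,3)$, $\tau(a,1)$ or $\tau(a,2)$ to kill $a_{1,5}$ and then $a_{1,6}$ via roots of the relevant $P$ and $Q$ polynomials, then apply $\upsilon$ to set $a_{1,4}=1$; the residual coefficient of $\psi_{2,6}$ becomes $1$ in the generic subcase but persists as an essential parameter $\beta\neq0$ in the degenerate subcase $a_{1,4}C_{2}^{0}=a_{2,6}C_{1}^{1}$, giving $\mu_{7}^{2}$. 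In the branch $a_{1,4}=0$ the same reduction applied to $a_{1,5},a_{1,6},a_{2,6}$ yields $\mu_{7}^{3}=\mu_{0}+\psi_{2,6}$, $\mu_{7}^{4}=\mu_{0}+\psi_{1,5}+\psi_{2,6}$, $\mu_{7}^{5}=\mu_{0}+\psi_{1,5}$ and $\mu_{7}^{6}=\mu_{0}+\psi_{1,6}$, while $\mu_{7}^{7}=\mu_{0}+\psi_{1,4}+\psi_{1,6}$ emerges from the subcase of $\rho_{01}=0$ in which the surviving cocycle relations force the top $2\times2$ block of $\rho$ to vanish; together with the trivial cases $\mu_{7}^{1}=\mu_{0}$ and $\mu_{7}^{8}=\mu_{0}+\psi_{1,4}$ this exhausts the list.

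Once a bracket representative is fixed I would re-express the structure map $\rho$ in the new adapted basis: each elementary change acts on the entries $\rho_{ij}$ by the explicit rescaling-and-shift formulas derived in the dimension-$6$ section, which generalize verbatim, and imposing the cocycle relations that survive the normalization then pins down the exact shape of the matrices displayed for $\mu_{7}^{2},\dots,\mu_{7}^{5},\mu_{7}^{7},\mu_{7}^{8}$ --- for instance the $(3,2)$-entry of the matrix attached to $\mu_{7}^{4}$ is forced to equal $\rho_{00}+C_{1}^{1}\rho_{11}-\rho_{12}$ --- while for $\mu_{7}^{1}$ and $\mu_{7}^{6}$ no relation survives and $\rho$ stays an arbitrary lower-triangular matrix. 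For pairwise non-isomorphism I would argue that after reducing both algebras to normal form an isomorphism can be taken to be an adapted change, so the pattern of which $\psi_{k,r}$ survive in the reduced bracket, together with the forced entries of $\rho$, are invariants separating the eight classes, and one further ratio of structure constants fixed by every adapted change shows the parameter $\beta$ in $\mu_{7}^{2}$ is essential. The main obstacle will be the size of the case split and the bookkeeping of which normalizing polynomial retains a root in which subcase --- this is where dimension $7$ genuinely exceeds dimension $6$ --- rather than any new conceptual ingredient, the structure-map computations being long but mechanical given the dimension-$6$ tables.
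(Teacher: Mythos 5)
Your sketch follows essentially the same route as the paper's own proof: the same cocycle conditions $\delta^{2}\psi=0$ and $\psi\circ\psi=0$ with the identical constraints on $\rho_{00},\rho_{33},\rho_{11}$, the same three-way split coming from $\rho_{01}a_{1,4}(a_{1,4}C_{2}^{0}-a_{2,6}C_{1}^{1})=0$, and normalization by the elementary adapted changes $\sigma$, $\tau$, $\upsilon$ via the homogeneity weights --- the paper merely unfolds this into sixteen explicit subcases and records which polynomial $P_s$ or $Q_r$ supplies the needed root in each. One internal slip: since $\psi_{1,4}$ and $\psi_{2,6}$ carry the same weight, your own homogeneity argument shows the $\psi_{2,6}$-coefficient can \emph{never} be rescaled to $1$ once $a_{1,4}$ has been, in any subcase, which is exactly why $\beta$ survives as an essential parameter of $\mu_{7}^{2}$; this is consistent with your closing non-isomorphism remark but contradicts your claim that the coefficient ``becomes $1$ in the generic subcase.''
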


\begin{proof}
[ Sketch of proof]We study the following cases. \\
$\bullet$ Case $a_{14}\neq0,\ a_{26}\neq0\ $and$~C_{1}^{0}%
\neq1,$ $\rho_{01}=0,\ a_{14}^{2}-a_{14}^{2}C_{2}^{0}+a_{14}C_{1}^{1}%
a_{26}-a_{14}a_{26}\neq0.$\newline We use $\sigma\left(  b,2\right)  $ and
then $\tau\left(  a,2\right)  $ $\ $and $\nu\left(  a^{{\prime}},b^{\prime
}\right)  $. So,  we obtain%
\[
\psi^{1}=\psi_{14}+\frac{a_{26}}{a_{14}^{2}}\psi_{26}.%
\]
$\bullet$  Case $~a_{14}\neq0,\ a_{26}\neq0,\ C_{1}^{0}=1,$ $\rho_{01}=0.\ $We use
$\tau\left(  a,1\right)  $, then $\sigma\left(  b,2\right)  $ and $\nu\left(
a^{{\prime}},b^{\prime}\right)  $. So, we obtain%
\[
\psi^{2}=\psi_{14}+\frac{a_{26}}{a_{14}^{2}}\psi_{26}.%
\]
$\bullet$  Case $a_{14}\neq0,\ a_{26}\neq0,\ C_{1}^{0}=1,$ $C_{2}^{0}%
a_{14}=a_{26}C_{1}^{1}.\ $We use $\tau\left(  a,1\right)  $, then
$\sigma\left(  b,2\right)  $ and $\nu\left(  a^{\prime},b^{\prime
}\right)  $. So,  we obtain%
\[
\psi^{3}=\psi_{14}+\frac{a_{26}}{a_{14}^{2}}\psi_{26}.%
\]
$\bullet$  Case  $a_{14}\neq0,a_{26}\neq0\ $and$~C_{1}^{0}\neq1,$ $\rho
_{01}=0,\ a_{14}^{2}-a_{14}^{2}C_{2}^{0}+a_{14}C_{1}^{1}a_{26}-a_{14}a_{26}%
=0$. We use $\sigma\left(  b_{1},2\right)  $ and then $\sigma\left(
b_{2},3\right)  $ and $\nu\left(  a^{\prime},b^{\prime}\right) $. So, 
we obtain%
\[
\psi^{4}=\psi_{14}+\frac{a_{26}}{a_{14}^{2}}\psi_{26}.
\]
$\bullet$  Case  $a_{14}\neq0,a_{26}\neq0\ $and$~C_{1}^{0}\neq1,$ $a_{14}%
C_{2}^{0}=C_{1}^{1}a_{26},\ a_{14}^{2}-a_{14}^{2}C_{2}^{0}+a_{14}C_{1}%
^{1}a_{26}-a_{14}a_{26}=0$.\ We use $\sigma\left(  b_{1},2\right)  $ and then
$\sigma\left(  b_{2},3\right)  $ $\ $and $\nu\left(  a^{\prime}%
,b^{\prime}\right)  $. So, we obtain%
\[
\psi^{5}=\psi_{14}+\frac{a_{26}}{a_{14}^{2}}\psi_{26}.%
\]
$\bullet$  Case  $a_{14}=0,a_{26}\neq0,a_{15}=0,a_{16}\neq0.$ We use
$\sigma\left(  b,2\right)  $ and $\nu\left(  a^{\prime},b^{\prime
}\right)  $. So, we obtain%
\[
\psi^{6}=\psi_{26}.%
\]
$\bullet$  Case  $a_{14}=0,\ a_{26}\neq0,a_{15}\neq0,\ C_{1}^{1}\neq-1.\ $ We
use $\sigma\left(  b,3\right)  $ and $\nu\left(  a^{{\prime}},b^{{\prime}%
}\right)  $. So, we obtain%
\[
\psi^{7}=\psi_{15}+\psi_{26}.%
\]
$\bullet$  Case  $a_{14}=0,\ a_{26}\neq0,a_{15}\neq0,\ C_{1}^{1}=-1.\ $We
use $\tau\left(  a,1\right)  $ and $\nu\left(  a^{{\prime}},b^{{\prime}%
}\right)  $. So, we obtain%
\[
\psi^{8}=\psi_{15}+\psi_{26}.%
\]
$\bullet$  Case  $a_{14}=0,\ a_{26}=0,\ a_{15}\neq0,C_{1}^{0}\neq1.~$ We
use $\sigma\left(  b,2\right)  $ and $\nu\left(  a^{{\prime}},b^{{\prime}%
}\right)  $. So, we obtain%
\[
\psi^{9}=\psi_{15}.%
\]
$\bullet$  Case  $a_{14}=0,\ a_{26}=0,\ a_{15}\neq0,\ a_{16}\neq
0,\ C_{1}^{0}=1.\ $We use $\nu\left(  a,b\right)  $. So, we obtain
\[
\psi^{10}=\psi_{15}+\psi_{16}.%
\]
$\bullet$  Case  $a_{14}=0,\ a_{26}=0,\ a_{15}=0,$ $a_{16}\neq0.\ $We use
$\nu\left(  a,b\right)  $. So,  we obtain
\[
\psi^{11}=\psi_{16}.%
\]
$\bullet$  Case  $a_{14}\neq0,\ a_{26}=0,$ $C_{1}^{0}=1,\ C_{2}^{0}%
=1.\ $We use $\tau\left(  a,1\right)  $ then $\nu\left(  a^{{\prime}%
},b\right)  $. So, we get
\[
\psi^{12}=\psi_{14}+\psi_{16}.%
\]
Let us notice  that $\psi=\psi_{14}+\psi_{15}$ is isomorphic to $\psi=\psi_{14}+\psi_{16}%
$.\ One may use $\tau\left(  a^{\ast
},1\right)  $ and $\nu\left(  a^{\ast\ast},b^{\ast}\right)  .$ \newline%
$\bullet$  Case  $a_{14}\neq0,\ a_{26}=0,$ $C_{1}^{0}=1,\ C_{2}^{0}%
\neq1,\ \rho_{00}=0.\ $We use $\tau\left(  a,1\right)  $, then $\sigma\left(
b,3\right)  $ and $\nu\left(  a^{{\prime}},b^{{\prime}}\right)  $. So, we obtain%
\[
\psi^{13}=\psi_{14}.
\]
\newline$\bullet$  Case  $a_{14}\neq0,\ a_{26}=0,\ C_{2}^{0}\neq
1,\ C_{1}^{0}\neq1.$ We use $\sigma\left(  b,2\right)  \ $then $\sigma\left(
b^{{\prime}},3\right)  $ and $\nu\left(  a^{{\prime}},b"\right)  $. So,  we
obtain%
\[
\psi^{14}=\psi_{14}.
\]
$\bullet$  Case $a_{14}\neq0,\ a_{26}=0,\ C_{2}^{0}=1,\ C_{1}^{0}\neq
1,$\ $\rho_{00}=0,\ a_{16}\neq0.$ We use $\sigma\left(  b,2\right)  \ $then
$\nu\left(  a,b^{{\prime}}\right)  $. So, we obtain%
\[
\psi^{15}=\psi_{14}+\psi_{16}.
\]
$\bullet$  Case  $a_{14}\neq0,\ a_{26}=0,\ C_{2}^{0}=1,\ C_{1}^{0}\neq
1,$\ $\rho_{00}=0,\ a_{16}=0.\ $We use $\sigma\left(  b,2\right)  \ $then
$\nu\left(  a,b^{{\prime}}\right)  $. So, we obtain
\[
\psi^{16}=\psi_{14}.
\]

As we can see for example in the list of $\psi^{i}$ here above, we found five
times $\psi_{14}+\frac{a_{26}}{a_{14}^{2}}\psi_{26}$ after using different
basis changes.  These brackets are written in the
same form but they are not isomorphic, the difference is in the
parameters $C_{j}^{i}$. The corresponding Hom-Lie algebras are non-isomorphic.
\end{proof}

\section{Multiplicative filiform Hom-Lie algebras}

In this section, we reconsider the previous classification and provide the
matrices of the structure maps in order to obtain multiplicative filiform
Hom-Lie algebras. We do not give the classification in dimension 7 because
there is a big number of cases and the expression of the matrices is too
complicate.\newline

\noindent\textbf{Dimension 3}:

$\blacktriangleright$ $\mu_{3}^{1}:\mu_{0},$ with a structure map $\rho$ given by lower triangular
matrices as

$
\left(
\begin{array}
[c]{ccc}%
\rho_{00} & 0 & 0\\
\rho_{01} & \rho_{11} & 0\\
\rho_{02} & \rho_{12} & \rho_{00}\rho_{11}%
\end{array}
\right) 
$
 or  
 $
\left(
\begin{array}
[c]{ccc}
0 & 0 & 0\\
\rho_{01} & \rho_{11} & 0\\
\rho_{02} & \rho_{12} & 0%
\end{array}
\right) .
$
\\
\textbf{Dimension 4}:

$\blacktriangleright$ $\mu_{4}^{1}:\ \mu_{0},\ $ with a structure map $\rho$ given by

$
\left(
\begin{array}
[c]{cccc}%
\rho_{00} & 0 & 0 & 0\\
\rho_{01} & \frac{\rho_{22}}{\rho_{00}} & 0 & 0\\
\rho_{02} & \rho_{12} & \rho_{22} & 0\\
\rho_{03} & \rho_{13} & \rho_{12}\rho_{00} & \rho_{00}\rho_{22}%
\end{array}
\right) 
$
or 
$
\left(
\begin{array}
[c]{cccc}%
0 & 0 & 0 & 0\\
\rho_{01} & \rho_{11} & 0 & 0\\
\rho_{02} & \rho_{12} & 0 & 0\\
\rho_{03} & \rho_{13} & 0& 0
\end{array}
\right).
$
\\
\textbf{Dimension 5}:

$\blacktriangleright$ $\mu_{5}^{1}:\mu_{0},$ with a structure map $\rho$ given by
\[
\left(
\begin{array}
[c]{ccccc}%
\rho_{00} & 0 & 0 & 0 & 0\\
\rho_{01} & \frac{\rho_{22}}{\rho_{00}} & 0 & 0 & 0\\
\rho_{02} & \frac{\rho_{23}}{\rho_{00}} & \rho_{22} & 0 & 0\\
\rho_{03} & \rho_{13} & \rho_{23} & \rho_{00}\rho_{22} & 0\\
\rho_{04} & \rho_{14} & \rho_{00}\rho_{13} & \rho_{00}\rho_{23} & \rho
_{00}^{2}\rho_{22}%
\end{array}
\right)
\text{ or }
\left(
\begin{array}
[c]{ccccc}%
0 & 0 & 0 & 0 & 0\\
\rho_{01} & \rho_{11} & 0 & 0 & 0\\
\rho_{02} & \rho_{12} & 0 & 0 & 0\\
\rho_{03} & \rho_{13} & 0 & 0 & 0\\
\rho_{04} & \rho_{14} & 0 & 0 & 0
\end{array}
\right) .
\]

$\blacktriangleright$ $\mu_{5}^{2}:\mu_{0}+\psi_{1,4}$ with a structure map $\rho$ given by%
\[
\left(
\begin{array}
[c]{ccccc}%
0 & 0 & 0 & 0 & 0\\
\rho_{01} & \rho_{11} & 0 & 0 & 0\\
\rho_{02} & \rho_{12} & 0 & 0 & 0\\
\rho_{03} & \rho_{13} & 0 & 0 & 0\\
\rho_{04} & \rho_{14} & -\rho_{11}\rho_{02}+\rho_{12}\rho_{01} & 0 & 0
\end{array}
\right)
\]
or
\[
\left(
\begin{array}
[c]{ccccc}%
\rho_{00} & 0 & 0 & 0 & 0\\
\rho_{01} & 0 & 0 & 0 & 0\\
\rho_{02} & \rho_{12} & 0 & 0 & 0\\
\rho_{03} & \rho_{13} & \rho_{00}\rho_{12} & 0 & 0\\
\rho_{04} & \rho_{14} & \rho_{01}\rho_{12}+\rho_{00}\rho_{13} & \rho_{00}%
^{2}\rho_{12} & 0
\end{array}
\right)
\]
or
\[
\left(
\begin{array}
[c]{ccccc}%
\rho_{00} & 0 & 0 & 0 & 0\\
\rho_{01} & \rho_{00}^{2} & 0 & 0 & 0\\
\rho_{02} & \rho_{12} & \rho_{00}^{3} & 0 & 0\\
\rho_{03} & \rho_{13} & \rho_{00}\rho_{12} & \rho_{00}^{4} & 0\\
\rho_{04} & \rho_{14} & \rho_{01}\rho_{12}+\rho_{00}\rho_{13}-\rho_{00}%
^{2}\rho_{02} & \rho_{00}^{2}(\rho_{00}+\rho_{01}+\rho_{12}) & \rho_{00}^{5}%
\end{array}
\right)  .
\]
\\
\textbf{Dimension 6}

$\blacktriangleright$ $\mu_{6}^{1}:\ \mu_{0}~,\ $with a structure map $\rho$ given by a matrix of
the form%
\[
\left(
\begin{array}
[c]{cccccc}%
\rho_{00} & 0 & 0 & 0 & 0 & 0\\
\rho_{01} & \frac{\rho_{22}}{\rho_{00}} & 0 & 0 & 0 & 0\\
\rho_{02} & \frac{\rho_{23}}{\rho_{00}} & \rho_{22} & 0 & 0 & 0\\
\rho_{03} & \frac{\rho_{24}}{\rho_{00}} & \rho_{23} & \rho_{00}\rho_{2} & 0 &
0\\
\rho_{04} & \rho_{14} & \rho_{24} & \rho_{23}\rho_{00} & \rho_{22} \rho
_{00}^{2} & 0\\
\rho_{05} & \rho_{15} & \rho_{14}\rho_{00} & \rho_{24}\rho_{00} & \rho
_{23}\rho_{00}^{2} & \rho_{00}^{3}\rho_{22}%
\end{array}
\right)
\]
or
\[
\left(
\begin{array}
[c]{cccccc}%
0 & 0 & 0 & 0 & 0 & 0\\
\rho_{01} & \rho_{11} & 0 & 0 & 0 & 0\\
\rho_{02} & \rho_{12} & 0 & 0 & 0 & 0\\
\rho_{03} & \rho_{31} & 0 & 0 & 0 & 0\\
\rho_{04} & \rho_{14} & 0 & 0 & 0 & 0\\
\rho_{05} & \rho_{15} & 0 & 0 & 0 & 0
\end{array}
\right).
\]

$\blacktriangleright$ $\mu_{6}^{2}:\ \mu_{0}+\psi_{14}+\psi_{25}$ with a structure map $\rho$ given
by a matrix of the form
\[
\left(
\begin{array}
[c]{cccccc}%
0 & 0 & 0 & 0 & 0 & 0\\
\rho_{01} & \rho_{11} & 0 & 0 & 0 & 0\\
\rho_{02} & -C_{1}^{0}\rho_{11} & 0 & 0 & 0 & 0\\
\rho_{03} & \rho_{13} & 0 & 0 & 0 & 0\\
\rho_{04} & \rho_{14} & -\rho_{11}(C_{1}^{0}\rho_{01}+\rho_{02}) & 0 & 0 & 0\\
\rho_{05} & \rho_{15} & \rho_{13}(C_{1}^{0}\rho_{01}+\rho_{02}) & 0 & 0 & 0
\end{array}
\right)
\]
or
\[
\left(
\begin{array}
[c]{cccccc}%
-\frac{1}{2}(1+\sqrt{5}) & 0 & 0 & 0 & 0 & 0\\
\rho_{01} & \rho_{11} & 0 & 0 & 0 & 0\\
\rho_{02} & \frac{1}{2}(3+\sqrt{5}) & 0 & 0 & 0 & 0\\
\rho_{03} & \rho_{13} & -2-\sqrt{5} & 0 & 0 & 0\\
\rho_{04} & \rho_{14} & \frac{1}{2}((3+\sqrt{5})\rho_{01}-(1+\sqrt{5}%
)\rho_{31}) & \frac{1}{2}(7+3\sqrt{5}) & 0 & 0\\
\rho_{05} & \rho_{15} & \rho_{25} & \rho_{35} & 0 & 0
\end{array}
\right)
\]
with
\begin{align*}
\rho_{35}  &  =\frac{1}{2}((3+\sqrt{5})\rho_{13}-(4+2\sqrt{5})\rho
_{02}-(4+2\sqrt{5})(C_{1}^{0}+1)\rho_{01}),\\
\rho_{25}  &  =\frac{1}{2}(-(3+\sqrt{5})\rho_{03}-2(C_{1}^{0}\rho_{01}%
+\rho_{02})\rho_{31}-(1+\sqrt{5})\rho_{14}),
\end{align*}
or%
\[
\left(
\begin{array}
[c]{cccccc}%
-\frac{1}{2}(1-\sqrt{5}) & 0 & 0 & 0 & 0 & 0\\
\rho_{01} & \rho_{11} & 0 & 0 & 0 & 0\\
\rho_{02} & \frac{1}{2}(3-\sqrt{5}) & 0 & 0 & 0 & 0\\
\rho_{03} & \rho_{13} & -2+\sqrt{5} & 0 & 0 & 0\\
\rho_{04} & \rho_{14} & \frac{1}{2}((3-\sqrt{5})\rho_{01}-(1-\sqrt{5}%
)\rho_{31}) & \frac{1}{2}(7-3\sqrt{5}) & 0 & 0\\
\rho_{05} & \rho_{15} & \rho_{25} & \rho_{35} & 0 & 0
\end{array}
\right)
\]
with
\begin{align*}
\rho_{35}  &  =\frac{1}{2}((3-\sqrt{5})\rho_{13}-(4-2\sqrt{5})\rho
_{02}-(4-2\sqrt{5})(C_{1}^{0}+1)\rho_{01}),\\
\rho_{25}  &  =\frac{1}{2}(-(3-\sqrt{5})\rho_{03}-2(C_{1}^{0}\rho_{01}%
+\rho_{02})\rho_{31}-(1-\sqrt{5})\rho_{14}).
\end{align*}

$\blacktriangleright$ $\mu_{6}^{3}:\ \mu_{0}+\psi_{14}$, with a structure map $\rho$ given by a
matrix of the form
\[
\left(
\begin{array}
[c]{cccccc}%
0 & 0 & 0 & 0 & 0 & 0\\
\rho_{01} & 0 & 0 & 0 & 0 & 0\\
\rho_{02} & \rho_{12} & 0 & 0 & 0 & 0\\
\rho_{03} & \rho_{13} & 0 & 0 & 0 & 0\\
\rho_{04} & \rho_{14} & \rho_{01}\rho_{12} & 0 & 0 & 0\\
\rho_{05} & \rho_{15} & C_{1}^{0}\rho_{01}\rho_{13} & 0 & 0 & 0
\end{array}
\right)
\]
or
\[
\left(
\begin{array}
[c]{cccccc}%
\frac{1}{C_{1}^{0}} & 0 & 0 & 0 & 0 & 0\\
\rho_{01} & \frac{1}{(C_{1}^{0})^{2}} & 0 & 0 & 0 & 0\\
\rho_{02} & \frac{(1+C_{1}^{0})\rho_{01}}{(-1+C_{1}^{0})C_{1}^{0}} & \frac
{1}{(C_{1}^{0})^{3}} & 0 & 0 & 0\\
\rho_{03} & \rho_{13} & \frac{(1+C_{1}^{0})\rho_{01}}{(-1+C_{1}^{0})(C_{1}%
^{0})^{2}} & \frac{1}{(C_{1}^{0})^{4}} & 0 & 0\\
\rho_{04} & \rho_{14} & \rho_{24} & \frac{2\rho_{01}}{(-1+C_{1}^{0})(C_{1}%
^{0})^{2}} & \frac{1}{(C_{1}^{0})^{5}} & 0\\
\rho_{05} & \rho_{15} & \rho_{25} & \rho_{35} & \frac{(1+C_{1}^{0})\rho_{01}%
}{(-1+C_{1}^{0})(C_{1}^{0})^{2}} & \frac{1}{(C_{1}^{0})^{6}}%
\end{array}
\right)
\]
with
\begin{align*}
\rho_{35} &  =\left(  (1+C_{1}^{0})\rho_{01}^{2}+(-1+C_{1}^{0})\rho
_{24}\right)  /(-1+C_{1}^{0})(C_{1}^{0}),\\
\rho_{25} &  =\left(  -\rho_{03}+\rho_{14}+\rho_{13}\rho_{01}(C_{1}^{0}%
)^{2}\right)  /C_{1}^{0},\\
\rho_{02} &  =\frac{(1+C_{1}^{0})\rho_{01}^{2}+((C_{1}^{0})^{2}-(C_{1}%
^{0})^{3})\rho_{24}+C_{1}^{0}\rho_{01}^{2}+(C_{1}^{0})^{2}\rho_{01}^{2}%
-\rho_{13}C_{1}^{0}}{(-1+C_{1}^{0})}.
\end{align*}

$\blacktriangleright$ $\mu_{6}^{4}:\ \mu_{0}+\psi_{1,5},$ with a structure map $\rho$ given by a
matrix of the form
\[
\left(
\begin{array}
[c]{cccccc}%
\rho_{00} & 0 & 0 & 0 & 0 & 0\\
\rho_{01} & 0 & 0 & 0 & 0 & 0\\
\rho_{02} & \rho_{23} & 0 & 0 & 0 & 0\\
\rho_{03} & \frac{\rho_{24}}{\rho_{00}} & \rho_{00}\rho_{12} & 0 & 0 & 0\\
\rho_{04} & \rho_{14} & \rho_{24} & \rho_{12}\rho_{00}^{2} & 0 & 0\\
\rho_{05} & \rho_{15} & \rho_{12}\rho_{01}+\rho_{14}\rho_{00} & \rho_{24}%
\rho_{00} & \rho_{12}\rho_{00}^{3} & 0
\end{array}
\right)
\]
or
\[
\left(
\begin{array}
[c]{cccccc}%
0 & 0 & 0 & 0 & 0 & 0\\
\rho_{01} & \rho_{11} & 0 & 0 & 0 & 0\\
\rho_{02} & \rho_{12} & 0 & 0 & 0 & 0\\
\rho_{03} & \rho_{31} & 0 & 0 & 0 & 0\\
\rho_{04} & \rho_{14} & 0 & 0 & 0 & 0\\
\rho_{05} & \rho_{15} & -\rho_{11}\rho_{02}+\rho_{01}\rho_{12} & 0 & 0 & 0
\end{array}
\right)
\]
or%
\[
\left(
\begin{array}
[c]{cccccc}%
\rho_{00} & 0 & 0 & 0 & 0 & 0\\
\rho_{01} & \rho_{00}^{3} & 0 & 0 & 0 & 0\\
\rho_{02} & \rho_{12} & \rho_{00}^{4} & 0 & 0 & 0\\
\rho_{03} & \frac{\rho_{24}}{\rho_{00}} & \rho_{00}\rho_{12} & \rho_{00}^{5} &
0 & 0\\
\rho_{04} & \rho_{14} & \rho_{24} & \rho_{00}^{2}\rho_{12} & \rho_{00}^{6} &
0\\
\rho_{05} & \rho_{15} & -\rho_{00}^{3}\rho_{02}+\rho_{14}\rho_{00}+\rho
_{01}\rho_{12} & \rho_{00}(\rho_{00}^{3}\rho_{01}+\rho_{24}) & \rho_{00}%
^{3}\rho_{12} & \rho_{00}^{7}%
\end{array}
\right)
\]
or
\[
\left(
\begin{array}
[c]{cccccc}%
\rho_{00} & 0 & 0 & 0 & 0 & 0\\
\rho_{01} & \rho_{00}^{3} & 0 & 0 & 0 & 0\\
\rho_{02} & \rho_{12} & \rho_{00}^{4} & 0 & 0 & 0\\
\rho_{03} & \frac{\rho_{24}}{\rho_{00}} & \rho_{00}\rho_{12} & \rho_{00}^{5} &
0 & 0\\
\rho_{04} & \rho_{14} & 0 & \rho_{00}^{2}\rho_{12} & \rho_{00}^{6} & 0\\
\rho_{05} & \rho_{15} & -\rho_{00}^{3}\rho_{02}+\rho_{14}\rho_{00}+\rho
_{01}\rho_{12} & \rho_{00}^{4}\rho_{01} & \rho_{00}^{3}\rho_{12} & \rho
_{00}^{7}%
\end{array}
\right)
\]
or
\[
\left(
\begin{array}
[c]{cccccc}%
\rho_{00} & 0 & 0 & 0 & 0 & 0\\
\rho_{01} & 0 & 0 & 0 & 0 & 0\\
\rho_{02} & \rho_{12} & 0 & 0 & 0 & 0\\
\rho_{03} & 0 & \rho_{00}\rho_{12} & 0 & 0 & 0\\
\rho_{04} & \rho_{14} & 0 & \rho_{00}^{2}\rho_{12} & 0 & 0\\
\rho_{05} & \rho_{15} & \rho_{14}\rho_{00}+\rho_{01}\rho_{12} & 0 & \rho
_{00}^{3}\rho_{12} & 0
\end{array}
\right) .
\]

\end{document}